 \numberwithin{equation}{section}
 \numberwithin{figure}{section}
 \theoremstyle{plain}
 \newtheorem{thm}{\protect\theoremname}[section]
   \theoremstyle{plain}
   \newtheorem{conjecture}[thm]{\protect\conjecturename}
   \theoremstyle{plain}
   \newtheorem{lem}[thm]{\protect\lemmaname}
   \theoremstyle{plain}
   \newtheorem{cor}[thm]{\protect\corollaryname}
   \theoremstyle{remark}
   \newtheorem{exa}[thm]{Example}
   \theoremstyle{remark}
   \newtheorem{rem}[thm]{\protect\remarkname}
   \theoremstyle{plain}
   \newtheorem{prop}[thm]{\protect\propositionname}
   \theoremstyle{definition}
   \newtheorem{defn}[thm]{\protect\definitionname}
   \theoremstyle{remark}
   \newtheorem*{acknowledgement*}{\protect\acknowledgementname}
 \newcommand{\mcDn}{\mathcal{D}_n}
 \newcommand{\cb}{\mathcal{C}_n}  
 \newcommand{\cbe}{\mathcal{C}_{n,0}}  
 \newcommand{\cbo}{\mathcal{C}_{n,1}}  
 \newcommand{\firstcb}{\mathcal{M}_n}  
 \newcommand{\firste}{\mathcal{E}_n}  
 \newcommand{\imin}{e}
 \theoremstyle{plain}
 \newcommand{\mfp}{\mathfrak{p}}
 \newcommand{\ul}{\underline}
 \newcommand{\Fq}{\ensuremath{\mathbb{F}_q}}
 \newcommand{\N}{\ensuremath{\mathbb{N}}}
 \newcommand{\Z}{\ensuremath{\mathbb{Z}}}
 \newcommand{\bfG}{{\bf G}}
 \newcommand{\calO}{\ensuremath{\mathcal{O}}}
 \newcommand{\rk}{\ensuremath{{\rm rk}}}
 \newcommand{\Gri}{\ensuremath{\mathcal{O}}}
 \DeclareMathOperator{\Id}{Id}
 \DeclareMathOperator{\Mat}{Mat}
 \DeclareMathOperator{\coxinv}{inv}
 \DeclareMathOperator{\coxneg}{neg}
 \DeclareMathOperator{\coxnsp}{nsp}
 \renewcommand{\epsilon}{\varepsilon}
 \renewcommand{\phi}{\varphi}
   \providecommand{\acknowledgementname}{Acknowledgement}
   \providecommand{\conjecturename}{Conjecture}
   \providecommand{\corollaryname}{Corollary}
   \providecommand{\definitionname}{Definition}
   \providecommand{\lemmaname}{Lemma}
   \providecommand{\propositionname}{Proposition}
   \providecommand{\remarkname}{Remark}
 \providecommand{\theoremname}{Theorem}
\begin{document}

 \title{A new statistic on the hyperoctahedral groups}

 \author{Alexander Stasinski and Christopher Voll}
 \begin{abstract}
   We introduce a new statistic on the hyperoctahedral groups (Coxeter
   groups of type~$B$), and give a conjectural formula for its signed
   distributions over arbitrary descent classes.  The statistic is
   analogous to the classical Coxeter length function, and features a
   parity condition. For descent classes which are singletons the
   conjectured formula gives the Poincar\'e polynomials of the
   varieties of symmetric matrices of fixed rank.

   For several descent classes we prove the conjectural formula. For
   this we construct suitable ``supporting sets'' for the relevant
   generating functions. We prove cancellations on the complements of
   these supporting sets using suitably defined sign reversing
   involutions.
 \end{abstract}
 \selectlanguage{english}%

 \address{Alexander Stasinski, School of Mathematics, University of
   Southampton, University Road, Southampton SO17 1BJ, United Kingdom}
 \curraddr{Department of Mathematical Sciences, Durham University,
   South Road, Durham, DH1 3LE UK} \email{alexander.stasinski@durham.ac.uk}

 \address{Christopher Voll, School of Mathematics, University of
   Southampton, University Road, Sou\-thampton SO17 1BJ, United Kingdom}
 \curraddr{Fakult\"at f\"ur Mathematik, Universit\"at Bielefeld,
   Postfach 100131, 33501 Bielefeld, Germany}

 \email{C.Voll.98@cantab.net}


 \selectlanguage{british}%

 \keywords{Hyperoctahedral groups, signed permutation statistics, sign
   reversing involutions, descent sets, generating functions}

 \selectlanguage{english}%

 \subjclass[2000]{Primary: 05A15, 05A05; Secondary: 11M41.}

 \maketitle
 \selectlanguage{british}%

 \thispagestyle{empty}
 \section{Introduction\label{sec:Introduction}}
 There is an extensive literature concerned with identities for
 generating functions for~$S_{n}$, the symmetric group of degree~$n$.
 These are typically (multi-variable) polynomials obtained by summing
 the values of $\N_0$-valued functions, or \emph{statistics}, on the
 Coxeter group~$S_n$. Sometimes the sums are twisted with the
 non-trivial linear character of $S_n$.  Occasionally, one can prove
 more refined versions where the sums are restricted to descent
 classes. Recently, there has been an interest in finding
 generalisations, or suitable analogues, of such results for the
 hyperoctahedral groups; see for example
 \cite{Reiner-Signed-perm,AdinBrentiRoichman/01,AdinBrentiRoichman/06,FoataHan/09}.
 The \emph{hyperoctahedral group} $B_{n}$ is the group of permutations
 $w$ of the set $[\pm n]_{0}:=\{-n,\dots,n\}$ such that $w(-j)=-w(j)$
 for all~$j\in[\pm n]_{0}$.

 \selectlanguage{english}%
In the present paper we study generating
functions involving a new statistic $L$ on $B_{n}$.  For $w\in B_{n}$
we define
\begin{equation}\label{def:L}
 L(w)=\frac{1}{2}\#\{(i,j)\in[\pm n]_{0}^{2}\mid i<j,\ w(i)>w(j),\ i\not\equiv j\bmod{(2)}\}\in \{0\} \cup \N.
\end{equation}

To state our results, we introduce some further notation. Let $\N$
denote the set of positive integers, and~$\N_{0}=\{0\}\cup\N$.  For
$n\in\N$, let $[n]=\{1,2,\dots,n\}$ and $[n]_{0}=\{0\}\cup[n]$.  We
write $(\ul{n})_X$ or $(\ul{n})$ for the polynomial
$1-X^{n}\in\mathbb{Z}[X]$, where $X$ is an indeterminate. We set
$(\underline{0})=1$ and write $(\ul{n})_X!$ or $(\underline{n})!$ for
$(\underline{1})(\underline{2})\cdots(\underline{n})$. For a real
number $x$, we write $\lfloor x\rfloor$ for the largest integer less
than or equal to~$x$. Let
$I=\{i_{1},\dots,i_{l}\}_{<}\subseteq[n-1]_{0}$, that is
$i_1<\dots<i_l$. We put $i_{1}=\min(I\cup\{n\})$ and~$i_{l+1}=n$,
respectively. Let $S=\{s_{0},\dots,s_{n-1}\}$ be the set of Coxeter
generators for $B_{n}$ described in
\cite[Section~8.1]{BjoernerBrenti/05} (see also
Section~\ref{sec:preliminaries}) and let $l:B_{n}\rightarrow\N_{0}$
denote the (Coxeter) length function on $B_{n}$ with respect
to~$S$. We define the \emph{quotient} (or \emph{descent
  class}) $$B_{n}^{I}=\{w\in B_{n}\mid D(w)\subseteq
I^{\mathrm{c}}\},$$ where $D(w):=\{i\in[n-1]_0 \mid l(ws_{i})<l(w)\}$
denotes the (right) \emph{descent set} of~$w$ and $I^{\mathrm{c}}$
denotes the complement~$[n-1]_{0}\setminus I$; cf.~\cite[Sections 2.4
  and 8.1]{BjoernerBrenti/05}. Thus $w\in B_n^{I^{\mathrm{c}}}$ if and
only if $D(w)\subseteq I$.  For $n\in\N$ and
$I=\{i_1,\dots,i_l\}_<\subseteq[n-1]_0$ we define the polynomials
\begin{equation*}
  f_{n,I}(X)=\frac{(\underline{n})!}{(\underline{i_{1}})!}\prod_{r=1}^{l}\prod_{
    \sigma=1}^{ \lfloor (i_{r+1}-i_{r})/2
    \rfloor}(\underline{2\sigma})^{-1}\in\Z[X].
\label{polysf}
\end{equation*}
In \cite{StasinskiVoll/11} we stated the following conjecture:
\selectlanguage{british}%
\begin{conjecture}\cite[Conjecture~1.6]{StasinskiVoll/11}
  \label{conjecture}For $n\in\N$ and
  $I=\{i_1,\dots,i_l\}_<\subseteq[n-1]_0$,
\begin{equation}
\sum_{w\in
  B_{n}^{I^{\mathrm{c}}}}(-1)^{l(w)}X^{L(w)}=f_{n,I}(X).\label{eq:def}
\end{equation}

\end{conjecture}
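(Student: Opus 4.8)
\emph{Proof strategy.} The plan is to prove \eqref{eq:def} by the method announced in the abstract: for every pair $(n,I)$ to produce a ``supporting set'' $\mathcal{S}_{n,I}\subseteq B_n^{I^{\mathrm{c}}}$ carrying an explicit, $L$-preserving, sign-reversing involution on $B_n^{I^{\mathrm{c}}}\setminus\mathcal{S}_{n,I}$, so that after cancellation one has
\[
\sum_{w\in B_n^{I^{\mathrm{c}}}}(-1)^{l(w)}X^{L(w)}=\sum_{w\in\mathcal{S}_{n,I}}(-1)^{l(w)}X^{L(w)},
\]
and then to evaluate the right-hand side against $f_{n,I}(X)$. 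The preliminary step is to record workable formulas in window notation: the description $B_n^{I^{\mathrm{c}}}=\{w\in B_n\mid w(j)<w(j+1)\text{ for all }j\in[n-1]\setminus I,\text{ and }w(1)>0\text{ if }0\notin I\}$, so that the window of $w$ is increasing on each of the $l+1$ blocks of positions cut out by $I$ (with the first block forced positive when $0\notin I$); the formula for $l(w)$ as the sum of the number of window inversions of $w$ and of $|w(i)|$ over the positions $i$ with $w(i)<0$; and the rewriting of $L(w)$ as one half the number of inversions $(i,j)$ in $[\pm n]_0$ with $i\not\equiv j\bmod 2$. These are needed to control how $l$ and $L$ respond to the elementary moves out of which the involution is built.

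The design of the involution is guided by the following observation, which the parity clause in \eqref{def:L} is tailored to produce: swapping the two window entries in positions $i$ and $i+2$ toggles the single inversion pair $(i,i+2)$ --- whose indices have \emph{equal} parity, hence contribute nothing to $L$ --- provided the intermediate entry $w(i+1)$ does not lie strictly between $w(i)$ and $w(i+2)$ in value, in which case the move changes $l$ by exactly $\pm 1$ and leaves $L$ unchanged. Together with sign-change moves (built from $s_0$ and its conjugates, again arranged so that the affected inversion pairs have equal parity), these ``parity-neutral'' moves are the atoms of the involution; one designates $\mathcal{S}_{n,I}$ to be the windows on which no admissible move applies, and defines the involution on the complement by executing the first admissible move in a fixed priority order. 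The routine but delicate part is then to check that this rule is involutive, fixed-point free off $\mathcal{S}_{n,I}$, and compatible with the block and positivity constraints defining $B_n^{I^{\mathrm{c}}}$.

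For the evaluation I expect the product shape of $f_{n,I}(X)=\frac{(\underline{n})!}{(\underline{i_1})!}\prod_{r=1}^{l}\prod_{\sigma=1}^{\lfloor(i_{r+1}-i_r)/2\rfloor}(\underline{2\sigma})^{-1}$ to be mirrored by a factorisation of $\mathcal{S}_{n,I}$ over the blocks: the first block should contribute the ``type-$A$'' factor $\frac{(\underline{n})!}{(\underline{i_1})!}$, and each subsequent block of size $a=i_{r+1}-i_r$ a ``parity-twisted type-$B$'' factor $\prod_{\sigma=1}^{\lfloor a/2\rfloor}(\underline{2\sigma})^{-1}$ coming from the sign-change degrees of freedom inside that block. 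This factorisation also opens an induction --- removing the last block, or the pair of values $\pm n$, should reduce $(n,I)$ to a strictly smaller instance --- with two base cases: $I=\emptyset$, where both sides equal $1$, and $I=[n-1]_0$, where the claim is $\sum_{w\in B_n}(-1)^{l(w)}X^{L(w)}=\prod_{k=1}^{n}(1-X^k)=(\underline{n})!$ and the supporting set is the subgroup $(\Z/2)^n$ of sign changes: if $w$ negates exactly the positions in $T\subseteq[n]$ then $l(w)=\sum_{k\in T}(2k-1)$ and $L(w)=\sum_{k\in T}k$, so the sum collapses to $\sum_{T\subseteq[n]}(-1)^{|T|}X^{\sum_{k\in T}k}$. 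The singleton cases $I=\{i\}$, known to reproduce Poincar\'e polynomials of varieties of symmetric matrices of fixed rank, provide independent closed forms to check against.

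The main obstacle is making all of this \emph{uniform in} $I$. Any single elementary move interacts globally --- a sign change alters many inversion pairs at once, and exactly which of them satisfy $i\not\equiv j\bmod 2$ depends on the positions of the remaining entries --- so the chief difficulty is to organise the moves into one priority scheme that is simultaneously $L$-neutral, sign-reversing, fixed-point free off a \emph{tractable} supporting set, and respectful of the increasing-block-with-positive-first-block shape of $B_n^{I^{\mathrm{c}}}$. A tempting shortcut is the parabolic factorisation $w=w^{I^{\mathrm{c}}}w_{I^{\mathrm{c}}}$ with $l(w)=l(w^{I^{\mathrm{c}}})+l(w_{I^{\mathrm{c}}})$, which would let one descend from the case $I=[n-1]_0$ together with the corresponding statement for the symmetric-group factors of $\langle s_i\mid i\in I^{\mathrm{c}}\rangle$; but $L$ is not additive along this factorisation on the nose, so one would first have to establish the matching identity for the signed sums, which appears to require essentially the same involution-theoretic input.
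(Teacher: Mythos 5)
There is a fundamental mismatch here: the statement you are proving is the paper's Conjecture~\ref{conjecture}, which the paper itself does \emph{not} prove in general. The authors establish it only in the three families of cases of Theorem~\ref{thm:Main} ($I=\{0\}$, $I=[n-1]_0$, and $n$ even with $I\subseteq[n-1]_0\cap2\Z$), via Propositions~\ref{pro:Ascending}, \ref{pro:Perm-case} and \ref{pro:Even}. Your text is a strategy outline in exactly the paper's spirit (supporting sets plus $L$-preserving, sign-reversing involutions), but it is not a proof: the supporting set $\mathcal{S}_{n,I}$ is never defined for general $I$, the ``fixed priority order'' of moves is never shown to yield a fixed-point-free involution compatible with the block structure of $B_n^{I^{\mathrm{c}}}$, and the factorisation of the sum over $\mathcal{S}_{n,I}$ into the block factors of $f_{n,I}(X)$ is only ``expected''. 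These are precisely the points where all the difficulty lives. In the paper, even the partial results require the combinatorial formula $L=a+b+2c$ (Lemma~\ref{lem:L}), the chessboard reduction (Lemma~\ref{lem:cb-supp}), the odd-sandwich reduction via the involution $\vee$ (Lemma~\ref{lem:Anti-particles} and Corollary~\ref{cor:Monochrome-supp}) --- which does \emph{not} preserve the full descent set, only $D(w)\setminus\{0\}$, which is one reason the even-$I$ hypothesis is needed --- and two additivity results for $L$ (Propositions~\ref{prop:first} and \ref{prop:even second}) that hold only on the restricted set $\firste$ and under evenness assumptions. So the honest assessment is that your proposal has a genuine gap: it does not prove the conjecture, and no completion along these lines is currently known.

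Some concrete cautions on the ingredients you do specify. Your basic move (swapping the window entries in positions $i$ and $i+2$ when $w(i+1)$ is not between them in value) is not obviously $L$-neutral: $L$ is defined over pairs in $[\pm n]_0$, not just over the window, and the swap also moves pairs involving negative positions; the paper's elementary cancellation instead multiplies on the \emph{left} by $s_i$ (acting on rows/values), for which $L$-invariance follows from Lemma~\ref{lem:L} and which preserves $D(w)$ because $|j(i)-j(i+1)|\geq2$. Your base-case computations are correct as stated: for $I=\varnothing$ both sides are $1$, and for $I=[n-1]_0$, if $w$ is the sign change on $T\subseteq[n]$ then indeed $l(w)=\sum_{k\in T}(2k-1)$ and $L(w)=\sum_{k\in T}k$, giving $(\underline{n})!$; but the assertion that the full sum over $B_n$ is supported on the sign-change subgroup is itself a two-step cancellation argument (Lemmas~\ref{lem:cb-supp} and \ref{lem:diag-supp}), not a triviality. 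Finally, you correctly identify that the parabolic shortcut is blocked by the failure of additivity of $L$ along $w=w^{I}w_{I}$; the paper circumvents this only on $\firste$ and only for even $n$ and even descent type, which is exactly why its result stops at Case~\eqref{3} of Theorem~\ref{thm:Main} rather than covering all $I$.
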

For instance, if $I=[n-1]_{0}$ then $B_{n}^{I^{\mathrm{c}}}=B_{n}$, and
formula \eqref{eq:def} reads
\[
\sum_{w\in B_{n}}(-1)^{l(w)}X^{L(w)}=(\underline{n})!.
\]

\selectlanguage{english} Our main result is the following.
\begin{thm}
\label{thm:Main}
Conjecture~\ref{conjecture} holds in the following cases:
\begin{enumerate}
\item \label{1}$n\in\N$ and \foreignlanguage{english}{$I=\{0\}$,}
\item \label{2}$n\in\N$ and \foreignlanguage{english}{$I=[n-1]_{0}$,}
\selectlanguage{english}%
\item \label{3}$n\in2\N$ and $I\subseteq[n-1]_{0}\cap2\mathbb{Z}$.
\end{enumerate}
\end{thm}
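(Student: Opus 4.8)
The plan is to treat the three cases by a uniform strategy of "supporting sets plus sign-reversing involutions", as announced in the abstract, but with quite different levels of difficulty.

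First I would set up the combinatorial machinery needed to compute $L(w)$ and $l(w)$ explicitly. Using the window notation $w=[w(1),\dots,w(n)]$ for $B_n$, one has the standard formula for the Coxeter length $l(w)$ in terms of inversions and negative entries (see \cite[Section~8.1]{BjoernerBrenti/05}), and one needs an analogous expression for $L(w)$ obtained by grouping the pairs $(i,j)$ in \eqref{def:L} according to the parity of $i$ and $j$. The key observation is that because of the condition $i\not\equiv j\bmod 2$ and the antisymmetry $w(-j)=-w(j)$, the count $L(w)$ decomposes into contributions indexed by parities, and this decomposition interacts well with the descent set $D(w)$. I would record a factorisation of the generating function $\sum_{w\in B_n^{I^c}}(-1)^{l(w)}X^{L(w)}$ under the parabolic decomposition $w=w^I\cdot w_I$ with $w^I\in B_n^{I^c}$ and $w_I$ in the parabolic subgroup $B_I$; since $l$ is additive along such factorisations and $(-1)^{l}$ is multiplicative, the sum over $B_n$ (case~\ref{2}) will follow once the sum over each minimal-length coset representative set is understood. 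This reduces case~\ref{2} to case~\ref{1} together with an induction on~$n$, matching the telescoping structure of the right-hand side $f_{n,[n-1]_0}(X)=(\underline n)!$.

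For case~\ref{1}, where $I=\{0\}$, the quotient $B_n^{I^c}=B_n^{\{1,\dots,n-1\}}$ consists of the $w$ with at most one descent, located at position~$0$; combinatorially these are the signed permutations whose window is increasing, i.e.\ parametrised by the sign vector together with... essentially by subsets. Here $f_{n,\{0\}}(X)=(\underline n)!\cdot\prod_{\sigma=1}^{\lfloor n/2\rfloor}(\underline{2\sigma})^{-1}$, and I would verify directly that the signed generating function over this small explicit set equals that product, by exhibiting a sign-reversing involution on the "bad" signed permutations (those not lying in a suitably chosen supporting set) that pairs up terms of opposite $l$-parity and equal $L$-value. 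The supporting set should be chosen so that the surviving terms manifestly assemble into the claimed product; the natural guess is that survivors correspond to configurations where the negative entries occupy a prescribed "staircase" pattern, so that summing a geometric-type series in $X^{L}$ produces each factor $(\underline{2\sigma})^{-1}$ and the $(\underline n)!$ comes from the permutation part.

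Case~\ref{3}, with $n$ even and $I\subseteq[n-1]_0\cap 2\Z$, is where the real work lies, and I expect the construction of the supporting set to be the main obstacle. The hypotheses that $n$ is even and all $i_r$ are even are exactly what is needed so that every block length $i_{r+1}-i_r$ is even, hence $\lfloor(i_{r+1}-i_r)/2\rfloor=(i_{r+1}-i_r)/2$ and the product over $\sigma$ in $f_{n,I}$ becomes a clean product of the form $\prod(\underline{2\sigma})^{-1}$ with no "defect" term; this parity alignment should make the parity decomposition of $L(w)$ block-diagonal with respect to the composition of~$n$ determined by~$I$. The plan is: (i) describe $B_n^{I^c}$ via the usual shuffle/merge description of parabolic quotients of type~$B$, so that an element is a choice of how to distribute and sign the values $1,\dots,n$ into the blocks with each block increasing; (ii) on this set define a supporting set $\mathcal{S}_{n,I}$ — I would expect it to be built from the $I=\{0\}$ supporting set of case~\ref{1} within the last block, together with a "fully positive and sorted" condition on the earlier blocks, reflecting that $f_{n,I}/f_{n,\{0\}}$-type ratios are plain $q$-multinomials; (iii) define a sign-reversing involution on the complement of $\mathcal{S}_{n,I}$ — the candidate is: find the leftmost position witnessing "badness" and toggle the sign (or swap an adjacent pair) there, checking that this flips $l$-parity, preserves $L$, and preserves membership in $B_n^{I^c}$ and in the complement of the supporting set; (iv) evaluate $\sum_{w\in\mathcal{S}_{n,I}}(-1)^{l(w)}X^{L(w)}$ and match it termwise with the factorised form of $f_{n,I}(X)$. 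The delicate points will be verifying that the involution is well-defined (the "leftmost witness" is genuinely invariant under the move) and that $L$ is exactly preserved — here the parity condition $i\not\equiv j\bmod 2$ in \eqref{def:L} is what saves us, since a sign toggle at an even position changes inversions only in pairs of equal parity with the toggled index... — so the bookkeeping of which inversions are counted by $L$ must be done very carefully. I would isolate this as a technical lemma ("the canonical move preserves $L$ and reverses $l$-parity") and treat the rest as a formal consequence.
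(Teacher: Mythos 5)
Your reduction of case~\eqref{2} to case~\eqref{1} does not work as stated, and this is a genuine gap rather than a detail. Writing $w=w^{[n-1]}w_{[n-1]}$, the sign $(-1)^{l(w)}$ is indeed multiplicative because $l$ is additive along parabolic decompositions, but for the generating function to factor you also need $X^{L(w)}=X^{L(w^{[n-1]})}X^{L(w_{[n-1]})}$, i.e.\ additivity of $L$, and this fails: for $w=[1,-2]=\diag(1,-1)\in B_2$ one has $L(w)=2$ while $L(w^{[1]})=2$ and $L(w_{[1]})=1$ (this is exactly the counterexample recorded at the start of Section~\ref{subsec:first}). Worse, such elements cannot be cancelled away in case~\eqref{2}: after the chessboard reduction (Lemma~\ref{lem:cb-supp}) the sum over $B_n$ is supported on the diagonal subgroup $\mathcal{D}_n$ (Lemma~\ref{lem:diag-supp}), and $\diag(1,-1)$ is diagonal, so the surviving terms include precisely elements on which additivity of $L$ breaks down. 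The paper therefore proves case~\eqref{2} by a different route -- a second sign-reversing involution $w\mapsto s_{i+1}s_is_{i+1}w$ reducing to $\mathcal{D}_n$, followed by a one-step induction appending $\pm1$ in the corner, which produces the factor $(\ul{n})$ -- rather than by any factorisation over $w=w^{I}w_{I}$. Your case~\eqref{1} sketch is closer in spirit to the paper (chessboard-type support plus a pairing), though the paper's actual argument is an induction in steps of two on odd $n$ using the two extensions $w^{+},w^{-}$, each contributing the factor $(\ul{\widetilde{n+2}})$.

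For case~\eqref{3} the missing ingredient in your plan is the pair of additivity theorems that make the argument go through: $L(w)=L(w^{[n-1]})+L(w_{[n-1]})$ holds only on the restricted supporting set $\mathcal{E}_n$ (Proposition~\ref{prop:first}), and the second additivity $L(w)=L(w^{[\imin-1]_0})+L(w_{[\imin-1]_0})$ requires both $n$ even and even descent type (Proposition~\ref{prop:even second}); the evenness hypotheses are not just cosmetic simplifications of $f_{n,I}$ but are what forces chessboard permutations with even descent set to be block permutations $w_1\ast w_1$ (Lemma~\ref{lem:evenperm-block}) and what makes the final factorisation of the sum into the case~\eqref{1} sum times $\sum_{w\in S_n^{I^{\mathrm{c}}}}(-1)^{l(w)}X^{L(w)}=\binom{n/2}{I/2}_{X^2}$ legitimate. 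Your proposed involution ``toggle the sign at the leftmost bad position'' would in general change $L$ (a sign flip alters the statistics $a$ and $c$ in Lemma~\ref{lem:L}); the paper's involution $\vee$ instead swaps the two columns bounding the topmost \emph{odd sandwich}, and even this map only preserves $D(w)\setminus\{0\}$, not the full descent set -- which is why the supporting-set statement (Corollary~\ref{cor:Monochrome-supp}) is only available for the classes $B_n^{(I_0)^{\mathrm{c}}}$ and why the even-$I$ hypothesis is again needed to conclude. Without identifying these additivity statements and a $\vee$-type move whose effect on $L$, $l$ and the descent set is verified, step (iv) of your plan cannot be carried out.
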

The three parts of Theorem~\ref{thm:Main} are proved in
Sections~\ref{sec:The-case1}-\ref{sec:The-case3}, namely
Propositions~\ref{pro:Ascending}, \ref{pro:Perm-case}, and
\ref{pro:Even}.  Our methods are based on defining supporting sets for
the sums in question, and sign reversing involutions on their
complements which preserve their intersections with the descent
classes $B_{n}^{I^{\mathrm{c}}}$ and leave $L$ invariant. The sets
$B_{n}^{I^{\mathrm{c}}}$ in \eqref{eq:def} may thus be replaced by
their intersections with the supporting sets; the contributions of the
other elements to the sums cancel out. On the supporting sets the
statistic $L$ behaves better than on the whole of
$B_{n}^{I^{\mathrm{c}}}$: in Section~\ref{sec:case3} we establish, for
instance, two additivity results for $L$ with respect to certain
parabolic factorisations.

For one-element sets $I=\{i\}$, where $i\in[n-1]_0$, the polynomials
$f_{n,\{i\}}$ yield the Poincar\'e polynomials of the varieties of
symmetric $n\times n$ matrices over $\Fq$ of rank $n-i$. Indeed, it is
well known that, for all prime powers~$q$,
$$\#\{x\in\Mat_n(\Fq) \mid x = x^{\mathrm{t}}, \, \rk(x) = n-i\} =
q^{\binom{n+1}{2}-\binom{i+1}{2}}f_{n,\{i\}}\,(q^{-1});$$ see, for
instance, \cite[Lemma~10.3.1]{Igusa/00} and compare~\cite[Lemma~3.1
(3.4)]{StasinskiVoll/11}. It is interesting whether -- at least in
these cases -- Conjecture~\ref{conjecture} reflects cohomological
properties of the varieties of symmetric matrices of fixed rank.

The restriction of $L$ to $S_{n}$ agrees with the function $L$ defined
in~\cite[Definition~5.1]{KlopschVoll/09}. In fact, Conjecture
\ref{conjecture} may be seen as a type-$B$-analogue of
\cite[Conjecture~C]{KlopschVoll/09}. The polynomials in this
conjecture encode the numbers of non-degenerate flags in finite vector
spaces equipped with a non-degenerate quadratic form.

\selectlanguage{british} The results in the current paper are mainly
motivated by our work \cite{StasinskiVoll/11} on representation zeta
functions of nilpotent groups. In the remainder of the introduction we
describe this connection briefly. Let $G$ be a
\foreignlanguage{english}{finitely generated, torsion-free nilpotent
  group. The \emph{representation zeta function }of $G$ is the
  Dirichlet generating series
\[
\zeta_{G}(s):=\sum_{n=1}^{\infty}\widetilde{r}_{n}(G)n^{-s},
\]
where $s$ is a complex variable, and $\widetilde{r}_{n}(G)$ denotes
the number of $n$-dimensional irreducible complex representations
of~$G$, up to twisting by $1$-dimensional representations. }In
\cite[Theorem~C]{StasinskiVoll/11}, the representation zeta functions
are explicitly computed for three infinite families of groups of
nilpotency class~$2$, namely $F_{2n+\eta}(\calO)$, $G_{2n}(\calO)$,
$H_{2n}(\calO)$, where $n\in\N$, $\eta\in\{0,1\}$, and $\calO$ is the
ring of integers in an arbitrary number field. When $2n+\eta=2n=2$
these groups all coincide with the Heisenberg group of $3\times3$
upper unitriangular matrices over $\calO$. Let $\bfG$ denote any of
the group schemes $F_{2n+\eta}$, $G_{2n}$ or~$H_{2n}$. It can be shown
that $\zeta_{\bfG(\calO)}(s)$ has an Euler product 
\[
\zeta_{\bfG(\Gri)}(s)=\prod_{\mfp}\zeta_{\bfG(\Gri_{\mfp})}(s),
\]
where $\mfp$ runs through the non-zero prime ideals of $\calO$ and
$\calO_{\mfp}$ denotes the completion of $\calO$ at $\mfp$, and that
each local factor $\zeta_{\bfG(\Gri_{\mfp})}(s)$ is a rational
function in $q^{-s}$, where $q=|\calO/\mfp|$ is the residue field
cardinality at $\mfp$. In fact, these properties hold much more
generally; see~\cite[Proposition~2.2 and
Corollary~2.19]{StasinskiVoll/11}.  In \cite{StasinskiVoll/11} we
showed that the local zeta functions $\zeta_{\bfG(\Gri_{\mfp})}(s)$
are related to $q$-series and statistics on hyperoctahedral
groups. More precisely, \cite[Theorem~C]{StasinskiVoll/11} states that
there exist a family of polynomials
$(f_{\mathbf{G},I}(X))_{I\subseteq[n-1]_{0}}$ in $\Z[X]$ and integers
$(a(\mathbf{G},i))_{i\in[n-1]_{0}}$ such that, for all~$\mfp$,
\begin{equation}
  \zeta_{\mathbf{G}(\calO_{\mfp})}(s)=\sum_{I\subseteq[n-1]_{0}}f_{\mathbf{G},I}(q^{-1})\prod_{i\in I}\frac{q^{a(\mathbf{G},i)-(n-i)s}}{1-q^{a(\mathbf{G},i)-(n-i)s}}.\label{eq:zeta-additive}
\end{equation}
The polynomials $f_{\mathbf{G},I}(X)$ turn out to have a combinatorial
interpretation: in
\cite[Proposition~4.6]{StasinskiVoll/11} we showed that, for
$I\subseteq[n-1]_{0}$, 
\begin{align}
f_{F_{2n+\eta},I}(X) & =\sum_{w\in B_{n}^{I^{\mathrm{c}}}}(-1)^{\coxneg(w)}X^{2l(w)+(2\eta-1)\coxneg(w)},\label{equ:F}\\
f_{G_{2n},I}(X) & =\sum_{w\in B_{n}^{I^{\mathrm{c}}}}(-1)^{\coxneg(w)}X^{l(w)}.\label{equ:G}
\end{align}
Here $\coxneg(w):=\#\{i\in[n]\mid w(i)<0\}$ for $w\in B_n$.  Key to
the equations \eqref{equ:F} and \eqref{equ:G} are formulae for the
joint distributions of the statistics $\coxneg$ and $l$ on descent
sets of $B_n$ which were given by V.~Reiner;
cf.~\cite[Lemma~4.5]{StasinskiVoll/11}. For the group
schemes~$H_{2n}$, we know that
\[
f_{H_{2n},I}(X)=f_{n,I}(X)
\]
(cf.~\cite[Theorem~C]{StasinskiVoll/11}) and
Conjecture~\ref{conjecture} is a conjectural analogue of \eqref{equ:F}
and \eqref{equ:G}.  Combinatorial formulae of the
form~\eqref{eq:def} often have interesting consequences for zeta
functions of the form~\eqref{eq:zeta-additive}. In particular, such
formulae may facilitate proofs that the corresponding zeta function
satisfy functional equations; see \cite[Theorem~B]{KlopschVoll/09}.

\section{Signed permutations, chessboard elements and supporting sets}\label{sec:preliminaries}

Throughout, we keep the notation introduced in
Section~\ref{sec:Introduction}.  Let $W$ be a Coxeter group with
Coxeter generating set $S$.  For $I\subseteq S$, we denote by
$W_{I}=\langle s_{i}\mid i\in I\rangle$ the corresponding standard
parabolic subgroup of $W$. We also introduce the quotient
$W^{I}:=\{w\in W\mid D(w)\subseteq I^{\mathrm{c}}\}$. It is well known
that every element $w\in W$ has a unique factorisation (or ``parabolic
decomposition'')
\begin{equation}\label{equ:factorisation}
w=w^{I}w_{I},\quad\text{ where \ensuremath{w^{I}\in W^{I}}and \ensuremath{w_{I}\in W_{I}}}.
\end{equation}
The elements of $W^{I}$ are the unique representatives of the cosets
in $W/W_{I}$ of shortest length. The Coxeter length function $l$ on
$W$ is additive with respect to this factorisation, that is
\begin{equation}\label{equ:add.length}
  l(w)=l(w^I)+l(w_I);
\end{equation}
see \cite[Section 1.10]{Humphreys/90}.


Let now, specifically, $W$ be the hyperoctahedral group~$B_{n}$.  This
Coxeter group has a concrete combinatorial description, which we now
recall; cf.~\cite[Section~8.1]{BjoernerBrenti/05}. The group $B_n$ has
a faithful representation which identifies it with the group of
``signed permutation matrices'', that is, monomial $n\times n$
matrices with non-zero entries in $\{-1,1\}$, acting on standard basis
column vectors and their negatives. For $w\in B_n$ we use the ``window
notation'' $w=[a_{1},\dots,a_{n}]$ to mean that, for $i\in[n]$,
$w(i)=a_{i}\in[\pm n]_0$. In this notation, define
\begin{align*}
s_{i}&=[1,\dots,i-1,i+1,i,i+2,\dots,n] \text{ for $i\in[n-1]$ and}\\
s_{0}&=[-1,2,\dots,n].
\end{align*}
The set $S := \{s_0,s_1,\dots,s_{n-1}\}$ is a set of Coxeter generators
for $B_n$. The Coxeter length function with respect to $S$ may be
described in terms of certain statistics on~$B_n$. For $w\in B_n$,
define
\begin{align*}
\coxinv(w) & =\#\{(i,j)\in[n]^{2}\mid i<j, \; w(i)>w(j)\},\\
\coxneg(w) & =\#\{i\in[n]\mid w(i)<0\},\\
\coxnsp(w) & =\#\{\{i,j\}\subseteq[n]\mid i\neq j, \; w(i)+w(j)<0\}.
\end{align*}
It is well known (see~\cite[Proposition 8.1.1]{BjoernerBrenti/05})
that
\begin{equation}\label{equ:length}l(w)=\coxinv(w)+\coxneg(w)+\coxnsp(w).
\end{equation}
The descent set $D(w)$ of an element $w\in B_n$ may be characterised
as follows: $$D(w) = \{i\in[n-1]_0 \mid w(i) > w(i+1)\}.$$

We identify the parabolic subgroup $(B_n)_{[n-1]} = \{w\in B_n \mid
\coxneg(w)=0\}$ with the symmetric group $S_n$, with standard Coxeter
generating set $\{s_1,\dots,s_n\}$. In the combinatorial description
given above, this identifies $S_n$ with the group of $n\times n$
permutation matrices. We will freely switch between viewing elements
of $B_n$ as permutations of $[\pm n]_0$ or as signed permutation
matrices, as appropriate. Given a Coxeter group $W$ with Coxeter
generating set $S$, we usually just write $l$ for the associated
Coxeter length function. Only in case of ambiguity will we use a
subscript to indicate the relevant Coxeter group.

Let $M\in\Mat(r \times s;\Z)$. If $M$ has exactly one non-zero entry
in column $j\in[s]$ we write
\[
i_M(j):=i(j)\in[r]
\]
for the unique integer $i$ such that $M_{ij}\neq0$; informally, $i(j)$
indicates the row of $M$ which contains the non-zero entry in
column~$j$. Similarly, if $M$ has exactly one non-zero entry in row $i\in[r]$ we write
\[
j_M(i):=j(i)\in[s]
\]
for the number of the column of $M$ which contains the non-zero entry
in row~$i$. In particular, if $w\in B_n$ then
$i_w(j) = |w(j)|$ and $j_w(i)=|w^{-1}(i)|$.

We call elements of the quotient $B_{n}^{[n-1]}$ \emph{ascending}. An
element $w\in B_n$ is ascending if and only if $w(1)<w(2) < \dots <
w(n)$. Such an element is determined by its \emph{row pattern}, that
is, by the function
\begin{equation}\label{def:row.pattern}
\rho_{w}:[n]\longrightarrow\{\pm1\},\qquad\rho_{w}(i)=w_{i,j(i)},
\end{equation}
defined for all $w\in B_n$.

Let $n\in\N$ and $I=\{i_{1},\dots,i_{l}\}_{<}\subseteq[n-1]_{0}$.  Our
first step towards proving Theorem~\ref{thm:Main} is to show that the
sum in \eqref{eq:def} is supported on relatively small and manageable
subsets of $B_{n}^{I^{\mathrm{c}}}$ which we now define.
\begin{defn} \label{def:chessboard} Set
\begin{align*}
\cbe & =\{(w_{ij})\in B_{n}\mid w_{ij}\neq0\Longrightarrow i+j\equiv0\bmod{(2)}\},\\
\cbo & =\{(w_{ij})\in B_{n}\mid w_{ij}\neq0\Longrightarrow i+j\equiv1\bmod{(2)}\},\\
\cb & =\cbe\cup\cbo.
\end{align*}
\end{defn}

We call $\cb$ the group of \emph{chessboard elements} and $\cbe$ the
subgroup of \emph{even chessboard elements}. Clearly $\cb$ contains
$\cbe$ as a subgroup of index~$2$.  The name comes from imagining a
signed permutation matrix $w\in B_{n}$ printed on an $n\times n$
``chessboard'' made up from white and black squares. The element $w$
is then a chessboard element exactly if all the non-zero entries of
$w$ occupy squares of the same colour.  Chessboard elements were
introduced in \cite{KlopschVoll/09} for the symmetric
group~$S_{n}$. Definition~\ref{def:chessboard} is an extension
of~\cite[Definition~5.3]{KlopschVoll/09} to the group~$B_{n}$.

Let $w=(w_{ij})\in\cbe$ and $m_1=\lfloor\frac{n+1}{2}\rfloor$,
$m_2=\lfloor\frac{n}{2}\rfloor$.  Let $w_{1}=(w_{2a+1,2b+1})$, where
$0\leq a,b\leq m_1$, and $w_{2}=(w_{2a,2b})$, where $1\leq a,b\leq m_2$.
Then $w_{1}\in B_{m_1}$ and $w_{2}\in B_{m_2}$. This defines a group
isomorphism
\[
\sigma_{0}:\cbe\longrightarrow B_{m_1}\times B_{m_2},\quad
w\longmapsto(w_{1},w_{2}).
\]
More generally, let $w\in B_{n}$ and define
\begin{alignat*}{2}
  w_{1} &
  = 
  (w_{i(2a-1),2a-1})\in B_{m_1},\quad&&1\leq a\leq m_1,\\
  w_{2} &
  = 
  (w_{i(2a),2a})\in B_{m_2},\quad&&1\leq a\leq m_2.
\end{alignat*}
Informally, $w_1$ is the submatrix of $w$ obtained by selecting the
odd-numbered columns of $w$ together with the corresponding rows of
$w$, and $w_2$ is obtained analogously by selecting the even-numbered
columns. We obtain a map of sets
\begin{equation}\label{def:sigma}
\sigma:B_{n}\longrightarrow B_{m_1}\times B_{m_2},\quad w\longmapsto(w_{1},w_{2}),
\end{equation}
whose restriction to $\cbe$ agrees with~$\sigma_{0}$.  Given $w_1\in
B_{m_1}$ and $w_2\in B_{m_2}$ we write
$w_1*w_2:=\sigma_0^{-1}(w_1,w_2)\in\cbe$ for the unique even
chessboard element in the fibre~$\sigma^{-1}(w_1,w_2)$.

Our next aim is to give a combinatorial description of the statistic
$L$, akin to the formula \eqref{equ:length} for the Coxeter length
function on $B_n$. To this end, we introduce the following
statistics.
\begin{defn}\label{def:abc}
  Let $r,s\in\N$ and $M=(M_{ij})\in\Mat(r\times s,\Z)$. Let
  $\mathcal{S}\subseteq[s]$ denote the set of indices of columns of
  $M$ which contain a unique non-zero entry. Define
\begin{align*}
  a(M) & =\#\{ j\in \mathcal{S} \mid M_{i(j),j}=-1,\;
  j\not\equiv0\bmod{(2)}\},\\
  b(M) & = \#\{(j,j')\in\mathcal{S}^2 \mid j<j',\; i(j)>i(j'),\; j\not\equiv j' \bmod{(2)}\},\\
c(M) & = \#\{(j,j')\in\mathcal{S}^2 \mid M_{i(j'),j'}=-1,\; j<j',\; i(j)<i(j'),\; j\not\equiv j' \bmod{(2)}\}.
\end{align*}
By viewing elements of $B_n$ as signed permutation matrices, these
formulae define, in particular, functions $a,b$ and $c$ on the
hyperoctahedral groups.
\end{defn}

\begin{exa}
Consider $w=[1,-4,-3,2]\in B_4$. Then $a(w)=c(w)=1$ and~$b(w)=2$.
\end{exa}

The following characterisation of $L$ will be used throughout the
paper.
\begin{lem}
  \label{lem:L}Let $w\in B_{n}$ and
  $\sigma(w)=(w_1,w_2)$.  Then
\begin{align}
  L(w) & =a(w)+b(w)+2c(w)\label{enu:L(1)}\text{ and }\\
  L(w) & =\coxneg(w_{1})+(\coxinv+\coxnsp)(w)-(\coxinv+\coxnsp)(w_{1})-(\coxinv+\coxnsp)(w_{2})\label{enu:L(2)}\\
  & =\coxneg(w_{1}) + l(w)-l(w_{1})-l(w_{2}).\nonumber
\end{align}
\end{lem}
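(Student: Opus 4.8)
The plan is to prove the two displayed identities for $L(w)$ separately, starting from the combinatorial definition \eqref{def:L} and partitioning the index set $[\pm n]_0^2$ according to the signs and parities of the entries.

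\medskip

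\noindent\textbf{Step 1: reduce to positive indices.}
First I would observe that in \eqref{def:L} the pair $(0,j)$ contributes exactly when $w(j)<0$ and $j$ is odd, since $0<j$ automatically and $0\not\equiv j$ iff $j$ is odd. Next, using the relation $w(-j)=-w(j)$ and the symmetry $i\mapsto -i$, $j\mapsto -j$, I would pair up the remaining contributions from indices of mixed or negative sign with those from positive indices. Concretely, for $i,j\in[n]$ the inversion condition for $L$ is governed by the four pairs $(\pm i,\pm j)$; a short case analysis (as in the analogous computation for the Coxeter length in \cite[Proposition~8.1.1]{BjoernerBrenti/05}) shows that the number of ``parity-mixed inversions'' among these, divided by $2$, reorganises into a sum of three terms: one counting odd $j\in[n]$ with $w(j)<0$; one counting ordinary inversions $i<j$, $w(i)>w(j)$ with $i\not\equiv j$; and one counting pairs $i<j$ with $i\not\equiv j$, $w(i)+w(j)<0$, each such ``negative-sum'' pair contributing $1$ or $2$ depending on whether exactly one or both of $w(i),w(j)$ are negative. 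Keeping the bookkeeping of which subcase gives weight $2$ is the step most prone to off-by-one or double-counting errors, so I would do it carefully, ideally by splitting on $\operatorname{sgn}(w(i)),\operatorname{sgn}(w(j))$.

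\medskip

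\noindent\textbf{Step 2: identify the three pieces with $a,b,2c$.}
Now I would match the three terms of Step~1 with Definition~\ref{def:abc} applied to $w$ viewed as a permutation matrix (so $\mathcal{S}=[n]$, $i(j)=|w(j)|$, and $M_{i(j),j}=\rho_w(j)=\operatorname{sgn}(w(j))$). The first term, odd $j$ with $w(j)<0$, is precisely $a(w)$. For the rest, I would note that a pair $i<j$ with $i\not\equiv j$ falls into exactly one of: (i) an honest inversion $w(i)>w(j)$ with no sign condition counted with weight tracking negativity, or (ii) a non-inversion with $w(i)+w(j)<0$. Rewriting $w(i)>w(j)$ and $w(i)+w(j)<0$ in terms of $|w(i)|,|w(j)|$ and the signs, and splitting according to whether $0$, $1$, or $2$ of the values are negative, collapses the count to $b(w)+2c(w)$: the $b$-term captures the position inversions $i(j)>i(j')$ among same-parity-forbidden pairs regardless of sign, while the $c$-term (those with $M_{i(j'),j'}=-1$, $i(j)<i(j')$) is exactly the configuration that gets counted twice because it is simultaneously a ``sign-induced inversion'' and contributes to a negative sum. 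This yields \eqref{enu:L(1)}.

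\medskip

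\noindent\textbf{Step 3: pass to the splitting $\sigma(w)=(w_1,w_2)$.}
For \eqref{enu:L(2)}, I would exploit that the parity condition $i\not\equiv j$ in $L$ exactly removes the pairs in which both columns are odd (landing in $w_1$) or both are even (landing in $w_2$). Starting instead from the decomposition of $l(w)=(\coxinv+\coxnsp)(w)+\coxneg(w)$, split every pair $\{i,j\}\subseteq[n]$ and every singleton $i$ according to parity. The singletons contributing to $\coxneg(w)$ split as $\coxneg(w_1)+\coxneg(w_2)$. The pairs contributing to $\coxinv+\coxnsp$ split into three groups: both odd, both even, or mixed parity. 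The ``both odd'' group reconstructs $(\coxinv+\coxnsp)(w_1)$ and ``both even'' reconstructs $(\coxinv+\coxnsp)(w_2)$ --- here I would use that restricting to odd (resp.\ even) columns and their occupied rows is an order-preserving relabelling, so inversions and negative-sum pairs are preserved (one should check the relabelling of row indices $i(2a\pm1)\mapsto a$ is monotone, which it is since $|w|$ is a permutation). What remains, the mixed-parity pairs of $\coxinv+\coxnsp$ together with the mixed-parity singletons minus $\coxneg(w_2)$, must then be shown equal to the right-hand side of \eqref{enu:L(2)}; comparing with the quantity computed in Steps~1--2 gives $L(w)=a(w)+b(w)+2c(w)=$ (mixed-parity part of $\coxinv+\coxnsp$) $+\,\coxneg(w_1)$, which is exactly \eqref{enu:L(2)} after moving terms, and then \eqref{equ:length} turns it into the final line. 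The main obstacle throughout is purely the careful sign-and-parity casework in Steps~1--2; once the weight-$2$ phenomenon for the $c$-term is correctly pinned down, the rest is bookkeeping.
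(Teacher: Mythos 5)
Your overall strategy for \eqref{enu:L(1)} --- halve the defining sum by the symmetry $(i,j)\mapsto(-j,-i)$, reduce to window positions, and match the pieces with $a$, $b$, $2c$ --- is essentially the paper's, and your Step~3 route to \eqref{enu:L(2)} could be made to work. But the explicit bookkeeping you commit to in Step~1 is false, and it is precisely the step everything else is built on. For $0<i<j\le n$ with $i\not\equiv j\bmod{(2)}$, the two relevant representative pairs attached to the columns $i,j$ are $(i,j)$ and $(-i,j)$; the first is counted in $L$ exactly when $w(i)>w(j)$, the second exactly when $-w(i)>w(j)$, i.e.\ when $w(i)+w(j)<0$. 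Hence each mixed-parity negative-sum pair contributes exactly $1$ on top of the inversion count, \emph{independently} of how many of $w(i),w(j)$ are negative: the correct reduction is
$L(w)=\#\{j\ \mathrm{odd}\mid w(j)<0\}+\#\{(i,j)\in[n]^2\mid i<j,\ i\not\equiv j,\ w(i)>w(j)\}+\#\{(i,j)\in[n]^2\mid i<j,\ i\not\equiv j,\ w(i)+w(j)<0\}$.
Your rule ``each negative-sum pair contributes $1$ or $2$ according to whether one or both of $w(i),w(j)$ are negative'' fails already on the paper's example $w=[1,-4,-3,2]$: it gives $1+1+(1+2+1)=6$, whereas $L(w)=5$. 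The weight-$2$ configurations are in fact the pairs with $|w(i)|<|w(j)|$ and $w(j)<0$ (simultaneously an inversion and a negative-sum pair), i.e.\ the $c$-pairs --- which is what you say, correctly, in Step~2; but Step~2 is presented as a re-organisation of Step~1's three terms, so the claimed matching with $b(w)+2c(w)$ does not go through as written.

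Once Step~1 is repaired as above, your plan is viable: the case split on $|w(i)|$ versus $|w(j)|$ and the sign of $w(j)$ shows that the mixed-parity part of $\coxinv+\coxnsp$ equals $b(w)+2c(w)$, giving \eqref{enu:L(1)}, and since $\#\{j\ \mathrm{odd}\mid w(j)<0\}=\coxneg(w_1)$, identity \eqref{enu:L(2)} follows from your parity splitting of $\coxinv+\coxnsp$ --- provided you actually carry out the relabelling check you only mention in passing, namely that same-parity column pairs contribute exactly $(\coxinv+\coxnsp)(w_1)$ and $(\coxinv+\coxnsp)(w_2)$ (here you work with the window values $w(i)$, so monotonicity and preservation of signs under the passage to $w_1,w_2$ must be verified). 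This last part is a genuinely different route from the paper, which introduces the unrestricted counts $\bar b,\bar c$, proves $\bar b+2\bar c=\coxinv+\coxnsp$ by noting both sides depend only on the $2\times2$ submatrices determined by column pairs and checking on $B_2$, and then subtracts the $w_1,w_2$ contributions; your version trades that reduction for the relabelling verification. As it stands, however, the proposal contains a concrete false identity at its core and is not yet a proof.
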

\begin{proof}
  To prove \eqref{enu:L(1)}, set $$\mathfrak{N}_n = \{(i,j) \in [\pm
    n]_0^2 \mid |i| < |j|,\; i<j,\; i\not\equiv j \bmod{(2)}\}$$
  and $$ \mathfrak{M}_n(w) = \{(i,j) \in \mathfrak{N}_n \mid
  w(i)>w(j)\}.$$ By definition, $L(w) = \# \mathfrak{M}_n(w)$. Let
  $(i,j)\in\mathfrak{N}_n$.  If $|w(i)|>|w(j)|$ then exactly one of
  $(i,j)$ and $(-i,j)$ are in $\mathfrak{M}_n(w)$. (Note that in this
  case~$i \neq 0$.) Thus $b(w)=\#\{(i,j)\in\mathfrak{M}_n(w) \mid
  |w(i)|>|w(j)|\}$.  If $|w(i)|<|w(j)|$ we distinguish further by the
  sign of $w(j)$. If $w(j)<0$ then both $(i,j)$ and $(-i,j)$ are in
  $\mathfrak{M}_n(w)$. Note that $(i,j)=(-i,j)$ if and only if $i=0$,
  in which case $j$ is odd. If $w(j)>0$ then neither $(i,j)$ nor
  $(-i,j)$ are in $\mathfrak{M}_n(w)$. Thus
  $a(w)+2c(w)=\#\{(i,j)\in\mathfrak{M}_n(w) \mid |w(i)|<|w(j)|\}$ and
  $L(w) = a(w) + b(w) + 2c(w)$ as claimed.

We now prove \eqref{enu:L(2)}. First, it is clear that
$a(w)=\coxneg(w_{1})$. Omitting the parity conditions in the
definitions of the functions $b$ and $c$ given in
Definition~\ref{def:abc} yields
\begin{align*}
\bar{b}(M) & := \#\{(j,j')\in\mathcal{S}^2 \mid j<j',\; i(j)>i(j')\},\\
\bar{c}(M) & := \#\{(j,j')\in\mathcal{S}^2 \mid M_{i(j'),j'}=-1,\; j<j',\; i(j)<i(j')\}.
\end{align*}
We claim that $\bar{b}+2\bar{c}=\coxinv+\coxnsp$ on~$B_n$. To show
this, we make the following observations.  The function
$\bar{b}+2\bar{c}$ counts certain column pairs $(j,j')$, depending
only on the $2\times2$ submatrices determined by~$(j,j')$. The same is
true for the function~$\coxinv+\coxnsp$.  To establish the claim thus
amounts to checking it on~$B_{2}$. A simple calculation confirms it
there, and thus $\bar{b}+2\bar{c}=\coxinv+\coxnsp$ on~$B_n$. We
further observe that
$$
b(w) =\bar{b}(w)-\bar{b}(w_{1})-\bar{b}(w_{2})\quad \text{ and }\quad
c(w) =\bar{c}(w)-\bar{c}(w_{1})-\bar{c}(w_{2}).$$
Using \eqref{enu:L(1)} this yields
\begin{align*}
L(w) & =a(w)+b(w)+2c(w)\\ &
=a(w)+\bar{b}(w)-\bar{b}(w_{1})-\bar{b}(w_{2})+2(\bar{c}(w)-\bar{c}(w_{1})-\bar{c}(w_{2}))\\ &
=\coxneg(w_{1})+(\coxinv+\coxnsp)(w)-(\coxinv+\coxnsp)(w_{1})-(\coxinv+\coxnsp)(w_{2}).
\end{align*}
Using, finally, the facts that $l=\coxinv+\coxnsp+\coxneg$ (see
\eqref{equ:length}) and $\coxneg(w)=\coxneg(w_{1})+\coxneg(w_{2})$, we
obtain the second equality in \eqref{enu:L(2)}. \end{proof}

The unique longest element of $B_{n}$ is $w_{0}=[-1,-2,\dots,-n]$, of
length $l(w_{0})=n^{2}$. It is well known that the Coxeter length
function $l$ on $B_n$ is well-behaved under multiplication
by~$w_0$. More precisely, the equalities $$l(ww_0) = l(w_0w) = l(w_0)
- l(w)$$ hold for all $w\in B_n$;
cf.~\cite[Section~1.8]{Humphreys/90}. At least in this respect the
statistic $L$ behaves analogously.
\begin{cor}
  Let $w_{0}\in B_{n}$ be the longest element. Then, for all~$w\in
  B_{n}$, we have
\[
L(ww_{0})=L(w_{0}w)=L(w_{0})-L(w).
\]
Moreover, the trivial element in $B_n$ is the only element $w\in B_n$
with $L(w)=0$, and hence $w_{0}$ is the unique element in $B_{n}$ on
which $L$ attains its maximum~$\binom{n+1}{2}$.\end{cor}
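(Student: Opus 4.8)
The plan is to reduce both parts of the statement to a single elementary counting identity. The window notation gives $w_0(k)=-k$ for all $k\in[\pm n]_0$, so $w_0^2=e$, and, using $w(-i)=-w(i)$, one checks that $(ww_0)(i)=-w(i)=(w_0w)(i)$ for every $w\in B_n$ and every $i$; thus $w_0$ is central, $L(ww_0)=L(w_0w)$ holds automatically, and it suffices to prove $L(w_0w)=L(w_0)-L(w)$. Put $C_n=\{(i,j)\in[\pm n]_0^2\mid i<j,\ i\not\equiv j\bmod 2\}$ and, for $w\in B_n$, $A_w=\{(i,j)\in C_n\mid w(i)>w(j)\}$, so that definition~\eqref{def:L} reads $L(w)=\tfrac12\#A_w$. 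Since $(w_0w)(k)=-w(k)$, we have $A_{w_0w}=\{(i,j)\in C_n\mid w(i)<w(j)\}$, and as $w$ is injective, $C_n$ is the disjoint union of $A_w$ and $A_{w_0w}$; hence $L(w)+L(w_0w)=\tfrac12\#C_n$ for every $w\in B_n$. Taking $w=e$, where $A_e=\emptyset$ and so $L(e)=0$, gives $\tfrac12\#C_n=L(w_0)$, and therefore $L(w_0w)=L(w_0)-L(w)$, which with $ww_0=w_0w$ yields the displayed equalities.

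Next I would compute $L(w_0)=\tfrac12\#C_n$. The set $[\pm n]_0$ contains $2\lfloor n/2\rfloor+1$ even and $2\lceil n/2\rceil$ odd integers, and $\#C_n$ is their product, since $C_n$ is in natural bijection with the set of unordered opposite-parity pairs in $[\pm n]_0$. A short computation splitting on the parity of $n$ then gives $\tfrac12\#C_n=\binom{n+1}{2}$; this is the only genuine calculation in the argument. (Alternatively, since $w_0$ is the signed permutation matrix $-I_n$, Lemma~\ref{lem:L} gives $L(w_0)=a(w_0)+2c(w_0)$ with $a(w_0)=\#\{j\in[n]\mid j\text{ odd}\}$ and $c(w_0)=\#\{(j,j')\in[n]^2\mid j<j',\ j\not\equiv j'\bmod 2\}$, which evaluates to the same.)

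For the assertion on the fibre over $0$, suppose $L(w)=0$, i.e.\ $A_w=\emptyset$. For each $i\in\{0,1,\dots,n-1\}$ the pair $(i,i+1)$ lies in $C_n$, so $w(i)>w(i+1)$ cannot hold, and injectivity of $w$ forces $w(i)<w(i+1)$; hence $0=w(0)<w(1)<\dots<w(n)$. These are $n$ distinct positive elements of $[\pm n]_0$, so $\{w(1),\dots,w(n)\}=[n]$, and strict monotonicity gives $w(i)=i$ for all $i$, i.e.\ $w=e$. Finally, for arbitrary $w\in B_n$ one has $L(w)=L(w_0)-L(w_0w)\le L(w_0)=\binom{n+1}{2}$ because $L\ge 0$, with equality exactly when $L(w_0w)=0$, i.e.\ when $w_0w=e$, i.e.\ when $w=w_0$; this establishes both the value of the maximum and its uniqueness.

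No step here poses a real difficulty. The only points requiring attention are the handling of parities and of the exceptional index $0$ (recall that $w(0)=0$ for every $w\in B_n$, and that $(0,j)\in C_n$ precisely when $j\in[n]$ is odd), and the straightforward case-split on the parity of $n$ when evaluating $\#C_n$.
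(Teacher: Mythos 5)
Your argument is correct, and it takes a genuinely different route from the paper. The paper derives the identity $L(ww_0)=L(w_0)-L(w)$ from its Lemma~\ref{lem:L}\eqref{enu:L(2)}, i.e.\ from the expression of $L$ in terms of $\coxneg(w_1)$ and $\coxinv+\coxnsp$ on $w$, $w_1$, $w_2$, combined with the classical behaviour of the Coxeter length under multiplication by $w_0$ and the identity $m_1(2m_2+1)=\binom{n+1}{2}$; the uniqueness of the fibre over $0$ is then read off from the statistics $a,b,c$ of Lemma~\ref{lem:L}\eqref{enu:L(1)} ($b=0$ and $c=0$ force $i(j+1)=i(j)+1$, leaving only $1$ and $s_0$, and $a(s_0)=1$). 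You instead work directly with the definition \eqref{def:L}: since $w_0=-\Id$ is central and $w$ is injective, the opposite-parity inversion set of $-w$ is the exact complement inside $C_n$ of that of $w$, giving $L(w)+L(w_0w)=\tfrac12\#C_n=L(w_0)$ at once, and the parity count of $[\pm n]_0$ (or, as you note, the $a,c$ count for $-\Id$, with $b(w_0)=0$) yields $L(w_0)=\binom{n+1}{2}$; for the fibre over $0$ you use only the consecutive pairs $(i,i+1)\in C_n$ together with $w(0)=0$ to force $w=e$, and the maximum statement follows formally. Your complementation argument is the analogue of the standard proof that Coxeter length satisfies $l(w_0w)=l(w_0)-l(w)$, and it is more elementary and self-contained: it bypasses Lemma~\ref{lem:L} and \eqref{equ:length} entirely. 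What the paper's route buys is a demonstration of the machinery (the $a,b,c$ decomposition and the $\sigma$-factorisation) that is used repeatedly later in the paper, whereas your route buys brevity and independence from that machinery. All the delicate points (the index $0$, parity of $n$, injectivity ensuring the dichotomy $w(i)>w(j)$ or $w(i)<w(j)$ on $C_n$) are handled correctly.
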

\begin{proof}
  Let $w\in B_n$. Note that $w_{0}=-\Id_{n}$, where $\Id_{n}$ is the
  $n\times n$ identity matrix, so $ww_0=w_0w=-w$.  Obviously
  $\coxneg(w_0)=n$, and so
  $\coxneg(ww_{0})=n-\coxneg(w)=\coxneg(w_{0})-\coxneg(w)$.  Since
  $l=\coxinv+\coxnsp+\coxneg$, we thus have
\begin{equation}
(\coxinv+\coxnsp)(-w)=(\coxinv+\coxnsp)(w_{0})-(\coxinv+\coxnsp)(w)=n^{2}-n-(\coxinv+\coxnsp)(w).\label{eq:inv-nsp}
\end{equation}
Let $\sigma(w)=(w_1,w_2)\in B_{m_1} \times B_{m_2}$, where
$m_1=\lfloor\frac{n+1}{2}\rfloor$ and~$m_2=\lfloor\frac{n}{2}\rfloor$.
Using Lemma~\ref{lem:L}~\eqref{enu:L(2)} together with
\eqref{eq:inv-nsp} and the fact that $n=m_1+m_2$, we then obtain
\begin{align*}
L(ww_{0}) & =L(-w)\\
&=\coxneg(-w_{1})+(\coxinv+\coxnsp)(-w)-(\coxinv+\coxnsp)(-w_{1})-(\coxinv+\coxnsp)(-w_{2})\\
 & =m_1-\coxneg(w_{1})+n^{2}-n-(\coxinv+\coxnsp)(w)-(m_1^{2}-m_1-(\coxinv+\coxnsp)(w_{1}))\\
 & \quad-(m_2^{2}-m_2-(\coxinv+\coxnsp)(w_{2}))\\
 & =m_1+n^{2}-n-m_1^{2}+m_1-m_2^{2}+m_2-L(w) =m_1(2m_2+1)-L(w).
\end{align*}
Using Lemma~\ref{lem:L}~\eqref{enu:L(1)} it is easy to see that
$L(w_{0})=\binom{n+1}{2}$. Clearly~$m_1(2m_2+1)=\binom{n+1}{2}$, so
$L(ww_{0})=L(w_{0}w)=L(w_{0})-L(w)$, as asserted. This immediately
implies that $L(w_{0})=\binom{n+1}{2}$ is the maximal value attained
by~$L$. To see that $w_{0}$ is the unique element on which $L$ attains
its maximum, it suffices to show that $L(w)=0$ implies~$w=1$. Assume
thus that $L(w)=0$, for some $w\in B_{n}$. By
Lemma~\ref{lem:L}~\eqref{enu:L(1)}, this implies that
$a(w)=b(w)=c(w)=0$. Let $j\in[n-1]$. Then $b(w)=0$ implies that
$i(j)<i(j+1)$, and $c(w)=0$ then implies that $i(j+1)=i(j)+1$.  Since
this is true for all $j\in[n-1]$, we have either $w=1$ or
$w=s_{0}$. But $a(s_{0})=1$, so we must have $w=1$.\end{proof}

As mentioned previously, our approach to proving
Conjecture~\ref{conjecture} is to show that the sum in~\eqref{eq:def}
is supported on certain proper subsets
of~$B_{n}^{I^{\mathrm{c}}}$. The following is our first result in this
direction, and says that the sum is supported on the even chessboard
elements in~$B_{n}^{I^{\mathrm{c}}}$. Key to its proof is the
construction of a suitable sign reversing involution. For any subset
$X\subseteq B_{n}$ and $I\subseteq[n-1]_{0}$, we set
$$X^{I}:=X\cap
B_{n}^{I}.$$

\begin{lem}
  \label{lem:cb-supp}For $n\in\N$ and $I\subseteq[n-1]_{0}$,
\[
\sum_{w\in
  B_{n}^{I^{\mathrm{c}}}}(-1)^{l(w)}X^{L(w)}=\sum_{w\in\cbe^{I^{\mathrm{c}}}}(-1)^{l(w)}X^{L(w)}.
\]
\end{lem}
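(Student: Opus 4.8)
The plan is to construct an explicit sign-reversing involution $\iota$ on the complement $B_n^{I^{\mathrm{c}}} \setminus \cbe^{I^{\mathrm{c}}}$ which preserves $L$, flips the parity of the Coxeter length $l$, and preserves the descent-class constraint $D(w) \subseteq I$. Given such an involution, the terms of the sum indexed by $w$ and $\iota(w)$ cancel in pairs, leaving only the sum over $\cbe^{I^{\mathrm{c}}}$, which proves the lemma.

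To define $\iota$, I would proceed column by column. Write $w = [a_1, \dots, a_n]$ in window notation and let $j$ be the smallest index in $[n]$ such that the non-zero entry in column $j$ lies in a row $i(j) = |a_j|$ of the ``wrong colour'', i.e. $i(j) + j \not\equiv 0 \bmod 2$. (If no such $j$ exists then $w \in \cbe$, since the chessboard constraint forces a single colour; note also that this $j$ cannot be $1$, as column $1$ has $i(1) = 1$.) The natural move is to swap the entries in rows $i(j)-1$ and $i(j)$ — equivalently, to left-multiply by the transposition-type generator that interchanges these two rows. Because $i(j) \geq 2$ and the entry in row $i(j)$ sits in column $j$, this operation changes $i(j)$ to $i(j)-1$, restoring the correct parity in column $j$, while moving whatever occupied row $i(j)-1$ into row $i(j)$. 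One checks that applying the same rule to $\iota(w)$ recovers $w$: the minimality of $j$ guarantees that columns $1, \dots, j-1$ are already correctly coloured and are untouched, so $j$ remains the first offending column for $\iota(w)$, and the rows involved are again $i(j)-1, i(j)$. Thus $\iota$ is an involution on $B_n^{I^{\mathrm{c}}} \setminus \cbe^{I^{\mathrm{c}}}$ provided we verify it lands back in that set.

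The three invariance checks are where the real work lies. For the length parity: left-multiplication by the reflection swapping rows $i(j)-1$ and $i(j)$ changes $l$ by an odd number (this is a reflection, and for signed permutation matrices one computes the change in $\coxinv + \coxneg + \coxnsp$ explicitly via \eqref{equ:length} — swapping two adjacent rows changes the relevant inversion-type counts by an odd total), so $(-1)^{l}$ flips sign. For $L$-invariance: using the characterisation $L(w) = a(w) + b(w) + 2c(w)$ from Lemma~\ref{lem:L}, I would show that swapping rows $i(j)-1$ and $i(j)$, where column $j$ is the unique column whose non-zero entry is in one of these two rows among all columns of the wrong colour up to $j$, leaves $a + b + 2c$ unchanged; the key point is that a column $j'$ contributes to $b$ or $c$ through the pair $(i(j'), j')$ with a parity condition $j \not\equiv j' \bmod 2$, and the swap affects rows $i(j)-1, i(j)$ which have opposite parities, so contributions rearrange but cancel. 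For the descent condition: I would argue that the swap does not create a new descent outside $I$, using the combinatorial description $D(w) = \{i \mid w(i) > w(i+1)\}$ — here one likely needs that the two rows being swapped, $i(j)-1$ and $i(j)$, hold values in columns that are suitably placed, or one modifies the move slightly to ensure descent-compatibility.

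The main obstacle is precisely this last point: ensuring the involution respects $D(w) \subseteq I$. A naive row swap can easily introduce a descent at some $i \notin I$. The resolution is likely to interleave the row swap with a compensating column adjustment within the parabolic subgroup $(B_n)_{I}$, or to choose the offending column $j$ and the swapped rows more carefully so that the affected positions lie within a single block of consecutive integers $\{i_r+1, \dots, i_{r+1}\}$ cut out by $I$ — on such a block the descent constraint is vacuous between interior positions. I would expect the clean statement to be: perform the swap on rows $i(j)-1$ and $i(j)$ but track that these correspond, in the window, to positions lying strictly between consecutive elements of $I \cup \{0,n\}$, which forces the swap to be descent-neutral. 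Verifying $L$-invariance via the $a + b + 2c$ formula and this descent-compatibility are the two technical cores; the length-parity flip is routine.
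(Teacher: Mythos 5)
Your high-level strategy (a sign-reversing, $L$-preserving, descent-preserving involution on $B_n^{I^{\mathrm{c}}}\setminus\cbe$) is exactly the right one, but the specific map you construct does not work, and the two points you defer are precisely where it breaks. First, it is not an involution. After swapping rows $i(j)-1$ and $i(j)$, column $j$ becomes correctly coloured (its entry now sits in row $i(j)-1$), so your claim that ``$j$ remains the first offending column for $\iota(w)$'' is self-contradictory; what can happen instead is that the first offending column of $\iota(w)$ is a different column whose entry lies in a different pair of rows. Concretely, take $w=[4,5,3,2,1]\in S_5\subset B_5$: the first wrong-coloured column is $j=1$ with $i(1)=4$, so your move swaps rows $3,4$ and gives $\iota(w)=[3,5,4,2,1]$; but the first wrong-coloured column of $\iota(w)$ is $j=2$ with $i(2)=5$, so applying the rule again swaps rows $4,5$ and yields $[3,4,5,2,1]\neq w$. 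Second, the map need not stay in the complement of $\cbe$, nor preserve $L$: for $w=[2,1]$ one gets $\iota(w)=[1,2]\in\cbe$ and $L$ drops from $1$ to $0$. The reason is that $L=a+b+2c$ is insensitive only to reordering rows whose non-zero entries lie in columns of the \emph{same} parity; your swap pairs column $j$ with whatever column occupies row $i(j)-1$, and when that column is also wrong-coloured the two columns have opposite parity and $b+2c$ changes. Third, the move is undefined when $i(j)=1$ (e.g.\ $w=[3,1,2]$, where the first wrong column is $j=2$ with $i(2)=1$), and the parenthetical claim that $j\neq 1$ because ``column $1$ has $i(1)=1$'' is simply false ($w=[2,1]$ has $i(1)=2$). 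Finally, descent preservation is left as a hope (``likely'', ``I would expect''), not an argument.

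The paper's involution differs in a way that fixes all of this simultaneously: instead of locating the first wrong-coloured \emph{column}, locate the minimal $i\in[n-1]_0$ such that rows $i$ and $i+1$ carry their non-zero entries in columns of the \emph{same} parity, i.e.\ $j(i)\equiv j(i+1)\bmod(2)$ (with the convention that the case $i=0$ corresponds to $j(1)$ even), and set $w^*=s_iw$; such an $i$ exists precisely when $w\notin\cbe$. Because the two affected columns have equal parity, the pair is excluded by the parity condition in $b$ and $c$, so $L(w^*)=L(w)$ with no case analysis; because $|j(i)-j(i+1)|\geq 2$ and the swapped window values are the consecutive $\pm i,\pm(i+1)$ (or a sign change of $\pm1$ in a column $\geq 2$ when $i=0$), one gets the exact equality $D(w^*)=D(w)$, so the whole descent-class issue you worried about disappears; $l(w^*)=l(w)\pm 1$ since $s_i$ is a Coxeter generator; and the same minimal $i$ is selected for $w^*$, so $(w^*)^*=w$ and the map never leaves $B_n\setminus\cbe$. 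If you want to salvage your write-up, you should replace your column-based selection by this row-pair selection (including the $s_0$ case); as it stands, the cancellation argument does not go through.
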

\begin{proof}
  Let $w=(w_{ij})\in B_{n}\setminus\cbe$. Thus there exists
  $i\in[n-1]_0$ such that $ j(i)\equiv j(i+1)\bmod{(2)}.  $ Let $i$ be
  minimal with this property and set
  $w^{*}:=s_{i}w$. Lemma~\ref{lem:L}~\eqref{enu:L(1)}
  implies that $L(w)=L(w^{*})$. Moreover, $l(w)=l(w^{*})\pm1$. Since
  $|j(i)-j(i+1)|\geq2$, we have $D(w)=D(w^{*})$. Note that $(w^*)^*=w$
  and $w \neq w^*$. Every element $w\in
  B_{n}^{I^{\mathrm{c}}}\setminus\cbe$ may thus be paired up with a
  unique, distinct element $w^{*}\in B_{n}^{I^{\mathrm{c}}}\setminus\cbe$, such
  that $(-1)^{l(w)}L(w)+(-1)^{l(w^*)}L(w^{*})=0$. This implies the
  assertion.
\end{proof}

\section{The case $I=\{0\}$\label{sec:The-case1}}

In this section we prove Conjecture~\ref{conjecture} in the case where
$I=\{0\}$, that is Case~\eqref{1} of Theorem~\ref{thm:Main}. In this
case, the sum in~\eqref{eq:def} runs over~$B_{n}^{[n-1]}$, that is,
ascending matrices. Let $\tilde{n}:=2[\frac{n-1}{2}]+1$ be the largest
odd integer less than or equal to $n$. Then, by definition,
\[
f_{n,\{0\}}(X)=\frac{(\underline{n})!}{\prod_{\sigma=1}^{\lfloor
    n/2\rfloor}(\underline{2\sigma})}=(\underline{1})(\underline{3})\cdots(\underline{\tilde{n}}).
\]
\begin{prop}
\label{pro:Ascending}Conjecture~\ref{conjecture} holds for
$I=\{0\}$, that is
$$\sum_{w\in B_n^{[n-1]}} (-1)^{l(w)} X^{L(w)} = (\ul{1})(\ul{3})\dots(\ul{\tilde{n}}).$$
\end{prop}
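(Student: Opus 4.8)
The plan is to reduce the sum over ascending elements to a sum over \emph{even chessboard} ascending elements via Lemma~\ref{lem:cb-supp}, then to parametrise the latter explicitly and evaluate the resulting generating function by induction on~$n$. By Lemma~\ref{lem:cb-supp} it suffices to compute $\sum_{w\in\cbe^{[n-1]}}(-1)^{l(w)}X^{L(w)}$. An element $w\in\cbe^{[n-1]}$ is an ascending even chessboard element, so under the isomorphism $\sigma_0\colon\cbe\to B_{m_1}\times B_{m_2}$ it corresponds to a pair $(w_1,w_2)$ with $w_1\in B_{m_1}$, $w_2\in B_{m_2}$. The key point is that the ascending (no-descent) condition $w(1)<w(2)<\dots<w(n)$ decouples over the two colour classes: since consecutive columns of $w$ lie in opposite colour classes and the nonzero entries of $w$ sit in rows of fixed parity determined by the colour, the inequalities $w(i)<w(i+1)$ impose essentially independent monotonicity conditions on $w_1$ and $w_2$, together with an interlacing constraint relating the actual \emph{values} taken. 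First I would make this precise: describe which pairs $(w_1,w_2)$ arise, i.e.\ give a bijection between $\cbe^{[n-1]}$ and a combinatorially transparent set (I expect: pairs consisting of an ascending $w_1\in B_{m_1}$, an ascending $w_2\in B_{m_2}$, together with a choice of how their value sets interleave in $[\pm n]_0$).

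Next I would bring in Lemma~\ref{lem:L}. On ascending elements the formula \eqref{enu:L(2)} gives $L(w)=\coxneg(w_1)+l(w)-l(w_1)-l(w_2)$, and by \eqref{equ:add.length} applied to the parabolic factorisation, together with the structure of the interleaving, I would aim to show $l(w)-l(w_1)-l(w_2)$ equals a controlled statistic of the interleaving pattern (a ``shuffle'' or inversion-type count), and that $(-1)^{l(w)}=(-1)^{l(w_1)+l(w_2)+(\text{that count})}$. The upshot should be a factorisation of the generating function as a product over the two colour classes times a contribution from the interleaving, the latter of which I expect to telescope or to be summable by a known $q$-binomial-type identity. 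Since $w_1$ ranges over \emph{all} of $B_{m_1}$ when $w$ ranges over ascending even chessboard elements with prescribed value set on the odd columns — not just over ascending $w_1$ — I would then invoke Case~\eqref{2} of Theorem~\ref{thm:Main} (Proposition~\ref{pro:Perm-case}), $\sum_{v\in B_m}(-1)^{l(v)}X^{L(v)}=(\underline{m})!$, wait: more carefully, the relevant sub-sum over $w_1$ with $\coxneg$-twist will be $\sum_{v\in B_{m_1}}(-1)^{l(v)}X^{\coxneg(v)}X^{L(v)}$ or similar, so I may instead need a companion evaluation; the cleanest route is probably to set up the induction so that the $n\to$ case reduces to the $\lfloor n/2\rfloor$ and $\lfloor(n+1)/2\rfloor$ cases, matching $f_{n,\{0\}}(X)=(\underline{1})(\underline{3})\cdots(\underline{\tilde n})$ against $f_{m_1,\{0\}}\cdot f_{m_2,\{0\}}\cdot(\text{interleaving factor})$, and to check the interleaving factor equals $(\underline{n})!/\big((\underline{m_1})!(\underline{m_2})!\,\prod_\sigma(\underline{2\sigma})\big)$ or whatever the bookkeeping demands.

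Alternatively — and this may be the simpler presentation — I would avoid the recursion and argue directly. Writing each ascending $w\in\cbe$ in window notation, the values $w(1),\dots,w(n)$ form a strictly increasing sequence in $[\pm n]_0\setminus\{0\}$ of length $n$ that, together with its negatives, exhausts $\{\pm1,\dots,\pm n\}$; such a sequence is determined by a subset $T\subseteq[n]$ recording which $|w(i)|$ occur with a negative sign in the ascending window (equivalently, $\rho_w$), subject to the chessboard constraint that $|w(i)|\equiv$ (fixed parity) $\bmod 2$ when $i$ has a given parity. I would compute $l(w)$ and $L(w)$ in terms of $T$ using \eqref{equ:length} and Lemma~\ref{lem:L}~\eqref{enu:L(1)}, obtaining $\sum_{w\in\cbe^{[n-1]}}(-1)^{l(w)}X^{L(w)}=\prod$ over the two parity blocks of a factor of the shape $\sum_{k}(-1)^{(\cdot)}X^{(\cdot)}\binom{\lceil n/2\rceil}{k}_{X^?}$, which collapses to $(\underline{1})(\underline{3})\cdots$ by the $q$-binomial theorem. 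The main obstacle, in either approach, is the bookkeeping in the third step: verifying that the parity $(-1)^{l(w)}$ and the exponent $L(w)$ combine with the multiplicities of the interleaving/colour-split so that almost everything cancels, leaving exactly the product $(\underline{1})(\underline{3})\cdots(\underline{\tilde n})$ — in particular correctly tracking the $\coxneg(w_1)$ term in \eqref{enu:L(2)} and the effect of the floor functions $m_1=\lfloor(n+1)/2\rfloor$, $m_2=\lfloor n/2\rfloor$ as $n$ ranges over even and odd values.
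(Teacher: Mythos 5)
Your opening move -- using Lemma~\ref{lem:cb-supp} to restrict the sum to the ascending even chessboard elements $\cbe^{[n-1]}$ -- is exactly the paper's first step, but beyond that point what you give is a plan rather than a proof, and its central structural claims are either unverified or false. Most seriously: if $w\in\cbe^{[n-1]}$ is ascending, then \emph{both} components of $\sigma_0(w)=(w_1,w_2)$ are themselves ascending, because the value compressions $x\mapsto\mathrm{sign}(x)(|x|+1)/2$ on odd values and $x\mapsto\mathrm{sign}(x)|x|/2$ on even values are strictly increasing. So your assertion that ``$w_1$ ranges over all of $B_{m_1}$'' (even with the value set on odd columns prescribed) is wrong, and with it the proposed appeal to Proposition~\ref{pro:Perm-case} collapses; indeed $\cbe^{[n-1]}$ has only $2^{\lceil n/2\rceil}$ elements, nowhere near enough to sweep out $B_{m_1}$. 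Nor is the set of pairs $(w_1,w_2)$ that do arise simply ``ascending $\times$ ascending plus a free interleaving'': already for $n=2$ the only ascending elements of $\mathcal{C}_{2,0}$ are $\diag(1,1)$ and $\diag(-1,1)$, so only two of the four pairs in $B_1\times B_1$ occur ($[1,-2]$ and $[-1,-2]$ fail to be ascending). The cross inequalities $w(2k-1)<w(2k)<w(2k+1)$ genuinely couple $w_1$ and $w_2$, and it is precisely this coupling that your sketch defers to ``bookkeeping''. Your alternative route (parametrising by the sign pattern and hoping the sum collapses by a $q$-binomial identity) is likewise not carried out, and the actual structure of $\cbe^{[n-1]}$ is a nested one (cf.\ Lemma~\ref{lem:ascending-struct}) rather than an independent product over the two parity blocks, so the claimed shape of the answer is not justified.

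For comparison, the paper's proof never invokes $\sigma_0$ here: it first notes that for odd $n$ every element of $\mathcal{C}_{n+1,0}^{[n]}$ has a forced entry $1$ in position $(n+1,n+1)$, giving an $l$- and $L$-preserving bijection with $\cbe^{[n-1]}$ (so the odd case implies the even case), and then proves the odd case by induction in steps of two: every element of $\mathcal{C}_{n+2,0}^{[n+1]}$ is $w^{+}$ or $w^{-}$ for a unique $w\in\cbe^{[n-1]}$, where $w^{+}$ appends two $+1$'s in the bottom-right corner and $w^{-}$ places $w$ in the upper-right block with two $-1$'s below, and one computes $L(w^{+})=L(w)$, $l(w^{+})=l(w)$, $L(w^{-})=L(w)+n+2$, $l(w^{-})\equiv l(w)+1\bmod 2$, producing exactly the factor $(1-X^{n+2})$. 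To repair your argument you would need to prove a precise parametrisation of $\cbe^{[n-1]}$ and then actually evaluate the resulting signed sum -- which is the entire content of the proposition, not a routine verification -- so as it stands the proposal has a genuine gap.
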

\begin{proof}
  By Lemma~\ref{lem:cb-supp}, it is enough to prove the assertion
  where the sum runs over~$\cbe^{[n-1]}$, that is ascending even
  chessboard elements.  Assume first that $n$ is odd, and that
  Proposition~\ref{pro:Ascending} is true for $n$. Since $n$ is odd,
  we have $\tilde{n}=\widetilde{n+1}=n$.  In this case restriction of
  to $[\pm n]_0$ yields a one-to-one correspondence between elements
  of~$\mathcal{C}_{n+1,0}^{[n]}$ and elements
  of~$\cbe^{[n-1]}$. Indeed, if $w\in \mathcal{C}_{n+1,0}^{[n]}$ then
  $w_{n+1,n+1}=1$. Moreover, it is clear that $L$ and $l$ are
  preserved under this correspondence.  Hence
$$ \sum_{w\in\mathcal{C}_{n+1,0}^{[n]}}(-1)^{l(w)}X^{L(w)}
  =\sum_{w\in\cbe^{[n-1]}}(-1)^{l(w)}X^{L(w)}
  =(\underline{1})(\underline{3})\cdots(\underline{\tilde{n}})=(\underline{1})(\underline{3})\cdots(\underline{\widetilde{n+1}}).$$
  Hence, if Proposition~\ref{pro:Ascending} is true for all odd $n$
  then it is also true for all even $n$.

We now prove Proposition~\ref{pro:Ascending} for odd $n$ by induction
in steps of two. For $n=1$ we have $f_{1,\{0\}}(X)=1-X$, and
$B_{1}=\{1,-1\}=B_{1}^{\varnothing}$, so
\[
\sum_{w\in
  B_{1}^\varnothing}(-1)^{l(w)}X^{L(w)}=(-1)^{l(1)}X^{L(1)}+(-1)^{l(-1)}X^{L(-1)}=1-X.
\]
Assume now that $n$ is odd and that Proposition~\ref{pro:Ascending}
holds for~$n$. We show how every element in
$\mathcal{C}_{n+2,0}^{[n+1]}$ is obtained from one in $\cbe^{[n-1]}$
in exactly one of two ways. Let $w\in\cbe^{[n-1]}$.  Then we may
associate to $w$ two elements in $\mathcal{C}_{n+2,0}^{[n+1]}$,
namely

\[
w^{+}:=\begin{pmatrix}w\\
 & 1\\
 &  & 1
\end{pmatrix}\quad \text{ and } \quad w^{-}:=\begin{pmatrix} &  & w\\
 & -1\\
-1
\end{pmatrix}.
\]
We claim that all elements in $\mathcal{C}_{n+2,0}^{[n+1]}$ are of
this form. To see this, let $v\in\mathcal{C}_{n+2,0}^{[n+1]}$. If
$v_{n+2,j(n+2)}=1$ then $j(n+2)=n+2$, since $v$ is ascending. Deleting
row $n+2$ and column $n+2$ leaves an element
$v'\in\mathcal{C}_{n+1,0}^{[n]}$.  Since
$v\in\mathcal{C}_{n+2,0}^{[n+1]}$, we must have $v'_{n+1,n+1}=1$, and
so $v$ is of the form~$w^+$. If~$v_{n+2,j(n+2)}=-1$ then
$j(n+2)=1$, since $v$ is ascending. Deleting row $n+2$ and column $1$
leaves an element $v'\in \mathcal{C}_{n+1,1}^{[n]}$. Hence
$w_{n+1,2}=-1$ and $v$ is of the form~$w^-$.

By Lemma~\ref{lem:L} \eqref{enu:L(1)}, we have $L(w^{+})=L(w)$ and
$L(w^{-})=n+2+L(w)$. Moreover, $l(w^{+})=l(w)$, and
$$
l(w^{-}) =(\coxneg+\coxnsp)(w^{-})=2+\coxneg(w)+2n+1+\coxnsp(w)
\equiv 1 + l(w)\bmod{(2)}.
$$
By the induction hypothesis, we obtain
\begin{align*}
\sum_{w\in\mathcal{C}_{n+2,0}^{[n+1]}}(-1)^{l(w)}X^{L(w)} & =\sum_{w\in\cbe^{[n-1]}}\left((-1)^{l(w^{+})}X^{L(w^{+})}+(-1)^{l(w^{-})}X^{L(w^{-})}\right)\\
 & =\sum_{w\in\cbe^{[n-1]}}\left((-1)^{l(w)}X^{L(w)}+(-1)^{1+l(w)}X^{n+2+L(w)}\right)\\
 & =\sum_{w\in\cbe^{[n-1]}}(-1)^{l(w)}X^{L(w)}(1-X^{n+2})\\
 & =(\underline{1})(\underline{3})\cdots(\underline{n})(1-X^{n+2})=(\underline{1})(\underline{3})\cdots(\underline{n})(\underline{\widetilde{n+2}}).
\end{align*}
\end{proof}

We record, without further proof, a corollary of the proof of
Proposition~\ref{pro:Ascending} on the structure of ascending even
chessboard elements.

\begin{lem}\label{lem:ascending-struct}
Let $w\in \mathcal{C}_{n,0}^{[n-1]}$ be an ascending even chessboard
element and $j\in \lfloor\frac{n}{2}\rfloor$. Then $i(2j)-i(2j-1)$ is
odd. Furthermore, the following hold:
\begin{enumerate}
\item If $w(2j)>0$ then $i(2j)-i(2j-1)>0$ and $w^{-1}(\imin)<0$ for all
$\imin\in\N$ such that $i(2j-1) <\imin< i(2j)$. Moreover,
$w(2j-1)>0$ unless possibly if $i(2j-1)=1$.
\item If $w(2j)<0$ then $i(2j-1) - i(2j)=1$.
\end{enumerate}
\end{lem}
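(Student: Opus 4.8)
The plan is to argue directly from the two structural features of an element $w\in\mathcal{C}_{n,0}^{[n-1]}$: it is ascending, so $w(1)<w(2)<\dots<w(n)$, and it is an even chessboard element, so the nonzero entry of column $r$ lies in row $i(r)=|w(r)|$ with $i(r)+r$ even, i.e. $i(r)\equiv r\bmod 2$. I will also use the elementary observation that, for $e\in\{1,\dots,n\}$, one has $w^{-1}(e)<0$ exactly when $+e$ is not a window value of $w$ (equivalently, when $-e$ is). Write $k=\coxneg(w)$, so that $w(1)<\dots<w(k)<0<w(k+1)<\dots<w(n)$ and $|w(1)|>\dots>|w(k)|$. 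The first assertion is then immediate: $i(2j-1)$ is odd, $i(2j)$ is even, so their difference is odd.

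For the remaining statements I split on the sign of $w(2j)$. \emph{Assume $w(2j)>0$.} If also $w(2j-1)>0$, then $i(2j-1)=w(2j-1)<w(2j)=i(2j)$, and since columns $2j-1,2j$ are adjacent, $w(2j-1)$ and $w(2j)$ are consecutive entries of the increasing sequence $w(1)<\dots<w(n)$; hence no $+e$ lies strictly between them, so $w^{-1}(e)<0$ for $i(2j-1)<e<i(2j)$, and $w(2j-1)>0$, as required. If instead $w(2j-1)<0$, then columns $1,\dots,2j-1$ carry precisely the negative values, so $k=2j-1$; were $|w(2j-1)|\geq 3$, the value $1$ would be smaller than every negative absolute value, hence $+1$ would occur, necessarily in an odd column $p>k=2j-1$, so $p\geq 2j+1$ and $1=w(p)>w(2j)\geq 1$, absurd. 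Thus $w(2j-1)=-1$, $i(2j-1)=1$ (the exceptional case), and $i(2j)=w(2j)>1$; moreover any $+e$ with $1<e<w(2j)$ would have to sit in a column $<2j$, which is impossible since those columns are negative, so $w^{-1}(e)<0$ there too.

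\emph{Now assume $w(2j)<0$}; this is the substantive case. Columns $1,\dots,2j$ carry the $2j$ smallest values, so $k\geq 2j$ and $|w(2j-1)|>|w(2j)|$; put $q'=i(2j)=|w(2j)|$ (even) and $q=i(2j-1)=|w(2j-1)|$ (odd), and suppose for contradiction that $q\geq q'+3$. The absolute values in columns $1,\dots,2j$ are $q'$ together with $|w(1)|,\dots,|w(2j-1)|\geq q>q'+1$, so $q'+1$ is not among them; hence $+(q'+1)$ occurs, and in an odd column $p$ (as $q'+1$ is odd) with $p>k\geq 2j$, so $p\geq 2j+1$. On the other hand, within $\{1,\dots,q'\}$ the negated values are exactly $q'$ and the $k-2j$ absolute values $|w(2j+1)|,\dots,|w(k)|$ (all $<q'$), so there are $q'-1-(k-2j)$ positive values $\leq q'$, filling columns $k+1,\dots,q'-1+2j$; since $q'+1$ is not negated it is the smallest positive value exceeding $q'$, so it fills column $q'+2j$. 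Therefore $p=q'+2j$ is even, contradicting $p$ odd. Hence $q=q'+1$, i.e. $i(2j-1)-i(2j)=1$.

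The genuine obstacle is the last case: everything turns on invoking the parity condition $i(r)\equiv r\bmod 2$ twice — to force $+(q'+1)$ into an odd column, and to pin that column down as $q'+2j$ via a count of the small positive values — and then colliding the two. One could instead induct on $n$ using the recursive description from the proof of Proposition~\ref{pro:Ascending}: for odd $n$, every $w\in\mathcal{C}_{n,0}^{[n-1]}$ is $v^{+}$ or $v^{-}$ for a unique $v\in\mathcal{C}_{n-2,0}^{[n-3]}$, and for even $n$ one deletes the last row and column to land in $\mathcal{C}_{n-1,0}^{[n-2]}$; each pair $(2j-1,2j)$ is then inherited from a pair of $v$ with unchanged absolute values and signs, or is a single explicit new pair checked by inspection — but verifying the inherited pairs in the $v^{-}$ step costs about as much as the direct argument above.
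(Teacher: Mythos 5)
Your proof is correct, and it takes a genuinely different route from the paper's. The paper gives no separate argument at all: it records the lemma as a by-product of the inductive construction in the proof of Proposition~\ref{pro:Ascending}, where each element of $\mathcal{C}_{n+2,0}^{[n+1]}$ arises as $w^{+}$ or $w^{-}$ from some $w\in\mathcal{C}_{n,0}^{[n-1]}$ (and the even case by deleting the last row and column), so that every column pair $(2j-1,2j)$ is either the explicitly described new pair or an inherited pair with unchanged signs and row gaps -- exactly the alternative you sketch in your closing remark. You instead argue directly from the two defining conditions (ascending order and the parity $i(r)\equiv r\bmod 2$), using the observation that $w^{-1}(e)<0$ precisely when $+e$ is not a window value; the only substantive step is case (2), where you force $+(q'+1)$ into an odd column by parity and then pin its column down as $q'+2j$ (even) by counting the positive window values at most $q'$, producing the required contradiction when $i(2j-1)-i(2j)\geq 3$. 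I checked this count: the negated values in $[q']$ are exactly $q'$ and $|w(2j+1)|,\dots,|w(k)|$, so the smallest positive window value exceeding $q'$ does sit in column $q'+2j$, and the collision is genuine. Your approach buys a self-contained statement-level proof, independent of the recursion (useful if one wants the structural lemma before, or without, Proposition~\ref{pro:Ascending}); the paper's approach buys economy, since the recursion is already in hand and the inherited-pair verification is routine, at the cost of leaving the details implicit.
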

Informally, an ascending even chessboard element is built up from pairs of
adjacent columns satisfying one of the following:
\begin{itemize}
\item[(1)] Both columns typically contain positive entries, ``sandwiching'' an
even number of consecutive rows of $w$, all containing negative entries.
\item[(2)] Both columns contain negative entries
  in adjacent rows of $w$.
\end{itemize}
In particular, ascending even chessboard elements have no \emph{odd
  sandwich} in the sense of Definition~\ref{def:Odd sandwich}.

\section{The case $I=[n-1]_{0}$\label{sec:The-case2}}

In this section we prove Conjecture~\ref{conjecture} in the case where
$I=[n-1]_{0}$, that is Case~\eqref{2} of Theorem~\ref{thm:Main}. In
this case, we have $B_{n}^{I^{\mathrm{c}}}=B_{n}$ and
$f_{n,[n-1]_0}(X)=\frac{(\underline{n})!}{\prod_{\sigma=1}^{0}(\underline{
2\sigma})}=(\underline{n})!$.
By Lemma~\ref{lem:cb-supp}, the sum defining $f_{n,[n-1]_0}(X)$ is
supported on even chessboard matrices, that is
\[
\sum_{w\in B_{n}}(-1)^{l(w)}X^{L(w)}
=\sum_{w\in\cbe}(-1)^{l(w)}X^{L(w)}.
\]
We now show that the latter sum is supported on diagonal
elements. More precisely, let
\[
\mathcal{D}_{n}=\{(w_{ij})\in\cbe\mid w_{ij}=0\text{ if }i\neq j\}
\]
denote the subgroup of $\cbe$ consisting of diagonal elements.
\begin{lem}\label{lem:diag-supp}
\[
\sum_{w\in\cbe}(-1)^{l(w)}X^{L(w)}=\sum_{w\in
  \mathcal{D}_{n}}(-1)^{l(w)}X^{L(w)}.
\]
\end{lem}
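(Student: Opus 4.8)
The plan is to construct a sign-reversing involution on $\cbe \setminus \mcDn$ that preserves $L$ and flips the parity of $l$, exactly in the spirit of Lemma~\ref{lem:cb-supp}. Given $w = (w_{ij}) \in \cbe$ that is not diagonal, the combinatorial description of $\cbe$ via the isomorphism $\sigma_0 \colon \cbe \xrightarrow{\sim} B_{m_1} \times B_{m_2}$ reduces everything to the two factors: $w$ is diagonal if and only if both $w_1 \in B_{m_1}$ and $w_2 \in B_{m_2}$ are diagonal. So it suffices to produce, on each of $B_{m_1}$ and $B_{m_2}$ separately, a sign-reversing involution on the non-diagonal elements which is compatible with the formula \eqref{enu:L(2)} for $L$. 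The natural candidate is to right-multiply by a Coxeter generator $s_i$ (with $i \in [m-1]$, i.e.\ a generator lying inside the symmetric group $S_m \subseteq B_m$) chosen canonically: for the first non-diagonal factor, say $w_1$, pick $i$ minimal such that $|w_1(i)| \neq i$, and among the positions involved swap the two columns that break diagonality. One must check that this operation is an involution on the set of non-diagonal elements of $B_m$, flips $l$ by $\pm 1$, and leaves the relevant pieces of $L$ untouched.

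The key steps, in order, would be: (i) translate the statement through $\sigma_0$ and the additivity formula $L(w) = \coxneg(w_1) + l(w) - l(w_1) - l(w_2)$ of Lemma~\ref{lem:L}\,\eqref{enu:L(2)}, noting also that $l$ on $\cbe$ decomposes compatibly (since $\cbe \cong B_{m_1} \times B_{m_2}$ as Coxeter groups, with length adding up); (ii) on $B_m$, identify the non-diagonal elements and, for such $w$, locate the first index $i$ where $|w(i)| \neq i$, then define $w^\ast$ by an explicit transposition of columns/entries designed so that $(w^\ast)^\ast = w$ and $w^\ast \neq w$; (iii) verify $l(w^\ast) = l(w) \pm 1$, so $(-1)^{l(w^\ast)} = -(-1)^{l(w)}$; (iv) verify the $L$-invariance: since multiplying $w_1$ (resp.\ $w_2$) by a generator $s_i$ with $i \geq 1$ changes neither $\coxneg(w_1)$ (a generator in the symmetric-group part does not create or destroy a sign when chosen appropriately) nor the combination $l(w) - l(w_1) - l(w_2)$ in a way that survives after reassembling — here one uses formula \eqref{enu:L(1)}, $L = a + b + 2c$, which depends only on the pattern of $|w(j)|$ and the signs, to see directly that swapping two adjacent-in-$w$ columns whose $|w|$-values are "out of diagonal order" leaves $a$, $b$, $c$ unchanged; (v) conclude that the contributions of non-diagonal $w$ cancel in pairs, leaving the sum over $\mcDn$.

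The main obstacle I expect is step (ii)–(iv): choosing the involution so that it is simultaneously well-defined (genuinely a fixed-point-free involution on $\cbe \setminus \mcDn$), $L$-preserving, and $l$-parity-reversing. The subtlety is that an arbitrary adjacent transposition inside $B_m$ can alter $\coxneg(w_1)$ or the sign-pattern contributions to $a$ and $c$; one must engineer the move — essentially "swap the column containing a non-zero entry outside the diagonal with its neighbour, without touching signs" — so that it only affects the $\coxinv + \coxnsp$ part of $l$ by $\pm 1$ and cancels out of the three counts $a, b, c$ defining $L$ via \eqref{enu:L(1)}. A clean way to see the $L$-invariance is to observe, as in the proof of Lemma~\ref{lem:cb-supp}, that the move multiplies $w$ by a suitable $s_k \in B_n$ (for a well-chosen $k$ determined by which factor and which index we are acting on) whose effect on the pair of columns it interchanges does not change whether either column is "odd" or "even", hence does not change the parity conditions, while $b$ and $c$ count inversions that are symmetric under the swap. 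Once the involution is pinned down, the verification of the three properties is a short case analysis on $2 \times 2$ submatrices, exactly as in Lemma~\ref{lem:L}.
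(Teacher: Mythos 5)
Your overall strategy---a sign-reversing involution on $\cbe\setminus\mcDn$ that preserves $L$ and flips the parity of $l$, in the spirit of Lemma~\ref{lem:cb-supp}---is exactly the paper's strategy, but the concrete involution you propose does not work, and the gap is precisely in the steps (ii)--(iv) you flagged. First, your reduction rests on the claim that Coxeter length is additive across $\sigma_0$, i.e.\ $l(w)=l(w_1)+l(w_2)$ for $w\in\cbe$; this is false (for $w=[1,-2]\in\mathcal{C}_{2,0}$ one has $l(w)=3$ while $l(w_1)+l(w_2)=1$; indeed, if it were true, Lemma~\ref{lem:L}~\eqref{enu:L(2)} would force $L=\coxneg(w_1)$ on $\cbe$, which fails already for $\mathrm{diag}(1,-1)$). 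Second, and more seriously, the proposed move---right multiplication by an adjacent generator in a factor, i.e.\ swapping two columns of $w$ of the same parity, ``without touching signs''---neither preserves $L$ nor maps $\cbe\setminus\mcDn$ to itself. Take $w=[3,2,1]\in\mathcal{C}_{3,0}\setminus\mathcal{D}_3$: your recipe (minimal non-diagonal position in $w_1=[2,1]$, swap) exchanges columns $1$ and $3$ of $w$ and produces the identity, which lies in $\mathcal{D}_3$ and has $L=0$, whereas $L(w)=2$. The reason is that inversions of the swapped pair with the intermediate column of opposite parity are \emph{not} symmetric under the swap when that column's non-zero entry sits in a row strictly between the two swapped rows; your step (iv) asserts this symmetry without the needed ``non-betweenness'' hypothesis.

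Moreover, no repair that avoids sign changes can succeed: the eight non-diagonal elements of $\mathcal{C}_{3,0}$ all have the same underlying permutation $|w|=(1\,3)$, so any pairing among them must alter signs only---a type of move your plan explicitly excludes. The paper's proof handles this by a different move: for $w\in\cbe\setminus\mcDn$ it takes the minimal $i\in[n-2]_0$ such that $j(i+1)$ is \emph{not} between $j(i)$ and $j(i+2)$, and sets $w^\circ=s_{i+1}s_is_{i+1}w$, i.e.\ it swaps rows $i$ and $i+2$ when $i\geq 1$ and changes the sign in row $2$ when $i=0$ (this $i=0$ case is what pairs up the elements above, e.g.\ $[3,2,1]\leftrightarrow[3,-2,1]$). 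The non-betweenness condition is exactly what makes $L=a+b+2c$ invariant under the swap, the odd length of $s_{i+1}s_is_{i+1}$ flips the sign, and minimality of $i$ makes the map a well-defined fixed-point-free involution on $\cbe\setminus\mcDn$. So while your framework is right, the essential combinatorial ingredients---the betweenness criterion and the sign-change move---are missing, and as stated the argument fails.
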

\begin{proof}
  Observe that $w\in \cbe \setminus \mcDn$ if and only if there exists
  $i\in[n-2]_0$ such that either $j(i+1) < \min\{j(i),j(i+2)\}$ or
  $j(i+1) > \max \{(j(i),j(i+2)\}$. Let $w\in \cbe\setminus \mcDn$ and
  let $i$ be minimal with respect to this property. Define $w^\circ :=
  s_{i+1}s_i s_{i+1}w \in \cbe \setminus \mcDn.$ Informally, $w^\circ$
  is obtained from $w$ by interchanging rows $i$ and $i+2$ if $i$ is
  positive, and by changing the sign in row $2$ if~$i=0$. Clearly
  $l(w^\circ) \equiv l(w) + 1 \bmod (2)$. Using Lemma \ref{lem:L} it
  is easy to see that $L(w^\circ) = L(w)$. Note that $(w^\circ)^\circ
  = w$ and~$w\neq w^\circ$. Every element $w\in\cbe\setminus\mcDn$ may
  thus be paired up with a unique, distinct element $w^{\circ}\in \cbe
  \setminus \mcDn$ such that
  $(-1)^{l(w)}L(w)+(-1)^{l(w^\circ)}L(w^{\circ})=0$. This implies the
  assertion.

\end{proof}

\begin{prop}
\label{pro:Perm-case}Conjecture~\ref{conjecture} holds for
$I=[n-1]_{0}$, that is
$$\sum_{w\in B_n} (-1)^{l(w)}X^{L(w)} = (\ul{n})!.$$\end{prop}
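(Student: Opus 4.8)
The plan is to combine Lemma~\ref{lem:cb-supp} and Lemma~\ref{lem:diag-supp} to reduce the sum over all of $B_n$ to a sum over the diagonal subgroup $\mathcal{D}_n\subseteq\cbe$, and then to evaluate this diagonal sum directly. Since $\mathcal{D}_n$ consists of the diagonal signed permutation matrices, there is a natural bijection $\mathcal{D}_n\cong\{\pm1\}^n$ sending $w$ to its row pattern $(\rho_w(1),\dots,\rho_w(n))$. The task is thus to compute the values of $l$ and $L$ on a diagonal element $w=\diag(\epsilon_1,\dots,\epsilon_n)$ with $\epsilon_i\in\{\pm1\}$, and then to sum $(-1)^{l(w)}X^{L(w)}$ over all $2^n$ sign vectors.

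First I would record the relevant statistics on a diagonal $w$. Here $\coxinv(w)=0$ (the underlying permutation is trivial), while $\coxneg(w)=\#\{i\mid\epsilon_i=-1\}$ and $\coxnsp(w)=\#\{\{i,j\}\mid\epsilon_i=\epsilon_j=-1\}=\binom{\coxneg(w)}{2}$, so by~\eqref{equ:length}, $l(w)=k+\binom{k}{2}$ where $k=\coxneg(w)$. For $L$, I would use Lemma~\ref{lem:L}~\eqref{enu:L(1)}: on a diagonal matrix all columns have a unique nonzero entry with $i(j)=j$, so $b(w)=0$, $c(w)=0$ (the condition $i(j)<i(j')$ forces $j<j'$ but then also needs $i(j)<i(j')$ trivially, yet $c$ additionally requires... actually $c(w)$ counts pairs with $j<j'$, $i(j)<i(j')$, $j\not\equiv j'$, $M_{i(j'),j'}=-1$, i.e.\ $j<j'$, $j\not\equiv j'$, $\epsilon_{j'}=-1$) — so $c(w)$ counts, for each odd/even-mismatched pair $j<j'$ with $\epsilon_{j'}=-1$, and $a(w)=\#\{j\text{ odd}\mid\epsilon_j=-1\}$. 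Putting this together, $L(w)=a(w)+2c(w)$, which after reindexing equals $\sum_{j:\epsilon_j=-1}\bigl(\#\{j'<j\mid j'\not\equiv j\}\cdot 2 \text{ (wait—careful)}\bigr)$; the cleaner route is to use the identity $L(w)=\coxneg(w_1)+l(w)-l(w_1)-l(w_2)$ from Lemma~\ref{lem:L}, where $\sigma(w)=(w_1,w_2)$ splits $w$ into its odd-indexed and even-indexed diagonal blocks, both again diagonal. This gives a recursion that I would exploit below.

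The main computational step is then the summation. Using the splitting, write $k_1=\coxneg(w_1)$, $k_2=\coxneg(w_2)$ with $k_1+k_2=k$, and note $l(w)-l(w_1)-l(w_2)=\binom{k}{2}-\binom{k_1}{2}-\binom{k_2}{2}=k_1k_2$, so $L(w)=k_1+k_1k_2$ for diagonal $w$. Hence
\begin{align*}
\sum_{w\in\mathcal{D}_n}(-1)^{l(w)}X^{L(w)}
&=\sum_{w_1\in\mathcal{D}_{m_1}}\sum_{w_2\in\mathcal{D}_{m_2}}(-1)^{l(w_1)+l(w_2)}X^{k_1+k_1k_2}\\
&=\sum_{k_1=0}^{m_1}\binom{m_1}{k_1}(-1)^{k_1+\binom{k_1}{2}}X^{k_1}\sum_{k_2=0}^{m_2}\binom{m_2}{k_2}(-1)^{k_2+\binom{k_2}{2}}X^{k_1k_2},
\end{align*}
where $m_1=\lfloor(n+1)/2\rfloor$, $m_2=\lfloor n/2\rfloor$. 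The inner sum over $k_2$ is, by the $q$-binomial theorem (with $q=X$), essentially $\prod_{r=1}^{m_2}(1-X^{k_1}\cdot X^{r-1})$-type product; more precisely $\sum_{k}\binom{m}{k}(-1)^{k+\binom{k}{2}}Y^{k}=\prod_{r=0}^{m-1}(1-X^{r}Y)$ when one sets $Y=X^{k_1}$ and recognizes $\binom{m}{k}(-1)^{\binom{k}{2}}$ as the Gaussian-binomial coefficient evaluated appropriately. I expect the resulting telescoping product, after the outer sum, to collapse to $(\underline{n})!=\prod_{j=1}^n(1-X^j)$. I anticipate the main obstacle to be handling this double sum cleanly — in particular getting the $q$-binomial identities and the interaction of the two parity-blocks exactly right, including the parity-dependent sign $(-1)^{k+\binom{k}{2}}$, and verifying the product telescopes to $(\underline{n})!$ for both parities of $n$ (the cases $n=2m$ and $n=2m+1$ should be checked separately, or unified via $m_1,m_2$). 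An alternative, possibly cleaner, strategy for this last step is induction on $n$ in steps of one: peeling off the last coordinate $\epsilon_n$ of the sign vector multiplies the generating function by a factor $(1-X^{?})$ — the change in $l$ and $L$ when toggling $\epsilon_n$ from $+1$ to $-1$ depends on how many of $\epsilon_1,\dots,\epsilon_{n-1}$ are $-1$, and summing over those via the inductive hypothesis should again yield the clean factor $(1-X^n)$. I would present whichever of these two routes produces the shortest verification, likely the $q$-binomial one, falling back to induction if the bookkeeping there becomes unwieldy.
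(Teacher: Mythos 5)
Your reduction to the diagonal subgroup $\mathcal{D}_n$ via Lemmas~\ref{lem:cb-supp} and \ref{lem:diag-supp} is exactly the paper's first step, but the evaluation of the statistics on $\mathcal{D}_n$ on which your summation rests is wrong. In the paper's conventions (see \eqref{equ:length}), $\coxinv$ counts inversions of the signed window $(w(1),\dots,w(n))$, so it does not vanish on a diagonal element: for $w=\diag(\epsilon_1,\dots,\epsilon_n)$ with $N=\{j\mid\epsilon_j=-1\}$, every pair $i<j$ with $j\in N$ is an inversion, and $\coxnsp$ also counts the mixed pairs $i<j$ with $\epsilon_i=1$, $\epsilon_j=-1$, not only pairs of two negatives. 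The correct values are $l(w)=\sum_{j\in N}(2j-1)$ and, via Lemma~\ref{lem:L}~\eqref{enu:L(1)} (where indeed $b(w)=0$, but $c(w)=\sum_{j\in N}\lfloor j/2\rfloor$), $L(w)=\sum_{j\in N}j$. In particular $(-1)^{l(w)}=(-1)^{\coxneg(w)}$, not $(-1)^{k+\binom{k}{2}}$, and $L(w)$ depends on the \emph{positions} of the $-1$'s, not merely on the counts $k_1,k_2$ in the odd and even places: for instance $l([1,-2])=3$, not $1$, and $L(\diag(1,1,-1,1))=3$, whereas your formula $k_1+k_1k_2$ gives $1$. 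Consequently the passage to a double sum over $(k_1,k_2)$ with ordinary binomial coefficients is invalid; already for $n=2$ your double sum evaluates to $X^2-X$ rather than $(1-X)(1-X^2)=(\ul{2})!$.

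The argument is easily repaired, and then becomes either the paper's proof or a one-line variant of it. With the correct formulas the contributions of the individual signs $\epsilon_j$ are independent, so $\sum_{w\in\mathcal{D}_n}(-1)^{l(w)}X^{L(w)}=\sum_{N\subseteq[n]}(-1)^{\#N}X^{\sum_{j\in N}j}=\prod_{j=1}^{n}(1-X^{j})=(\ul{n})!$, with no $q$-binomial identities needed (and note that your appeal to the $q$-binomial theorem with ordinary rather than Gaussian binomial coefficients would not go through in any case). Your fallback strategy --- induction on $n$ by adjoining the last diagonal entry --- is precisely what the paper does; the point to verify there is that the increments are \emph{independent} of $\epsilon_1,\dots,\epsilon_{n-1}$ (contrary to your expectation): for $v\in\mathcal{D}_{n-1}$ one has $l(v^{-})=l(v)+2n-1$ and $L(v^{-})=L(v)+n$, while $v^{+}$ changes neither statistic, which yields the factor $(1-X^{n})$ and completes the induction.
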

\begin{proof}
  The proof is by induction on~$n$. The assertion holds trivially
  for~$n=1$.  Assume now that the assertion is true for some
  $n-1\geq1$. Given $v\in \mathcal{D}_{n-1}$ we define
\[
v^{+}:=\begin{pmatrix}v\\
 & 1
\end{pmatrix}\in\mathcal{D}_{n} \quad \textrm{ and }\quad
v^{-}:=\begin{pmatrix}v\\
 & -1
\end{pmatrix}\in\mathcal{D}_{n}.
\]
Using the formula $l=\coxinv+\coxneg+\coxnsp$ (cf.~\eqref{equ:length})
and Lemma~\ref{lem:L}~\eqref{enu:L(1)} we see that
\begin{alignat*}{2}
l(v^{+})&=l(v), & \qquad l(v^{-})&=l(v)+2n-1,\\
L(v^{+})&=L(v), & \qquad L(v^{-})&=L(v)+n.
\end{alignat*}
Hence, by Lemma~\ref{lem:cb-supp}, Lemma~\ref{lem:diag-supp} and the
induction hypothesis, we obtain

\begin{align*}
\sum_{w\in B_{n}}(-1)^{l(w)}X^{L(w)} &
=\sum_{w\in\cbe}(-1)^{l(w)}X^{L(w)} =\sum_{w\in
  \mathcal{D}_{n}}(-1)^{l(w)}X^{L(w)}\\ & =\sum_{v\in
\mathcal{D}_{n-1}}\left((-1)^{l(v^{+})}X^{L(v^{+})}+(-1)^{l(v^{-})}X^{L(v^{-})}
\right)\\ &
= \sum_{v\in
\mathcal{D}_{n-1}}\left((-1)^{l(v)}X^{L(v)}+(-1)^{l(v)+2n-1}X^{L(v)+n}\right)\\
&
=\sum_{v\in \mathcal{D}_{n-1}}(-1)^{l(v)}X^{L(v)}(1-X^{n})
=(\underline{n-1})!(1-X^{n})=(\underline{n})!.
\end{align*}

\end{proof}

\section{The case $n$ even and $I$ even\label{sec:The-case3}}\label{sec:case3}

In this section we push further the ideas that led to the proof of
Lemma~\ref{lem:cb-supp}. There we proved that the relevant sums over
$B_n^{I^{\mathrm{c}}}$ are supported over chessboard matrices
$\mathcal{C}_{n,0}^{I^{\mathrm{c}}}$. In the proof we described a sign
reversing involution $*$ on $B_n\setminus\mathcal{C}_{n,0}$ such that
$D(w) = D(w^*)$, $L(w) = L(w^*)$ and $l(w)\not\equiv l(w^*) \bmod (2)$
for all $w\in B_n\setminus\mathcal{C}_{n,0}$. Consequently, these
elements' contributions to the sums in question cancelled each other
out. A similar idea was used in the proof of
Lemma~\ref{lem:diag-supp}. In the current section we further restrict
the ``supporting sets'' $\mathcal{C}_{n,0}^{I^{\mathrm{c}}}$ and show
how, under suitable conditions, elements outside these sets may be
cancelled by means of a sign reversing involution; see
Definition~\ref{def:w.vee}. In Sections~\ref{subsec:first} and
\ref{subsec:second} we establish additivity results for $L$ with
respect to two parabolic factorisations. In conjunction, they allow us
to establish Conjecture~\ref{conjecture} in the case where
\foreignlanguage{english}{$n$ is even and
  $I\subseteq[n-1]_{0}\cap2\Z$}, that is Case~\eqref{3} of
Theorem~\ref{thm:Main}, in Proposition~\ref{pro:Even}.

\subsection{Parabolic factorisations and supporting
sets}\label{subsec:parabolic} Recall that we may factorise any element
$w\in B_{n}$ as $w=w^{[n-1]}w_{[n-1]}$, where $w^{[n-1]}\in
B_{n}^{[n-1]}$ is ascending, and $w_{[n-1]}\in\langle
s_1,\dots,s_{n-1}\rangle\cong S_{n}$; cf.~\eqref{equ:factorisation}.
Let $w\in\cbe$ be an even chessboard element. Since $\cb$ is a group
containing $\cbe$ as a subgroup, there are three possibilities for
this factorisation of $w$:
\begin{enumerate}
\item $w^{[n-1]},w_{[n-1]}\in\cbe$,
\item $w^{[n-1]},w_{[n-1]}\in\cbo$,
\item $w^{[n-1]},w_{[n-1]}\in B_{n}\setminus\cb$.
\end{enumerate}

\begin{defn}
Let
\[
\firste=\{w\in\cbe\mid w^{[n-1]},w_{[n-1]}\in\cbe\}
\]
 denote the set of even chessboard elements whose factorisation is
into even chessboard elements. Similarly, let
\[
\firstcb=\{w\in\cbe\mid(w^{[n-1]},w_{[n-1]}\in\cbe)\vee(w^{[n-1]},w_{[n-1]}
\in\cbo)\}
\]
 denote the set of even chessboard elements whose factorisation is
into chessboard elements.
\end{defn}
Note that $\firste\subseteq\firstcb\subseteq\mathcal{C}_{n,0}$ and
that $\firstcb=\firste$ if $n$ is odd.

Some of the key features of the case where $n$ and $I$ are even are
recorded in the following lemma. A subset
$I=\{i_{1},i_{2},\dots,i_{l}\}_{<}\subseteq[n-1]_{0}\cap2\Z$ is called
\emph{even}. We say that $w$ is of \emph{even descent type} if
$I=D(w)$ is even.

\begin{lem} \label{lem:evenperm-block}Let $n$ be even and $w\in\cb\cap
  S_{n}$ be a chessboard element in $S_{n}$.  Suppose that $D(w)$ is
  even, and write $\sigma(w)=(w_{1},w_{2})\in S_{n/2}\times
  S_{n/2}$. Then $w\in\cbe$ and $w_{1}=w_{2}\in
  S_{n/2}^{(I/2)^{\mathrm{c}}}$. In
  particular, for all even $I\subseteq[n-1]_0$,
\[
\firstcb^{I^{\mathrm{c}}}=\firste^{I^{\mathrm{c}}}.
\]
Moreover, $l_{S_{n}}(w)=4l_{S_{n/2}}(w_{1})$.\end{lem}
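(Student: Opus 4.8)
The plan is to unwind the combinatorial description of $\sigma$ and the characterisation of descents for chessboard permutations in $S_n$. Let $w \in \cb \cap S_n$ with $D(w)$ even, so $w$ is a (signed, in fact unsigned) permutation matrix all of whose nonzero entries lie on squares of one colour. First I would observe that since $w \in S_n$ we have $\coxneg(w) = 0$, so every entry $w_{i(j),j} = 1$; in particular the row pattern $\rho_w$ is constant. The colour of $w$ is determined by the parity of $i(1) + 1$. I claim the colour must be even, i.e.\ $w \in \cbe$: indeed, if $w \in \cbo$ then $j(i) \not\equiv i \bmod 2$ for every $i$, which for a permutation matrix in $S_n$ forces $w$ to pair up odd-indexed rows with even-indexed columns and vice versa; a short parity count (using that $n$ is even) shows such a $w$ always has an odd-indexed column $j$ with $w(j) > w(j+1)$, i.e.\ an odd descent, contradicting evenness of $D(w)$. (Alternatively: $\cbo \cap S_n$ is a coset of $\cbe \cap S_n$, and one checks directly that multiplication by the relevant coset representative shifts some descent by an odd amount.) Hence $w \in \cbe$ and $\sigma(w) = \sigma_0(w) = (w_1, w_2)$ with $w_1, w_2 \in S_{n/2}$ honest permutations.

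Next I would prove $w_1 = w_2$. Write $\sigma_0(w) = (w_1, w_2)$ where $w_1 = (w_{2a-1, 2b-1})$ and $w_2 = (w_{2a, 2b})$. The key point is that $D(w)$ being \emph{even} constrains how $w$ interleaves odd and even columns. Concretely, since $w \in \cbe$, an odd column $2b-1$ has its entry in an odd row $2a-1$, and the adjacent even column $2b$ has its entry in an even row. If $i < i'$ denote the rows of columns $2b-1$ and $2b$, the fact that $2b-1 \notin D(w)$ (odd, hence not a descent) forces $i < i'$, and then $2b \notin D(w)$ would be automatic only if the interleaving is ``rigid''; iterating, one shows that column $2b-1$ goes to row $2a-1$ if and only if column $2b$ goes to row $2a$ — that is, $w_1$ and $w_2$ realise the same underlying permutation of $[n/2]$. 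The cleanest way to package this: the even-descent condition says exactly that $w$ never separates the pair of columns $\{2b-1, 2b\}$ from the pair of rows $\{2a-1, 2a\}$, i.e.\ $w$ is ``block-diagonal of $2\times 2$ blocks'' up to a permutation of the blocks, and each $2 \times 2$ block is the identity (since $w \in \cbe$ and $\coxneg(w) = 0$). From this $w_1 = w_2$ is immediate. For the descent-set statement, $D(w_1) = D(w_2)$ is the set of block-indices $a$ where the $a$-th block outranks the $(a{+}1)$-th, which corresponds under $I \mapsto I/2$ precisely to $D(w)/2$; hence $D(w_1) \subseteq (I/2)$ when $D(w) \subseteq I$, i.e.\ $w_1 = w_2 \in S_{n/2}^{(I/2)^{\mathrm c}}$.

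The identity $\firstcb^{I^{\mathrm c}} = \firste^{I^{\mathrm c}}$ for even $I$ then follows formally: by definition $\firste \subseteq \firstcb$, so I need the reverse inclusion on elements of even descent type. If $w \in \firstcb^{I^{\mathrm c}}$ with $D(w) \subseteq I$ even, then its parabolic factors $w^{[n-1]}, w_{[n-1]}$ lie in $\cb$; the ascending factor $w^{[n-1]}$ is automatically in $\cbe$ (ascending chessboard elements are even — this is implicit in the structure lemma, or one checks $s_0$ is the unique generator changing colour and ascending elements avoid the relevant configuration), while $w_{[n-1]} \in \cb \cap S_n$ has $D(w_{[n-1]}) = D(w)$ even by \eqref{equ:add.length} and the fact that right descents are determined by the $S_n$-factor, so by the first part $w_{[n-1]} \in \cbe$; hence $w \in \firste$. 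Finally, for the length identity $l_{S_n}(w) = 4\, l_{S_{n/2}}(w_1)$: since $w$ is the $2\times2$-block permutation matrix induced by $\bar w := w_1 = w_2 \in S_{n/2}$, an inversion $(a,a')$ of $\bar w$ (so $a < a'$, $\bar w(a) > \bar w(a')$) contributes exactly the four inversions $(2a{-}1,2a'{-}1), (2a{-}1,2a'), (2a,2a'{-}1), (2a,2a')$ to $w$, and there are no others; thus $\coxinv(w) = 4\coxinv(\bar w)$, and with $\coxneg = \coxnsp = 0$ on $S_n$ this is $l_{S_n}(w) = 4 l_{S_{n/2}}(w_1)$.

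The main obstacle I expect is the first claim — that an even-descent chessboard permutation must be \emph{even} (white–white) rather than odd, and that the even-descent condition genuinely forces the rigid $2\times2$-block structure. Both reduce to small parity bookkeeping on adjacent columns, but the argument needs care because the ``no odd descent'' hypothesis has to be leveraged column-pair by column-pair; I would most likely isolate a sublemma stating: \emph{if $w \in \cbe \cap S_n$ and $2b - 1 \notin D(w)$ for all $b$, then the rows of columns $2b-1$ and $2b$ are consecutive, namely $i(2b-1) + 1 = i(2b)$}, proved by minimal-counterexample on $b$ exactly as in the proof of Lemma~\ref{lem:diag-supp}. Once that sublemma is in place the rest is bookkeeping.
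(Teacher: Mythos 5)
Your plan mirrors the paper's proof: rule out $\cbo$ by exhibiting a forced odd descent, use the even-descent hypothesis together with chessboard parity to force the $2\times2$-block structure (whence $w_1=w_2$ and the descent correspondence), deduce $\firstcb^{I^{\mathrm{c}}}=\firste^{I^{\mathrm{c}}}$ by applying the first part to $w_{[n-1]}$, and get $l_{S_n}(w)=4l_{S_{n/2}}(w_1)$ by counting inversions blockwise. Two small caveats that do not affect correctness: your parenthetical claim that ascending chessboard elements are automatically even is false (e.g.\ $[-2,1]\in\mathcal{C}_{2,1}$ is ascending), but harmless since your argument via $w_{[n-1]}$ already yields $w\in\firste$; and when forcing adjacency it is cleaner to induct from the top row (equivalently, take a failing column pair whose rows are topmost), as the paper does, rather than on the column index $b$ alone.
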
 Informally,
Lemma~\ref{lem:evenperm-block} states that $w$ is a {}``block
permutation matrix'', composed of $2\times2$ identity matrices.

\begin{proof} Let $w=(w_{ij})$, and recall that $w_{i,j(i)}$ denotes
  the non-zero entry in the $i$-th row. Assume first that
  $w\in\cbo$.  Then $i+j(i)$ is odd for all~$i\in[n]$, so
  $j(1)$ is even. This implies that $w$ has a descent at $j(1)-1$,
  which is impossible since $D(w)$ is even. Thus $w\in\cbe$,
  and $i+j(i)$ is even for all~$i\in[n]$. Suppose that $j(2)\leq
  j(1)-1$ or $j(2)\geq j(1)+3$. Then $i(j(2)-1)\geq3$, so there is a
  descent at $j(2)-1$, contradicting the assumption that $D(w)$ is
  even. Thus $j(2)=j(1)+1$. Continuing the same argument for the
  $2i+1$-th and $2i+2$-th row, for each $i\in[\frac{n}{2}-1]$, we
  obtain
\[
j(2i+2)=j(2i+1)+1,\text{ for all }i\in[\frac{n}{2}-1]_0.
\]
By definition~\eqref{def:sigma} of the map $\sigma$ this means
that~$w_{1}=w_{2}\in S_{n/2}$.

Furthermore, $w$ has a descent at $2a$ if and only if $w_{1}$ has a
descent at $a$, for all~$a\in[n-1]$. Hence $w_{1}\in
S_{n/2}^{(I/2)^{\mathrm{c}}}$.  This implies that $w\in\firstcb$ if
and only if $w\in\firste$. Indeed, if $w_{[n-1]}\in\cb$, then we have
shown that $w_{[n-1]}\in\cbe$, and so $w^{[n-1]}\in\cbe$, and hence
$w\in\firste$. Thus~$\firstcb^{I^{\mathrm{c}}}=\firste^{I^{\mathrm{c}}}$
for all even~$I$.  The statement about the lengths is
clear. \end{proof}

\begin{defn}
\label{def:Odd sandwich}Let $w=(w_{ij})\in B_{n}$. A pair of natural
numbers $(r,h)$, where $r\in[n-2]$ and $h\in[n-1]$ is odd, is said to be an
\emph{odd sandwich in }$w$ if it satisfies one of the following
conditions: \begin{enumerate}

\item \label{Odd-sand-proper}$w_{r,j(r)}=w_{r+h+1,j(r+h+1)}$, and
$w_{r,j(r)}\neq w_{r+i,j(r+i)}$ for all $i\in[h]$,

\item \label{Odd-sand-degen}$r=1$, $w_{1,j(1)}=w_{1+i,j(1+i)}$ for all
  $i\in[h]$, and $w_{1,j(1)}\neq w_{1+h+1,j(1+h+1)}$. \end{enumerate}
We say that $w$ \emph{has an odd sandwich} if there exists an odd
sandwich in $w$. \end{defn} Recall that $s_{0}\in S$ is the Coxeter
generator such that, for any $w\in B_{n}$, the matrix $s_{0}w$ is
obtained by changing the sign of~$w_{1,j(1)}$. Informally speaking,
$w$ has an odd sandwich if and only if in either $w$ or $s_{0}w$ there
exists a row containing a~$1$, followed by an odd number of
consecutive rows containing~$-1$s, followed by a row containing a $1$,
or if there exists a row containing a~$-1$, followed by an odd number
of consecutive rows containing~$1$s, followed by a row containing a
$-1$.

\begin{lem} \label{lem:odd-sandwich} Let $w\in\cbe$. Then
  $w\in\mathcal{M}_{n}$ if and only if $w$ has no odd
  sandwich.\end{lem}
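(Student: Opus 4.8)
The plan is to prove the equivalence $w\in\firstcb \iff w$ has no odd sandwich by analysing the parabolic factorisation $w=w^{[n-1]}w_{[n-1]}$ in terms of the row pattern of $w$. Recall from Definition~\ref{def:chessboard} and the discussion around \eqref{def:row.pattern} that the ascending factor $w^{[n-1]}$ has window $w(1)<\dots<w(n)$ and is determined by its row pattern $\rho_{w^{[n-1]}}$, while $w_{[n-1]}\in S_n$ reorders the columns. Since the chessboard condition is a condition on which squares the nonzero entries occupy, $w\in\firstcb$ exactly means $w^{[n-1]}\in\cb$, i.e.\ that the ascending element $w^{[n-1]}$ is itself a chessboard element; the sign pattern of $w^{[n-1]}$ equals (up to the column permutation $w_{[n-1]}$, which does not change the multiset of signs per row) the sign pattern of $w$. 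So the first step is to reduce the statement to a statement purely about when an \emph{ascending} even chessboard element lies in $\cb$, and to record that the row pattern of $w^{[n-1]}$ is obtained from that of $w$ by a permutation of entries that corresponds to the column permutation.

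The second step is to unwind what it means for an ascending even chessboard element $v = w^{[n-1]}$ to be a chessboard element. Using Lemma~\ref{lem:ascending-struct} (and the informal picture following it), $v$ is built from pairs of adjacent columns $(2j-1,2j)$ of two types: type~(1), a ``sandwich'' of an even number of consecutive negative rows between two positive rows (with a boundary exception when $i(2j-1)=1$); and type~(2), two negative entries in adjacent rows. I would show that $v\in\cb$ forces the parities of the rows $i_v(k)$ to alternate with $k$ in a rigid way, and that this alternation fails precisely when the sign pattern of $v$, read down the rows, has a maximal run of $-1$'s (respectively $1$'s) of \emph{odd} length bracketed by entries of the opposite sign — in case~(2) we get runs of length one or two, and it is the \emph{odd}-length runs (length one between positives, i.e. a single negative row sandwiched by positives, or the boundary case) that obstruct membership in $\cb$. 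Concretely: $v\in\cb$ iff every maximal ``sandwich'' in $v$ has even length and the boundary (degenerate) case does not occur, which by Definition~\ref{def:Odd sandwich} is exactly the statement that $v$ has no odd sandwich. Finally I would observe that $w$ and $w^{[n-1]}$ have an odd sandwich simultaneously: the odd-sandwich condition, as phrased via the row-entry signs $w_{r,j(r)}$ and the degenerate case involving row $1$, is invariant under postcomposition with a permutation of $[n]$ acting on columns, since such a permutation only relabels the columns and the condition refers to rows.

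The main obstacle I expect is the careful bookkeeping in the second step: correctly matching up the two structural cases of Lemma~\ref{lem:ascending-struct} with the two clauses (\ref{Odd-sand-proper}) and (\ref{Odd-sand-degen}) of Definition~\ref{def:Odd sandwich}, and in particular handling the degenerate/boundary case $i(2j-1)=1$ (equivalently $r=1$) where one side of the ``sandwich'' is missing. One has to check that the parity obstruction to $v\in\cb$ is genuinely \emph{only} the odd-length runs and not, say, some interaction between adjacent column-pairs; this is where I would lean on the fact that in an ascending even chessboard element consecutive nonzero rows differ in index by an odd amount (the first assertion of Lemma~\ref{lem:ascending-struct}), so the row-index parity is completely controlled by the cumulative run structure. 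Once that parity dictionary is set up, both implications follow by direct inspection: an odd sandwich produces two rows $r$ and $r+h+1$ (or row $1$ in the degenerate case) forcing $j(r)\equiv j(r+h+1)\bmod 2$ with $h+1$ even, contradicting the alternation that $\cb$-membership would require; conversely, absence of odd sandwiches gives exactly the alternation that lets one two-colour the nonzero squares consistently.

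\begin{proof}
Since the chessboard condition depends only on which squares the nonzero entries of a matrix occupy, and since right multiplication by an element of $(B_n)_{[n-1]}\cong S_n$ permutes the columns, we have $w\in\cb$ iff $w^{[n-1]}\in\cb$; as $w\in\cbe$ this is equivalent to $w\in\firstcb$. Thus it suffices to show, for the ascending even chessboard element $v:=w^{[n-1]}\in\mathcal{C}_{n,0}^{[n-1]}$, that $v\in\cb$ iff $v$ has no odd sandwich, and then to note that $w$ has an odd sandwich iff $v$ does, since permuting the columns leaves unchanged the row-indexed sign sequence $\big(w_{i,j(i)}\big)_{i\in[n]}$ and hence leaves unchanged both conditions of Definition~\ref{def:Odd sandwich}.

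By Lemma~\ref{lem:ascending-struct}, $v$ is assembled from column-pairs $(2j-1,2j)$, $j\in[n/2]$ (with an obvious modification if $n$ is odd, in which case $\firstcb=\firste$ and the last column stands alone with a positive entry in the bottom row), where the $j$-th pair is of one of two types: either $w(2j)>0$, in which case $i(2j-1)<i(2j)$ with $i(2j)-i(2j-1)$ odd and all rows strictly between $i(2j-1)$ and $i(2j)$ carry entries from negatively-signed columns of $v^{-1}$ — informally, a block of positives framing an even number of consecutive negatives, with the framing possibly one-sided only when $i(2j-1)=1$; or $w(2j)<0$, in which case $i(2j-1)-i(2j)=1$, i.e.\ two negatives in adjacent rows. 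Reading the sign sequence $\rho_v(1),\dots,\rho_v(n)$ down the rows, the first assertion of Lemma~\ref{lem:ascending-struct} shows that consecutive nonzero rows differ by an odd index, so the parity of $i$ along the sequence is governed entirely by the lengths of the maximal constant runs of $\rho_v$. One checks directly that $v\in\cb$ iff these parities alternate consistently with the two colour classes, which happens iff every maximal run of $-1$'s that is framed on both sides by $1$'s has \emph{even} length, every maximal run of $1$'s framed on both sides by $-1$'s has even length, and the first run (the ``degenerate'' framing involving row $1$) is likewise of the parity forced by $v\in\cbe$. By Definition~\ref{def:Odd sandwich}, the failure of this condition is precisely the existence of an odd sandwich in $v$: clause~(\ref{Odd-sand-proper}) records a two-sided odd-length run, via the rows $r=i(2j-1)$ and $r+h+1=i(2j)$ with $w_{r,j(r)}=w_{r+h+1,j(r+h+1)}$ and $h$ odd, and clause~(\ref{Odd-sand-degen}) records the degenerate one-sided case. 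Hence $v\in\cb$ iff $v$ has no odd sandwich, and the claim follows.
\end{proof}
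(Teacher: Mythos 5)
Your overall strategy --- reduce to the ascending factor $v=w^{[n-1]}$, using that the odd-sandwich condition depends only on the row pattern and hence is unchanged by the column permutation $w_{[n-1]}$, and then characterise chessboard membership of an ascending element via parities of sign runs --- is the same as the paper's. But two steps do not hold up. First, your justification of the reduction is a false statement: right multiplication by an arbitrary element of $(B_n)_{[n-1]}\cong S_n$ does \emph{not} preserve the chessboard condition (only column permutations preserving column parity do), so ``$w\in\cb$ iff $w^{[n-1]}\in\cb$'' is wrong --- indeed $w\in\cbe\subseteq\cb$ holds by hypothesis, whereas $w^{[n-1]}\in\cb$ holds exactly when $w\in\firstcb$, which is the point at issue. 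The correct and needed statement, $w\in\firstcb$ if and only if $w^{[n-1]}\in\cb$, follows instead from the fact that $\cb$ is a group containing $\cbe$ as a subgroup of index $2$ and $w_{[n-1]}=(w^{[n-1]})^{-1}w$.

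Second, and more seriously, you assert $v:=w^{[n-1]}\in\mathcal{C}_{n,0}^{[n-1]}$ and build the whole ``parity dictionary'' on Lemma~\ref{lem:ascending-struct} applied to $v$. This is false and circular: $w^{[n-1]}$ is ascending but need not lie in $\cb$ at all --- whether it does is precisely what is being characterised --- and Lemma~\ref{lem:ascending-struct} assumes its input is an ascending \emph{even chessboard} element. In the direction you need most ($v\notin\cb$ implies an odd sandwich) that lemma is unavailable, and even when $v\in\cb$ it may lie in $\cbo$. The remaining crux (``one checks directly that $v\in\cb$ iff these parities alternate\dots'') is asserted rather than proved. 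What is actually required --- and what the paper proves directly, without Lemma~\ref{lem:ascending-struct} --- is the elementary observation valid for an \emph{arbitrary} ascending $v$: if rows $r$ and $r+h+1$ carry equal signs and all intermediate rows the opposite sign, then their nonzero entries lie in adjacent columns, $|j_v(r)-j_v(r+h+1)|=1$ (and in the unequal-sign, degenerate case one is forced to have $r=1$); with $h$ odd this gives the parity contradiction in one direction, while conversely a chessboard violation between adjacent columns $j,j+1$ of an ascending $v$ gives $h:=|i_v(j)-i_v(j+1)|-1$ odd and hence a sandwich of type \eqref{Odd-sand-proper} or \eqref{Odd-sand-degen} of Definition~\ref{def:Odd sandwich}. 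Your proof needs this argument (or an equivalent one not presupposing chessboard structure) spelled out; as written, the central step assumes what it should prove.
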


\begin{proof} Write $w=(w_{ij})=w^{[n-1]}w_{[n-1]}$.  Then $w$ has the
  same row pattern as $w^{[n-1]}$;
  cf.~\eqref{def:row.pattern}. Moreover, $w\in\firstcb$ if and only if
  $w^{[n-1]}\in\cb$.  To prove the lemma, it therefore suffices to
  prove that for any $v\in B_n^{[n-1]}$ we have $v\in\cb^{[n-1]}$ if
  and only if $v$ has no odd sandwich.

  Assume that $v\in\cb^{[n-1]}$ and that $v$ has an odd
  sandwich~$(r,h)$.  It is easily seen that the smallest integer
  $i\in[n-1]$ such that $v_{i,j_v(i)}\neq v_{i+1,j_v(i+1)}$ is
  odd. Since $v$ is ascending, we have $|j_v(r)-j_v(r+h+1)|=1$. But
  since $h$ is odd, we have $ r+j_v(r)\not\equiv
  r+h+1+j_v(r+h+1)\bmod{(2)}, $ and so $v\not\in\cb$;
  contradiction. Thus $v\in\mathcal{C}_n^{n-1}$ implies that $v$ does
  not have an odd sandwich.

  Conversely, assume that $v\in B_n^{[n-1]}\setminus\cb$. This means
  that there exists an integer $j\in[n]$, such that
  $i_v(j)+j\not\equiv i_v(j+1)+j+1\bmod{(2)}$. (Informally, the
  non-zero entries in columns $j$ and $j+1$ are on chessboard squares
  of different colours.) In particular, $h:=|i_v(j)-i_v(j+1)|-1$ is
  odd. Let $r:=\min\{i_v(j),i_v(j+1)\}$, so that
  $r+h+1=\max\{i_v(j),i_v(j+1)\}$.  If $v_{r,j(r)}=v_{r+h+1,j(r+h+1)}$
  then, because $v$ is ascending, $v_{r,j(r)}\neq v_{r+s,j(r+s)}$, for
  all $s\in[h]$, so $v$ has an odd sandwich~$(r,h)$. If
  $v_{r,j(r)}\neq v_{r+h+1,j(r+h+1)}$ then, again because $v$ is
  ascending, $r=1$, and so $v$ has an odd sandwich~$(1,h)$. In either
  case $v\in B_n^{[n-1]}\setminus\cb$ implies that $v$ has an odd
  sandwich.
\end{proof}

Let $w\in\cbe\setminus\firstcb$. By Lemma~\ref{lem:odd-sandwich} this
means that~$w$ has an odd sandwich. Let $(r,h)$ be the \emph{topmost
  odd sandwich} in~$w$, that is the unique odd sandwich in $w$ such
that if $(r',h')$ is another odd sandwich in $w$, then $r\leq r'$.  In
the following we define an element $w^{\vee}\in
\cbe\setminus\mathcal{M}_n$ with the property that $L(w)=L(w^{\vee})$,
the positive parts of the descent sets $D(w)$ and $D(w^{\vee})$ agree
and the parities of $l(w)$ and $l(w^{\vee})$ differ.  For this end, we
factorise $w=w^{[n-1]}w_{[n-1]}$ with $w^{[n-1]}\in B_n^{[n-1]}$ and
$w_{[n-1]}\in (B_{n})_{[n-1]}\cong S_n$. Since $w^{[n-1]}$ is
ascending, the non-zero entries in rows $r$ and $r+h+1$ must lie in
adjacent columns; in other words, if $j:=j_{w^{[n-1]}}(r)$ and
$j':=j_{w^{[n-1]}}(r+h+1)$ then
$|j-j'|=1$. Set~$\mu=\min\{j,j'\}\in[n]$.

\begin{defn}\label{def:w.vee}
  Given $w\in\cbe\setminus\firstcb$ with topmost odd sandwich $(r,h)$
  and $\mu$ as above. Set
\[
w^{\vee}=w^{[n-1]}s_{\mu}w_{[n-1]}\in\cbe\setminus\firstcb.
\]
\end{defn}

Informally, $w^{\vee}$ is obtained from $w=w^{[n-1]}w_{[n-1]}$ from
transposing columns $\mu$ and $\mu+1$ in $w^{[n-1]}$ or, equivalently,
transposing rows $\mu$ and $\mu+1$ in $w_{[n-1]}$.  The element
$w^{\vee}$ may also be thought of as obtained from $w$ by
interchanging columns $r$ and~$r+h+1$, deliminating the topmost odd
sandwich in~$w$. Before we prove that the involution $w\mapsto
w^{\vee}$ on $\mathcal{C}_{n,0}\setminus\mathcal{M}_n$ has the desired
properties, we consider an example.

\begin{exa}\label{exa}
  For $n=3$, let
$$w = \left( \begin{matrix} &&-1\\&-1&\\1&&\end{matrix}\right) =
\left( \begin{matrix} &-1&\\-1&&\\&&1\end{matrix}\right)
  \left( \begin{matrix} &1&\\&&1\\1&&\end{matrix}\right) =
  w^{[2]}w_{[2]}\in \mathcal{C}_{3,0}^{\{0,2\}} \setminus
  \mathcal{M}_3,$$ with $l(w) = 5$, $L(w) = 3$ and $D(w) = \{1\}$. The
  unique -- and therefore topmost -- odd sandwich in $w$ is
  $(r,h)=(1,1)$, involving the first and last row. Clearly
  $\mu=\min\{2,3\}=2$, and thus
$$w^{\vee} = \left( \begin{matrix} &-1&\\-1&&\\&&1\end{matrix}\right)
    s_2 \left( \begin{matrix} &1&\\&&1\\1&&\end{matrix}\right) =
    \left( \begin{matrix} -1&&\\&-1&\\&&1\end{matrix}\right),$$ with
      $l(w^{\vee}) = 4$, $L(w^{\vee}) = 3$ and $D(w^{\vee})=\{0,1\}$.
\end{exa}

\begin{lem}\label{lem:Anti-particles}Let
  $w=(w_{ij})\in\cbe\setminus\mathcal{M}_{n}$ with topmost odd
  sandwich $(r,h)$. Then
  $D(w)\setminus\{0\}=D(w^{\vee})\setminus\{0\}$, and if $(r,h)$
  satisfies Definition~\ref{def:Odd sandwich}~\eqref{Odd-sand-proper}
  then $D(w)=D(w^{\vee})$. Moreover,
\[
L(w)=L(w^{\vee}) \quad\text{and}\quad l(w) = l(w^{\vee})\pm1.
\]
 \end{lem}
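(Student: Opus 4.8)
The plan is to study $w^{\vee}$ via its parabolic factorisation with respect to $[n-1]$ and to reduce every assertion to the effect of transposing two columns of $w$. Since $\mu\in[n-1]$ we have $s_{\mu}\in(B_n)_{[n-1]}\cong S_n$, so $w^{\vee}=w^{[n-1]}(s_{\mu}w_{[n-1]})$ is already the parabolic factorisation of $w^{\vee}$; in particular $(w^{\vee})^{[n-1]}=w^{[n-1]}$ and $(w^{\vee})_{[n-1]}=s_{\mu}w_{[n-1]}$. Writing $w^{\vee}=w\cdot t$ with $t=w_{[n-1]}^{-1}s_{\mu}w_{[n-1]}$, the reflection $t$ is the transposition $(c_1\,c_2)$ of $S_n$ for $c_1=w_{[n-1]}^{-1}(\mu)$, $c_2=w_{[n-1]}^{-1}(\mu+1)$, so $w^{\vee}$ is obtained from $w$ by interchanging columns $c_1$ and $c_2$. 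Two observations carry the argument. First, column $c_i$ of $w$ coincides with column $\mu$ (resp.\ $\mu+1$) of the ascending matrix $w^{[n-1]}$, so its non-zero entry lies in row $r$ or $r+h+1$; since $h$ is odd, $r\equiv r+h+1\bmod 2$, and as $w\in\cbe$ this forces $c_1\equiv c_2\bmod 2$, whence $|c_1-c_2|\geq 2$ and $w^{\vee}\in\cbe$ (it has the same row pattern, hence the same topmost odd sandwich, as $w$, so indeed $w^{\vee}\in\cbe\setminus\firstcb$). Secondly, $w(c_1)=w^{[n-1]}(\mu)$ and $w(c_2)=w^{[n-1]}(\mu+1)$ are consecutive in the ordered set of values $\{w(1),\dots,w(n)\}$, again because $w^{[n-1]}$ is ascending.

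For the descent sets I would use that, for $i\in[n-1]$, additivity of $l$ along the factorisation gives $i\in D(w)\iff i\in D(w_{[n-1]})$ and likewise $i\in D(w^{\vee})\iff i\in D(s_{\mu}w_{[n-1]})$. Passing from $w_{[n-1]}$ to $s_{\mu}w_{[n-1]}$ is left multiplication by $s_{\mu}$, i.e.\ interchanging the values $\mu$ and $\mu+1$ in one-line notation; a direct check shows this can alter the right descent set only when $\mu$ and $\mu+1$ occupy adjacent positions, i.e.\ when $|c_1-c_2|=1$, which is ruled out above. Hence $D(w)\setminus\{0\}=D(w^{\vee})\setminus\{0\}$. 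Since $0\in D(u)$ iff the first column of $u$ is negative, and $w,w^{\vee}$ differ only in columns $c_1,c_2$ (whose entries lie in rows $r$ and $r+h+1$), the descent at $0$ is unchanged as soon as those two entries have equal sign; when the topmost odd sandwich is proper this is exactly the condition $w_{r,j(r)}=w_{r+h+1,j(r+h+1)}$ of Definition~\ref{def:Odd sandwich}~\eqref{Odd-sand-proper}, which yields $D(w)=D(w^{\vee})$ in that case.

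For the length, interchanging two columns of an element of $B_n$ whose values are consecutive in $\{w(1),\dots,w(n)\}$ leaves $\coxneg$ and $\coxnsp$ unchanged and alters $\coxinv$ only through the pair of the two transposed positions, hence changes $l$ by exactly $\pm1$, the sign recording whether the swap creates or destroys an inversion; thus $l(w^{\vee})=l(w)\pm1$. For $L$ I would invoke $L(u)=\coxneg(u_1)+l(u)-l(u_1)-l(u_2)$ from Lemma~\ref{lem:L}. Because $c_1\equiv c_2\bmod 2$, the column transposition acts inside exactly one of the two blocks $w_1,w_2$ (and trivially on the other), and it plainly fixes $\coxneg(w_1)$; moreover, in the affected block it again transposes two columns whose rescaled values are consecutive in that block's value set (the rescaling being order preserving), so its length changes by the same $\pm1$ as $l(w^{\vee})-l(w)$. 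Substituting into the formula, the two length increments cancel and $L(w^{\vee})=L(w)$.

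The main difficulty is bookkeeping rather than a single hard idea: one must pin down exactly which two columns of $w$ are exchanged, establish the parity identity $c_1\equiv c_2\bmod 2$ (which rests on $h$ being odd and on $w\in\cbe$), and verify that the ``consecutive values'' property is inherited by the relevant block $w_i$ so that the two length changes cancel. Care is also needed in the descent-at-$0$ step to separate the proper case of Definition~\ref{def:Odd sandwich} from the degenerate one, for which only $D(w)\setminus\{0\}=D(w^{\vee})\setminus\{0\}$ holds.
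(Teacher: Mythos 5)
Your proof is correct, but for the heart of the lemma -- the invariance $L(w)=L(w^{\vee})$ -- you take a genuinely different route from the paper. The paper works with the first formula of Lemma~\ref{lem:L}, $L=a+b+2c$, restricts attention to the rows $r\leq i\leq r+h+1$, and verifies by an explicit case analysis (three positional cases for an auxiliary column $j$ relative to $j_{\mathrm{min}}$ and $j_{\mathrm{max}}$, each split according to signs) that the contributions to $b+2c$ are unchanged; moreover it carries this out in detail only for the proper sandwich of Definition~\ref{def:Odd sandwich}~\eqref{Odd-sand-proper}, omitting the degenerate case. You instead realise $w^{\vee}$ as $w$ with two columns $c_1,c_2$ interchanged, prove $c_1\equiv c_2\bmod 2$ (from $h$ odd and $w\in\cbe$) and that the two affected values are consecutive in the value set (from $w^{[n-1]}$ ascending), and then feed this into the second formula of Lemma~\ref{lem:L}~\eqref{enu:L(2)}, $L(u)=\coxneg(u_1)+l(u)-l(u_1)-l(u_2)$: the swap touches only one $\sigma$-block, fixes $\coxneg(w_1)$, and the length increments of $w$ and of the affected block are both a single inversion flip of the same sign (consecutiveness gives ``exactly $\pm1$'', order-preservation of the block rescaling gives the matching sign), so they cancel. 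Your treatment of the descent sets and of $l(w^{\vee})=l(w)\pm1$ matches the paper's in substance (the paper gets the length statement slightly faster from additivity \eqref{equ:add.length} plus $l(s_\mu w_{[n-1]})=l(w_{[n-1]})\pm1$, whereas you recompute it via $\coxinv,\coxneg,\coxnsp$). What your approach buys is the elimination of the case-by-case bookkeeping and a uniform treatment of both sandwich types; what it costs is that it leans on Lemma~\ref{lem:L}~\eqref{enu:L(2)} and on the slightly delicate observations about parity of $c_1,c_2$, consecutiveness of the swapped values, and sign-compatibility of the block rescaling -- all of which you do state and which do hold.
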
 \begin{proof} We first prove the statements about the descent
types. Let $w^{\vee}=w^{[n-1]}s_{\mu}w_{[n-1]}$ as above. Since
$w_{[n-1]}^{-1}(\mu)\equiv w_{[n-1]}^{-1}(\mu+1)\bmod{(2)}$ the
non-zero entries of $w_{[n-1]}$ in rows $\mu$ and $\mu+1$ are not in
adjacent columns. Thus, transposing rows $\mu$ and $\mu+1$ does not
change the descent type of $w_{[n-1]}$, that is,
$D(w_{[n-1]})=D(s_{\mu}w_{[n-1]})$.  For any element $u\in B_{n}$, we
have $D(u)\setminus\{0\}=D(u_{[n-1]})$.  Since
$(w^{\vee})_{[n-1]}=s_{\mu}w_{[n-1]}$, we get
$D(w)\setminus\{0\}=D(w_{[n-1]})=D(s_{\mu}w_{[n-1]})=D(w^{\vee})\setminus\{0\}$.
If $(r,h)$ satisfies Definition~\ref{def:Odd
  sandwich}~\eqref{Odd-sand-proper} then
$w_{r,j(r)}=w_{r+h+1,j(r+h+1)}$ so $0\in D(w)$ if and only if $0\in
D(w^{\vee})$ and thus $D(w)=D(w^{\vee})$.

We now prove that $L(w)=L(w^{\vee})$. Let
$j_{\text{min}}:=\min\{j(r),j(r+h+1)\}$ and
$j_{\text{max}}:=\max\{j(r),j(r+h+1)\}$. Then $j_{\text{min}}\equiv
j_{\text{max}}\bmod{(2)}$, since $w\in\cbe$ and
$|i(j_{\text{max}})-i(j_{\text{min}})|=h+1$ is even. We write
$w=(w_{ij})$ and $w^\vee=(w^\vee_{ij})$, where $i,j,\in[n]$. Consider
the $(h+2)\times n$-submatrices
\begin{equation*}
v :=(w_{ij}) \textrm{ and } v^{\vee} :=(w_{ij}^{\vee}), \textrm{ where }r \leq i
\leq r+h+1,\quad j\in[n].
\end{equation*}
Recall that $v$ and $v^{\vee}$ are obtained from one another by
interchanging their first and last rows.  Using the fact that $L =
a+b+2c$ (cf.~Lemma~\ref{lem:L}~\eqref{enu:L(1)}), and noting that $w$
and $w^\vee$ coincide outside of the rows $i$ such that $r\leq i \leq
r+h+1$, we see that in order to prove that $L(w)=L(w^{\vee})$, it is
sufficient to prove that $(a+b+2c)(v)=(a+b+2c)(v^{\vee})$. To prove
the latter it is sufficient to show that for any column $j$ in $v$,
the contribution to $L$ from the three columns $j$,$j_{\text{min}}$,
\foreignlanguage{english}{$j_{\text{max}}$ in $v$ is equal to the
}contribution to $L$ from the three columns $j$,$j_{\text{min}}$,
\foreignlanguage{english}{$j_{\text{max}}$ in $v^{\vee}$. As
  $L=a+b+2c$ and $a(v)=a(v^{\vee})$, it is enough to consider the
  contribution to $b$ and $c$. Let $j$ be a non-zero column in $v$,
  that is $j\in[n]$ such that~$r\leq i(j)\leq r+h+1$. }Since
\foreignlanguage{english}{we only need to consider the contribution to
  $b$ and $c$ from the }columns $j$,$j_{\text{min}}$,
\foreignlanguage{english}{$j_{\text{max}}$, we may assume that
  $j\not\equiv j_{\mathrm{min}}\bmod{(2)}$, which is equivalent to
  $j\not\equiv j_{\mathrm{max}}\bmod{(2)}$, since
  $j_{\mathrm{min}}\equiv j_{\mathrm{max}}\bmod{(2)}$. There are then
  three possible cases: }$j<j_{\mathrm{min}}$,
$j_{\mathrm{min}}<j<j_{\mathrm{max}}$, and $j_{\mathrm{max}}<j$,
respectively. In the sequel we consider only the case that $w_{r,j(r)}
= w_{r+h+1,j(r+h+1)}$ (cf.\ Definition~\ref{def:Odd
  sandwich}~\eqref{Odd-sand-proper}), omitting similar arguments for
the case that $w_{r,j(r)}\neq w_{r+h+1,j(r+h+1)}$ (cf.\
Definition~\ref{def:Odd sandwich}~\eqref{Odd-sand-degen}).

Consider the first case, $j<j_{\mathrm{min}}$. Suppose that
$w_{i(j_{\mathrm{min}}),j_{\mathrm{min}}}=w_{i(j_{\mathrm{max}}),j_{\mathrm{max}
}}=1$.
If $i(j_{\mathrm{min}})<i(j_{\mathrm{max}})$, the total contribution
to $b+2c$ from the column pairs $(j,j_{\mathrm{min}})$ and
$(j,j_{\mathrm{max}})$ is $1$. If
$i(j_{\mathrm{min}})>i(j_{\mathrm{max}})$, the total contribution to
$b+2c$ from the column pairs $(j,j_{\mathrm{min}})$ and
$(j,j_{\mathrm{max}})$ is also $1$. Suppose on the other hand that
$w_{i(j_{\mathrm{min}}),j_{\mathrm{min}}}=w_{i(j_{\mathrm{max}}),j_{\mathrm{max}
}}=-1$.
If $i(j_{\mathrm{min}})<i(j_{\mathrm{max}})$, the total contribution
to $b+2c$ from the column pairs $(j,j_{\mathrm{min}})$ and
$(j,j_{\mathrm{max}})$ is $3$. If
$i(j_{\mathrm{min}})>i(j_{\mathrm{max}})$, the total contribution to
$b+2c$ from the column pairs $(j,j_{\mathrm{min}})$ and
$(j,j_{\mathrm{max}})$ is also $3$. Thus $(a+b+2c)(v)=(a+b+2c)(v^{\vee})$ in
the first case.\par Next, consider the second case,
$j_{\mathrm{min}}<j<j_{\mathrm{max}}$.  Suppose that
$w_{i(j_{\mathrm{min}}),j_{\mathrm{min}}}=w_{i(j_{\mathrm{max}}),j_{\mathrm{max}
}}=1$.
If $i(j_{\mathrm{min}})<i(j_{\mathrm{max}})$, the total contribution
to $b+2c$ from the column pairs $(j_{\mathrm{min}},j)$ and
$(j,j_{\mathrm{max}})$ is $2$. If
$i(j_{\mathrm{min}})>i(j_{\mathrm{max}})$, the total contribution to
$b+2c$ from the column pairs $(j_{\mathrm{min}},j)$ and
$(j,j_{\mathrm{max}})$ is also $2$. Suppose on the other hand that
$w_{i(j_{\mathrm{min}}),j_{\mathrm{min}}}=w_{i(j_{\mathrm{max}}),j_{\mathrm{max}
}}=-1$.
If $i(j_{\mathrm{min}})<i(j_{\mathrm{max}})$, the total contribution
to $b+2c$ from the column pairs $(j_{\mathrm{min}},j)$ and
$(j,j_{\mathrm{max}})$ is $2$. If
$i(j_{\mathrm{min}})>i(j_{\mathrm{max}})$, the total contribution to
$b+2c$ from the column pairs $(j_{\mathrm{min}},j)$ and
$(j,j_{\mathrm{max}})$ is also $2$. Thus $(a+b+2c)(v)=(a+b+2c)(v^{\vee})$ in the
second case.\par Finally, consider the third case,
$j_{\mathrm{max}}<j$.  Suppose that
$w_{i(j_{\mathrm{min}}),j_{\mathrm{min}}}=w_{i(j_{\mathrm{max}}),j_{\mathrm{max}
}}=1$.
If $i(j_{\mathrm{min}})<i(j_{\mathrm{max}})$, the total contribution
to $b+2c$ from the column pairs $(j_{\mathrm{min}},j)$ and
$(j_{\mathrm{max}},j)$ is $1$. If
$i(j_{\mathrm{min}})>i(j_{\mathrm{max}})$, the total contribution to
$b+2c$ from the column pairs $(j_{\mathrm{min}},j)$ and
$(j_{\mathrm{max}},j)$ is also $1$. Suppose on the other hand that
$w_{i(j_{\mathrm{min}}),j_{\mathrm{min}}}=w_{i(j_{\mathrm{max}}),j_{\mathrm{max}
}}=-1$.
If $i(j_{\mathrm{min}})<i(j_{\mathrm{max}})$, the total contribution
to $b+2c$ from the column pairs $(j_{\mathrm{min}},j)$ and
$(j_{\mathrm{max}},j)$ is $2$. If
$i(j_{\mathrm{min}})<i(j_{\mathrm{max}})$, the total contribution to
$b+2c$ from the column pairs $(j_{\mathrm{min}},j)$ and
$(j_{\mathrm{max}},j)$ is $1$. If
$i(j_{\mathrm{min}})>i(j_{\mathrm{max}})$, the total contribution to
$b+2c$ from the column pairs $(j_{\mathrm{min}},j)$ and
$(j_{\mathrm{max}},j)$ is also $2$. Thus $(a+b+2c)(v)=(a+b+2c)(v^{\vee})$ in the
third case.\par


To finish the proof of the lemma, recall from~\eqref{equ:add.length}
that for any $g\in B_{n}$, and any $s\in S$, we have
$l(g)=l(g^{[n-1]})+l(g_{[n-1]})$, and $l(sg)=l(g)\pm1$. Thus
$$l(w^{\vee})=l(w^{[n-1]})+l(s_{\mu}w_{[n-1]})=l(w^{[n-1]})+l(w_{[n-1]}
)\pm1=l(w)\pm1.$$\end{proof}

\begin{cor} \label{cor:Monochrome-supp} Let $n\in\N$ and
  $I\subseteq[n-1]_{0}$. Then
\begin{equation}\label{equ:monochrome}
\sum_{w\in B_{n}^{(I_{0})^{\mathrm{c}}}}(-1)^{l(w)}X^{L(w)}=\sum_{w\in\firstcb^{
(I_{0})^{ \mathrm{c}}}}(-1)^{l(w)}X^{L(w)}.
\end{equation}
Assume that either $n$ is odd or both $n$ and $I\subseteq[n-1]_{0}$
are even. Then
\[
\sum_{w\in B_{n}^{(I_0)^{\mathrm{c}}}}(-1)^{l(w)}X^{L(w)}=\sum_{w\in\firste^{
(I_0)^{\mathrm { c } }}}(-1)^{l(w)}X^{L(w)}.
\]
 \end{cor}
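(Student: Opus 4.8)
The plan is to establish \eqref{equ:monochrome} by combining the sign reversing involution $*$ from the proof of Lemma~\ref{lem:cb-supp} with the involution $w\mapsto w^{\vee}$ of Definition~\ref{def:w.vee}, and then to read off the second assertion from Lemmas~\ref{lem:odd-sandwich} and~\ref{lem:evenperm-block}. Recall that $I_{0}=I\cup\{0\}$, so that $0\in I_{0}$; the key consequence I shall use is that $w\in B_{n}^{(I_{0})^{\mathrm{c}}}$ if and only if $D(w)\setminus\{0\}\subseteq I_{0}$, that is, the descent class $B_{n}^{(I_{0})^{\mathrm{c}}}$ imposes no condition on whether $0\in D(w)$.

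First I would note that the involution $w\mapsto w^{*}=s_{i}w$ built in the proof of Lemma~\ref{lem:cb-supp} has $D(w)=D(w^{*})$, hence maps $B_{n}^{(I_{0})^{\mathrm{c}}}\setminus\cbe$ to itself, while preserving $L$ and flipping the parity of $l$; exactly as in that proof one gets
\[
\sum_{w\in B_{n}^{(I_{0})^{\mathrm{c}}}}(-1)^{l(w)}X^{L(w)}=\sum_{w\in\cbe^{(I_{0})^{\mathrm{c}}}}(-1)^{l(w)}X^{L(w)}.
\]
It then remains to show that the elements of $\cbe^{(I_{0})^{\mathrm{c}}}\setminus\firstcb$ contribute zero. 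For this I would invoke $w\mapsto w^{\vee}$: if $w\in\cbe\setminus\firstcb$ then $w$ has an odd sandwich by Lemma~\ref{lem:odd-sandwich}, so its topmost odd sandwich, and hence $w^{\vee}$, are defined; as $(w^{\vee})^{[n-1]}=w^{[n-1]}$ (by uniqueness in~\eqref{equ:factorisation}) the elements $w$ and $w^{\vee}$ share the same row pattern, hence the same odd sandwiches, the same topmost one and the same index $\mu$, whence $(w^{\vee})^{\vee}=w$ and $w\mapsto w^{\vee}$ is a fixed-point-free involution of $\cbe\setminus\firstcb$. By Lemma~\ref{lem:Anti-particles} one has $L(w)=L(w^{\vee})$, $l(w)\equiv l(w^{\vee})+1\bmod{(2)}$ and $D(w)\setminus\{0\}=D(w^{\vee})\setminus\{0\}$, so by the reformulation above $w\in\cbe^{(I_{0})^{\mathrm{c}}}$ if and only if $w^{\vee}\in\cbe^{(I_{0})^{\mathrm{c}}}$. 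Thus $w\mapsto w^{\vee}$ restricts to a fixed-point-free involution of $\cbe^{(I_{0})^{\mathrm{c}}}\setminus\firstcb$ pairing each $w$ with a distinct $w^{\vee}$ for which $(-1)^{l(w)}X^{L(w)}+(-1)^{l(w^{\vee})}X^{L(w^{\vee})}=0$, and summing over orbit representatives yields~\eqref{equ:monochrome}.

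For the second assertion it suffices, by~\eqref{equ:monochrome}, to prove $\firstcb^{(I_{0})^{\mathrm{c}}}=\firste^{(I_{0})^{\mathrm{c}}}$ under the stated hypotheses. When $n$ is odd this is immediate, since $\firstcb=\firste$ in that case. When $n$ and $I$ are even, $I_{0}=I\cup\{0\}\subseteq[n-1]_{0}\cap 2\Z$ is again even while $n$ is even, so Lemma~\ref{lem:evenperm-block} applied with $I_{0}$ in place of $I$ gives $\firstcb^{(I_{0})^{\mathrm{c}}}=\firste^{(I_{0})^{\mathrm{c}}}$, and the claim follows.

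All of the genuine work has been done in the earlier lemmas, so I do not expect a single hard step; the point that needs the most care is the verification that both involutions preserve the descent class $B_{n}^{(I_{0})^{\mathrm{c}}}$. For $*$ this is automatic, since it fixes the entire descent set, but for $w\mapsto w^{\vee}$ Lemma~\ref{lem:Anti-particles} only controls $D(w)$ up to the element $0$ --- full equality $D(w)=D(w^{\vee})$ holds only when the topmost odd sandwich is of type~\eqref{Odd-sand-proper} --- and it is exactly this that dictates formulating the corollary for $I_{0}\ni 0$ rather than for an arbitrary $I$. A secondary, more routine point is the well-definedness and involutivity of $w\mapsto w^{\vee}$, which hinges on the observation that the defining data (topmost odd sandwich and index $\mu$) depend only on the ascending part $w^{[n-1]}$, and this is unchanged by the operation.
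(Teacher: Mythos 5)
Your proposal is correct and follows essentially the same route as the paper's proof: reduce to $\cbe^{(I_0)^{\mathrm{c}}}$ via Lemma~\ref{lem:cb-supp}, cancel the elements of $\cbe^{(I_0)^{\mathrm{c}}}\setminus\firstcb$ using the involution $w\mapsto w^{\vee}$ together with Lemma~\ref{lem:Anti-particles} (the equality $D(w)\setminus\{0\}=D(w^{\vee})\setminus\{0\}$ being exactly why one works with $I_0\ni 0$), and then identify $\firstcb^{(I_0)^{\mathrm{c}}}=\firste^{(I_0)^{\mathrm{c}}}$ via Lemma~\ref{lem:evenperm-block} when $n$ and $I$ are even, or via $\firstcb=\firste$ when $n$ is odd. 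Your explicit check that $\vee$ is a fixed-point-free involution (because the topmost odd sandwich and $\mu$ depend only on $w^{[n-1]}$, which is unchanged) is a detail the paper leaves implicit, and it is verified correctly.
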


 \begin{proof} Without loss of generality we may assume that $0\in I$,
   so that $I=I_0$. By Lemma~\ref{lem:cb-supp} the sum over
   $B_{n}^{I^{\mathrm{c}}}$ is supported
   on~$\cbe^{I^{\mathrm{c}}}$. Lemma~\ref{lem:Anti-particles} asserts
   that for every $w\in\cbe^{I^{\mathrm{c}}}\setminus\firstcb$ there
   exists a unique $w^{\vee}\in\cbe^{I^{\mathrm{c}}}\setminus\firstcb$
   such that
   $(-1)^{l(w)}X^{L(w)}+(-1)^{l(w^{\vee})}X^{L(w^{\vee})}=0$. Moreover,
   $D(w)\setminus\{0\}=D(w^{\vee})\setminus\{0\}$, so $w\in
   B_{n}^{I^{\mathrm{c}}}$ if and only if $w^{\vee}\in
   B_{n}^{I^{\mathrm{c}}}$.  Hence the sum over
   $B_{n}^{I^{\mathrm{c}}}$ is supported on
   $\firstcb^{I^{\mathrm{c}}}$.

   When $n$ is even and $I\subseteq[n-1]_{0}$ is even,
   Lemma~\ref{lem:evenperm-block} states that
   $\firstcb^{I^{\mathrm{c}}}=\firste^{I^{\mathrm{c}}}$, whence the
   second equality. When $n$ is odd, it follows from the first,
   as~$\firstcb=\firste$.
\end{proof}

\begin{rem}
Example~\ref{exa} illustrates that the sign reversing involution
$\vee$ on $\mathcal{C}_{n,0}\setminus\mathcal{M}_n$ does not, in
general, preserve the descent type. This is in contrast to the
involution~$*$ defined in the proof of Lemma~\ref{lem:cb-supp}. The
weaker statement~\eqref{equ:monochrome} is, however, sufficient for
our application in the proof of Proposition~\ref{pro:Even}.
\end{rem}

\subsection{A first additivity result for $L$}\label{subsec:first}

We now consider how the statistic $L$ behaves with respect to the
parabolic factorisation $w=w^{[n-1]}w_{[n-1]}$. For an arbitrary
element $w\in\cbe$, it is not necessarily true that $L$ is additive
with respect to this factorisation, that
is~$L(w)=L(w^{[n-1]})+L(w_{[n-1]})$.  A counter-example is given by
\[
w=\begin{pmatrix}1\\
 & -1
\end{pmatrix}=\begin{pmatrix} & 1\\
-1
\end{pmatrix}\begin{pmatrix} & 1\\
1
\end{pmatrix}=w^{[1]}w_{[1]}\in\mathcal{M}_{2}\setminus \mathcal{E}_2,
\]
 where $L(w)=2$, $L(w^{[1]})=2$ and~$L(w_{[1]})=1$.

 The following result shows that the situation improves when we assume
 that~$w\in\firste$.
\begin{prop} \label{prop:first}Suppose that $w\in\firste$. Then
\[
L(w)=L(w^{[n-1]})+L(w_{[n-1]}).
\]
 \end{prop}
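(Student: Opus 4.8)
The plan is to use the characterisation $L = \coxneg(w_1) + l(w) - l(w_1) - l(w_2)$ from Lemma~\ref{lem:L}~\eqref{enu:L(2)}, together with the additivity of the Coxeter length function with respect to the parabolic factorisation $w = w^{[n-1]}w_{[n-1]}$ (see~\eqref{equ:add.length}), and reduce the claimed identity for $L$ to three statements about the map $\sigma$ and about $\coxneg$: first, that $\sigma$ is ``multiplicative'' on $\firste$ in the sense that if $w = w^{[n-1]}w_{[n-1]}$ with $w^{[n-1]}, w_{[n-1]} \in \cbe$ and $\sigma(w) = (w_1,w_2)$, $\sigma(w^{[n-1]}) = (v_1,v_2)$, $\sigma(w_{[n-1]}) = (u_1,u_2)$, then $w_r = v_r u_r$ for $r\in\{1,2\}$; second, that this factorisation of $w_r$ is again a parabolic factorisation, i.e.\ $v_r \in B_{m_r}^{[m_r-1]}$ is ascending and $u_r \in (B_{m_r})_{[m_r-1]} \cong S_{m_r}$, so that $l(w_r) = l(v_r) + l(u_r)$; and third, that $\coxneg(w_1) = \coxneg(v_1) + \coxneg(u_1) = \coxneg(v_1)$, using that $u_1$ lies in the symmetric group and so has no negative entries. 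Granting these, one computes
\[
L(w) = \coxneg(w_1) + l(w) - l(w_1) - l(w_2),
\]
and substituting $l(w) = l(w^{[n-1]}) + l(w_{[n-1]})$, $l(w_r) = l(v_r) + l(u_r)$, and $\coxneg(w_1) = \coxneg(v_1)$, one regroups the terms as
\[
L(w) = \bigl(\coxneg(v_1) + l(w^{[n-1]}) - l(v_1) - l(v_2)\bigr) + \bigl(l(w_{[n-1]}) - l(u_1) - l(u_2)\bigr).
\]
The first bracket is $L(w^{[n-1]})$ by Lemma~\ref{lem:L}~\eqref{enu:L(2)} applied to $w^{[n-1]}$. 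For the second bracket, note $w_{[n-1]} \in \cbe \cap (B_n)_{[n-1]}$, so its $\sigma$-components $u_1, u_2$ lie in symmetric groups and hence have $\coxneg(u_1) = 0$; thus Lemma~\ref{lem:L}~\eqref{enu:L(2)} applied to $w_{[n-1]}$ gives exactly $L(w_{[n-1]}) = l(w_{[n-1]}) - l(u_1) - l(u_2)$, and we conclude $L(w) = L(w^{[n-1]}) + L(w_{[n-1]})$.

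The key steps, in order, are: (1) establish the multiplicativity $\sigma(w) = \sigma(w^{[n-1]})\cdot\sigma(w_{[n-1]})$ componentwise on $\firste$ — this should follow from unwinding Definition~\ref{def:sigma}, using that for $w \in \cbe$ the row-selection used to form $w_1, w_2$ is compatible with matrix multiplication because $\sigma_0$ is the group isomorphism $\cbe \xrightarrow{\sim} B_{m_1}\times B_{m_2}$ and, by definition of $\firste$, both factors $w^{[n-1]}, w_{[n-1]}$ already lie in $\cbe$ where $\sigma = \sigma_0$ is a homomorphism; (2) check that the induced factorisation $w_r = v_r u_r$ is the parabolic factorisation of $w_r$, i.e.\ $v_r$ is ascending and $u_r$ is a permutation matrix — here one uses that $w^{[n-1]}$ ascending forces its odd/even subcolumn patterns $v_1, v_2$ to be ascending, and that $w_{[n-1]}$ a permutation forces $u_1, u_2$ to be permutations, together with uniqueness of parabolic factorisations; (3) assemble the length and $\coxneg$ bookkeeping as above.

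The main obstacle will be step~(1)–(2): verifying that $\sigma$ respects the parabolic factorisation, i.e.\ that $(w^{[n-1]})_r$ is exactly the ascending part of $w_r$ and $(w_{[n-1]})_r$ is exactly the $S_{m_r}$-part. The subtlety is that $\sigma$ on all of $B_n$ is only a map of sets, not a homomorphism; it is a homomorphism only on $\cbe$, and it is precisely the defining property of $\firste$ that both parabolic factors land in $\cbe$, which is what makes the argument go through. One must also confirm that ``ascending'' for $w^{[n-1]}$ transfers to ``ascending'' for each $(w^{[n-1]})_r$ under the column-selection defining $\sigma$ — this is immediate since selecting a subset of columns (and the corresponding rows) of a matrix whose non-zero entries occur in increasing row order again has non-zero entries in increasing row order. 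Once these structural facts are in place, the identity follows by the straightforward substitution indicated above.
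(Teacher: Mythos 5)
Your proposal is correct and takes essentially the same route as the paper: both exploit that $\sigma_0$ is a homomorphism on $\cbe$ (which is exactly what $w\in\firste$ buys) to identify the $\sigma$-components of $w^{[n-1]}$ and $w_{[n-1]}$ with the ascending and symmetric-group parabolic factors of $w_1,w_2$, and then apply Lemma~\ref{lem:L}~\eqref{enu:L(2)} to $w$, $w^{[n-1]}$ and $w_{[n-1]}$ together with the additivity \eqref{equ:add.length} of $l$ and the invariance of $\coxneg$ under right multiplication by permutations. One small repair: an ascending element of $B_n$ need not have its non-zero entries in increasing row order (e.g.\ $[-3,-1,2]$), so your ``immediate'' justification that $v_1,v_2$ are ascending should instead observe that passing to a $\sigma$-component preserves the signs and the relative order of the absolute values of the selected entries (or argue, as the paper does, via row patterns and the fact that a descent in a component would lift to a descent in $w^{[n-1]}$).
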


 \begin{proof} Since $w\in\firste$, we have
   $w^{[n-1]},w_{[n-1]}\in\cbe$.  Let $w=w_{1}*w_{2}$ and
   $w^{[n-1]}=(w^{[n-1]})_{1}*(w^{[n-1]})_{2}$. We claim that
\begin{equation}
w^{[n-1]}=(w_{1})^{[n-1]}*(w_{2})^{[n-1]},\label{eq:(ascending)}
\end{equation}
that is $(w^{[n-1]})_{1}=(w_{1})^{[n-1]}$ and
$(w^{[n-1]})_{2}=(w_{2})^{[n-1]}$.  Indeed, the ascending matrix
$w^{[n-1]}$ is obtained from $w$ by a permutation of columns, and
since both $w^{[n-1]}$ and $w$ are chessboard elements, each column of
$w$ is moved an even amount to obtain the corresponding column
of~$w^{[n-1]}$. Clearly, $w^{[n-1]}$ has the same row pattern as $w$;
cf.~\eqref{def:row.pattern}.  For any $v\in\cbe$, with
$v=v_{1}*v_{2}$, every non-zero entry at $(i,j)$ is either equal to
the entry at $(\frac{i+1}{2},\frac{j+1}{2})$ in $v_{1}$, if $i$ (and
therefore $j$) is odd, or is equal to the entry at
$(\frac{i}{2},\frac{j}{2})$ in $v_{2}$, if $i$ (and therefore $j$) is
even. The row pattern of $(w^{[n-1]})_{1}$ is therefore the same as
that of $w_{1}$, and the row pattern of $(w^{[n-1]})_{2}$ is the same
as that of $w_{2}$. Any descent in $(w^{[n-1]})_{1}$ or
$(w^{[n-1]})_{2}$ would give rise to a descent in $w^{[n-1]}$, so the
matrices $(w^{[n-1]})_{1}$ and $(w^{[n-1]})_{2}$ must be
ascending. Thus $(w^{[n-1]})_{1}=(w_{1})^{[n-1]}$ and
$(w^{[n-1]})_{2}=(w_{2})^{[n-1]}$,
establishing~\eqref{eq:(ascending)}.\par In a similar way, we let
$w_{[n-1]}=(w_{[n-1]})_{1}*(w_{[n-1]})_{2}$, and we claim that
\begin{equation}
w_{[n-1]}=(w_{1})_{[n-1]}*(w_{2})_{[n-1]},\label{eq:(permutation)}
\end{equation}
 that is $(w_{[n-1]})_{1}=(w_{1})_{[n-1]}$ and
$(w_{[n-1]})_{2}=(w_{2})_{[n-1]}$.
Indeed, $\sigma_{\mathrm{0}}$ is a homomorphism, and so
\begin{multline*}
(w_{1},w_{2}) =\sigma_{0}(w)
  =\sigma_{0}(w^{[n-1]}w_{[n-1]})=\sigma_{0}(w^{[n-1]})\sigma_{0}(w_{[n-1]})\\
  =((w^{[n-1]})_{1}(w_{[n-1]})_{1},(w^{[n-1]})_{2}(w_{[n-1]})_{2}),
\end{multline*}
and thus $w_{1}=(w^{[n-1]})_{1}(w_{[n-1]})_{1}$ and
$w_{2}=(w^{[n-1]})_{2}(w_{[n-1]})_{2}$.  Using \eqref{eq:(ascending)}
we obtain
\[
w_{1} = (w_{1})^{[n-1]} (w_{1})_{[n-1]} = (w^{[n-1]})_{1}
(w_{1})_{[n-1]}=(w^{[n-1]})_{1}(w_{[n-1]})_{1},
\]
whence $(w_{[n-1]})_{1}=(w_{1})_{[n-1]}$. The equality
$(w_{[n-1]})_{2}=(w_{2})_{[n-1]}$ is proved in the same way. This
proves \eqref{eq:(permutation)}.\par

Recall that $l(w)=l(w^{[n-1]})+l(w_{[n-1]})$;
see~\eqref{equ:add.length}.  Since $\coxinv+\coxnsp=l-\coxneg$
(see~\eqref{equ:length}) and
$\coxneg(w)=\coxneg(w^{[n-1]})+\coxneg(w_{[n-1]}) =
\coxneg(w^{[n-1]})$, this implies that
\[
(\coxinv+\coxnsp)(w)=(\coxinv+\coxnsp)(w^{[n-1]})+(\coxinv+\coxnsp)(w_{[n-1]}).
\]
Lemma~\ref{lem:L}~\eqref{enu:L(2)} and the
equalities \eqref{eq:(ascending)} and \eqref{eq:(permutation)} now
imply that
\begin{multline*}
{L(w^{[n-1]})+L(w_{[n-1]})}=\\\coxneg((w_{1})^{[n-1]})+(\coxinv+\coxnsp)(w^{[n-1]})
-(\coxinv+\coxnsp)((w_{1})^{[n-1]})-(\coxinv+\coxnsp)((w_{2})^{[n-1]})\\ +\coxneg((w_{1})_{[n-1]})+(\coxinv+\coxnsp)(w_{[n-1]})-(\coxinv+\coxnsp)((w_{1})_{[n-1]})-(\coxinv+\coxnsp)((w_{2})_{[n-1]})\\ =\coxneg(w_{1})+(\coxinv+\coxnsp)(w)-(\coxinv+\coxnsp)(w_{1}
)-(\coxinv+\coxnsp)(w_{2})=L(w).
\end{multline*}
\end{proof}

\subsection{A second additivity result for $L$}\label{subsec:second}
We now consider how the statistic $L$ behaves with respect to
parabolic factorisations of the form $w =w^{[i-1]_0}w_{[i-1]_0}$,
where $i\in[n-1]$. Even if $w\in\firste$, it is not necessarily true
that $L(w) = L(w^{[i-1]_0}) + L(w_{[i-1]_0})$. A counter-example is
given by $i=2$ and $w=[-5,2,1,-4,3]\in\mathcal{E}_{5}$.  Here
$D(w)=\{0,2,3\}$ and $L(w)=7$.  But $L(w^{\{0,1\}}) = L([2,5,1,-4,3])
= 6$ and $L(w_{\{0,1\}}) = L([-2,1,3,4,5]) = 2$.

The following result establishes additivity of $L$ under this kind of
parabolic factorisation under additional conditions.

\begin{prop}\label{prop:even second}
 Suppose that $n$ is even and $w\in\firste$ has even descent
 type~$D(w)$. Let $\imin\in [n-1]$ be an even integer such that
 $\imin\leq \min\{(D(w)\cup \{n\})\setminus\{0\}\}$, that is $w(1) <
 \dots < w(\imin)$.  Then
\[
L(w)=L(w^{[\imin-1]_{0}})+L(w_{[\imin-1]_{0}}).
\]
 \end{prop}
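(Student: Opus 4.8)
The plan is to mimic the proof of Proposition~\ref{prop:first}, transporting everything through the map $\sigma$ of~\eqref{def:sigma} and the formula for $L$ in Lemma~\ref{lem:L}~\eqref{enu:L(2)}. Write $\imin = 2d$; since $\imin \in [n-1]$ is even we have $d \in [n/2-1]$. Put $u := w^{[\imin-1]_0}$ and $v := w_{[\imin-1]_0}$, where $(B_n)_{[\imin-1]_0} = \langle s_0,\dots,s_{\imin-1}\rangle$ is identified with $B_\imin$ acting on the first $\imin$ coordinates; thus $w = uv$. First I would record the relevant structure. As $\imin$ is even, $v$ fixes every position $> \imin$, so $u(j) = w(j)$ for $j > \imin$; and since $u$ is the minimal-length representative of the coset $w(B_n)_{[\imin-1]_0}$, the tuple $(u(1),\dots,u(\imin))$ is the increasing rearrangement of $\{|w(1)|,\dots,|w(\imin)|\}$, in particular with positive entries. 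The hypothesis says precisely that $w(1) < \dots < w(\imin)$; since $u$ maps $\{1,\dots,\imin\}$ increasingly onto $\{|w(1)|,\dots,|w(\imin)|\}$, this forces $v(j)$ to be the signed rank of $w(j)$ in the chain $w(1)<\dots<w(\imin)$ and makes $v$ \emph{ascending} as an element of $B_\imin$. Finally, by Lemma~\ref{lem:evenperm-block} applied to $w_{[n-1]}$ (which is a chessboard element of $S_n$ with even descent set, as $w\in\firste$ has even descent type and $n$ is even), $w_{[n-1]}$ is the permutation of the $n/2$ dominoes $\{2t-1,2t\}$ attached to some $\pi\in S_{n/2}$; the positive descents of $w$ are then $2D(\pi)$, so the bound on $\imin$ becomes $d\leq\min(D(\pi)\cup\{n/2\})$, i.e.\ $\pi(1)<\dots<\pi(d)$.

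The technical core is the compatibility of $\sigma$ with this factorisation. Writing $\sigma(w) = (w_1,w_2) \in B_{n/2}\times B_{n/2}$, I claim that
\begin{equation*}
\sigma(u) = \bigl((w_1)^{[d-1]_0},\,(w_2)^{[d-1]_0}\bigr) \qquad\text{and}\qquad \sigma(v) = \bigl((w_1)_{[d-1]_0},\,(w_2)_{[d-1]_0}\bigr).
\end{equation*}
Two ingredients are easy: the entries of $u$ in its first $\imin$ columns are positive and increasing, so $\sigma(u)_i$ is positive and increasing on $\{1,\dots,d\}$, i.e.\ $\sigma(u)_i\in B_{n/2}^{[d-1]_0}$; and, as $\imin=2d$ and $v$ fixes all positions $>\imin$, the order-preserving relabelling in the definition of $\sigma$ makes $\sigma(v)_i$ fix $\{d+1,\dots,n/2\}$, i.e.\ $\sigma(v)_i\in (B_{n/2})_{[d-1]_0}=B_d$. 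Granting that $\sigma(w)=\sigma(u)\sigma(v)$ componentwise, uniqueness of the parabolic factorisation of $w_i$ with respect to $[d-1]_0$ then identifies $\sigma(u)_i$ with $(w_i)^{[d-1]_0}$ and $\sigma(v)_i$ with $(w_i)_{[d-1]_0}$. The multiplicativity $\sigma(w)=\sigma(u)\sigma(v)$ is the one genuinely laborious step, since $\sigma$ is a homomorphism only on $\cbe$ and neither $u$ nor $v$ is a chessboard element in general. I would prove it by using the homomorphism property of $\sigma$ on $\cbe$ to write $w_i = p_i\pi$ with $p_i:=\sigma(w^{[n-1]})_i$ ascending in $B_{n/2}$ (since $w^{[n-1]}$ is ascending) and $\pi=\sigma(w_{[n-1]})_1=\sigma(w_{[n-1]})_2$, and then computing each of $\sigma(u)_i$, $\sigma(v)_i$, $(w_i)^{[d-1]_0}$ and $(w_i)_{[d-1]_0}$ explicitly as a signed permutation from the description ``sort the first $\imin$ (respectively first $d$) absolute values'', using $|w(j)|\equiv j \bmod 2$ and $\pi(1)<\dots<\pi(d)$, and checking the two equalities. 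The bookkeeping of which rows of $u$ and $v$ are selected by $\sigma$ and of the relabelling is the main obstacle; this is where evenness of $n$, evenness of $\imin$, and the even descent type of $w$ all enter.

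Granting the compatibility, the conclusion follows quickly. Applying Lemma~\ref{lem:L}~\eqref{enu:L(2)} to $u$ and to $v$ and substituting the displayed identities,
\begin{align*}
L(u) + L(v) ={}& \coxneg\bigl((w_1)^{[d-1]_0}\bigr) + \coxneg\bigl((w_1)_{[d-1]_0}\bigr) + l(u) + l(v)\\
&{} - l\bigl((w_1)^{[d-1]_0}\bigr) - l\bigl((w_1)_{[d-1]_0}\bigr) - l\bigl((w_2)^{[d-1]_0}\bigr) - l\bigl((w_2)_{[d-1]_0}\bigr).
\end{align*}
By the additivity~\eqref{equ:add.length} of the Coxeter length along parabolic factorisations we have $l(u)+l(v)=l(w)$ and $l((w_i)^{[d-1]_0})+l((w_i)_{[d-1]_0})=l(w_i)$ for $i=1,2$, while $\coxneg((w_1)^{[d-1]_0})+\coxneg((w_1)_{[d-1]_0})=\coxneg(w_1)$ because $(w_1)_{[d-1]_0}$ fixes $\{d+1,\dots,n/2\}$ and $(w_1)^{[d-1]_0}$ is positive on $\{1,\dots,d\}$. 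Therefore $L(u)+L(v)=\coxneg(w_1)+l(w)-l(w_1)-l(w_2)$, which equals $L(w)$ by Lemma~\ref{lem:L}~\eqref{enu:L(2)} once more. Since $u = w^{[\imin-1]_0}$ and $v = w_{[\imin-1]_0}$, this is the asserted identity.
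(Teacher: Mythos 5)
Your overall reduction is sound, and if completed it would give a proof genuinely different from the paper's (the paper proves additivity of the statistics $a$, $b$ and $c$ directly, constructing an explicit parity- and order-preserving bijection $\phi$ between the columns of the blocks $A$ and $B$ of the factorisation, using Lemmas~\ref{lem:ascending-struct} and \ref{lem:evenperm-block}). The steps you do carry out are correct: $u:=w^{[\imin-1]_0}$ is positive and increasing on $[\imin]$, $v:=w_{[\imin-1]_0}$ is ascending, $\sigma(u)_i\in B_{n/2}^{[d-1]_0}$, $\sigma(v)_i\in (B_{n/2})_{[d-1]_0}$, the additivity of $l$ and of $\coxneg$ holds as you say, and granting your two displayed identities the conclusion does follow from Lemma~\ref{lem:L}~\eqref{enu:L(2)} exactly as written.

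The gap is that the compatibility $\sigma(w)_i=\sigma(u)_i\,\sigma(v)_i$ (equivalently the two displayed identifications), which in your approach carries the entire content of the proposition, is asserted but not proved, and the verification you sketch would not go through with the ingredients you list. Since $u$ and $v$ are in general not chessboard elements, $\sigma$ selects from them rows other than the odd/even rows, and matching the rank relabellings amounts to a nontrivial property of the set $R=\{|w(1)|,\dots,|w(\imin)|\}$: for every odd (resp.\ even) $c\in[n]\setminus R$, the number of odd (resp.\ even) elements of $R$ below $c$ must equal the number of elements of $R$ below $c$ of odd (resp.\ even) rank. This is not a consequence of ``$|w(j)|\equiv j\bmod 2$ and $\pi(1)<\dots<\pi(d)$'': the element $w=[-5,2,1,4,3,6]\in\mathcal{C}_{6,0}$ has $D(w)=\{0,2,4\}$ and $w(1)<w(2)$, yet for $\imin=2$ one computes $\sigma(w)_1=[-3,1,2]$ while $\sigma(w^{\{0,1\}})_1\,\sigma(w_{\{0,1\}})_1=[-2,1,3]$, so the identity fails; this $w$ is of course not in $\mathcal{M}_6$ (it has an odd sandwich), which shows that the decisive input is the structure of the ascending part $w^{[n-1]}$ recorded in Lemma~\ref{lem:ascending-struct} --- an ingredient your sketch never invokes. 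With that structure (selected row pairs are either dominoes $\{2s,2s+1\}$ or pairs $\{a,b\}$ whose interior is filled by complete dominoes) the claim is indeed true and can be checked block by block, but carrying this out is essentially the same combinatorial work as the paper's construction of $\phi$; until it is done, the central step of your proof is missing.
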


\begin{proof}
Write the factorisation $w=w^{[\imin-1]_{0}}w_{[\imin-1]_{0}}$
as
\[
w=\left(\begin{array}{cc|ccc}
 & \\
A & \, & \, & M\\
 & \\
\end{array}\right)=\left(\begin{array}{cc|ccc}
 & \\
B & \, & \, & M\\
 & \\
\end{array}\right)\left(\begin{array}{ccc|ccc}
 &  & \\
 & \overline{A} &  &  & 0\\
 &  & \\
\hline  &  & \\
 & 0 &  &  & \Id_{n-\imin}\\
 &  & \\
\end{array}\right)=w^{[\imin-1]_{0}}w_{[\imin-1]_{0}},
\]
where $A\in\Mat(n\times \imin,\Z)$ comprises the first $\imin$ columns
of~$w$, $M\in\Mat(n\times(n-\imin),\Z)$ comprises the last $n-\imin$
columns and $\Id_{n-\imin}$ denotes the identity matrix of
size~$n-\imin$. We now describe the matrices $B$ and $\overline{A}$.
Define
\[
f:[\imin]\longrightarrow[\imin],\quad
\kappa \mapsto \#\{(r,s)\in[n]\times[\imin]\mid w_{rs}\neq0\wedge r\leq
i(\kappa)\}.
\]
Informally speaking, $f$ enumerates the rows in $A$ containing a
non-zero entry, so that for $\kappa\in[\imin]$, the non-zero entry of $w$
in column $\kappa$ lies in the $f(\kappa)$-th non-zero row in
$A$. Since each column of $A$ contains exactly one non-zero entry, the
function $f$ is a bijection. Given this definition, $B$ is the
$n\times i$-matrix whose $(i_w(j),f(j))$-entry is $1$ for
$j\in[\imin]$, and all other entries zero, and $\overline{A}$ is the
$i\times i$ ascending matrix whose $(f(j),j)$-entry
is~$w_{i_w(j),j}$.\par

Recall the formula $L(w)=a(w)+b(w)+2c(w)$ given in
Lemma~\ref{lem:L}~\eqref{enu:L(1)}. Using the assumptions that $n$ and
$D(w)$ are even, we will show that the functions $a$,$b$ and $c$ are
each additive over the
factorisation~$w=w^{[\imin-1]_{0}}w_{[\imin-1]_{0}}$. Clearly
$a(w)=a(A)+a(M)$,
$a(w_{[\imin-1]_{0}})=a(\overline{A})=a(A)$ and
$a(w^{[\imin-1]_{0}})=a(M)$, so $a$ is additive. It is easy to verify
that $b(w_{[\imin-1]_{0}})=b(\overline{A})=b(A)$,
$c(w_{[\imin-1]_{0}})=c(\overline{A})=c(A)$ and that $b(B)=c(B)=0$.
Hence the respective additivity of $b$ and $c$ is equivalent to
\begin{align}
b(w)-b(A)-b(M) & =b(w^{[\imin-1]_{0}})-b(B)-b(M)\label{eq:add-b-c-1}\\
\intertext{and}
c(w)-c(A)-c(M) & =c(w^{[\imin-1]_{0}})-c(B)-c(M).\label{eq:add-b-c-2}
\end{align}
 These two equations can be interpreted in the following way. Let
 $V\in\Mat(n,\Z)$ be a matrix with at most one non-zero entry in each
 column, such as $w$, $w^{[\imin-1]_{0}}$ or $w_{[\imin-1]_{0}}$.  Suppose that
$V$ has the
 form
\[
V = \left( V_1 \mid V_2 \right),
\]
 where $V_{1}\in\Mat(n\times \imin,\Z)$ consists of the first $\imin$
 columns of $V$, and $V_{2}\in\Mat(n\times(n-\imin),\Z)$ consists of
 the remaining $n-\imin$ columns. Then, by Definition~\ref{def:abc},
\begin{align*}
b(V)-b(V_{1})-b(V_{2}) &=
\#\{(j_{1},j_{2})\in[\imin]\times[n-\imin]\mid
i_{V}(j_{1})>i_{V}(j_{2}),\;j_{1}\not\equiv j_{2} \bmod (2)\}
\end{align*} and
\begin{multline*} c(V) -
c(V_1) - c(V_2) \\= \#\{(j_1,j_2) \in[\imin]\times[n-\imin] \mid
V_{i_V(j_2),j_2}=-1,\;i_V(j_1) < i_V(j_2),\; j_1 \not\equiv j_2 \bmod
(2)\}
\end{multline*}
Informally, the value $b(V)-b(V_{1})-b(V_{2})$ is equal to the
contribution to $b$ given by column pairs $(j_{1},j_{2})$ such that
$j_{1}$ denotes a column of $V_{1}$ and $j_{2}$ denotes a column of
$V_{2}$. Similar considerations hold for the function $c$.\par To
prove the equations \eqref{eq:add-b-c-1} and \eqref{eq:add-b-c-2} it
therefore suffices to establish a bijection
$\phi:[\imin]\rightarrow[\imin]$, inducing a bijection between the
columns of $A$ and the columns of $B$ such that
\begin{gather}
j \equiv \phi(j) \bmod{(2)},\label{eq:condition1}\\ i_w(j)>i_w(k)
\Longleftrightarrow i_{B}(\phi(j))>i_w(k) \text{ for all $j\in[\imin]$,
  $\imin<k\leq n$.}\label{eq:condition2}
\end{gather}

We consider $w$ as obtained from the ascending matrix $w^{[n-1]}$ by
column permutations, given by $w_{[n-1]}$. Since both $n$ and $D(w)$
are even, Lemma~\ref{lem:evenperm-block} implies that $w_{[n-1]} =
(w_{[n-1]})_{1}*(w_{[n-1]})_{1}$. This implies that $w$ is obtained
from $w^{[n-1]}$ by permuting pairs of adjacent columns
of~$w^{[n-1]}$, indexed by pairs of the form $(2j-1,2j)$, for
$j\in[n/2]$. Note that $w^{[n-1]}\in\mathcal{C}_{n,0}$. We may
therefore apply Lemma~\ref{lem:ascending-struct} to the column pairs
of $w^{[n-1]}$, and any statement about these column pairs remains
true for the column pairs of the submatrix $A$ of $w$.
Assume that $w^{[n-1]}$ is non-trivial; otherwise, there is nothing to
prove. To define the bijection $\phi$, we consider a pair $(2j-1,2j)$
for $j\in[\imin/2]$. We distinguish two cases:

\medskip
\emph{Case $w(2j)>0$.} Here Lemma~\ref{lem:ascending-struct} implies
that $f(2j) \equiv 0\bmod (2)$ and $f(2j-1)\equiv 1 \bmod (2)$, and in
this case we set
\begin{equation*}
\phi(2j-1) = f(2j-1), \quad
\phi(2j) = f(2j).
\end{equation*}

\emph{Case $w(2j)<0$.} Here Lemma~\ref{lem:ascending-struct} implies that
$w(2j-1) = w(2j)-1$ and thus $f(2j-1) =
  f(2j)+1$. Therefore, if $f(2j-1) \equiv 1 \bmod (2)$ then $f(2j) \equiv 0
\bmod (2)$, and in this case we set
\begin{equation*}
  \phi(2j-1) = f(2j-1), \quad
  \phi(2j) = f(2j).
\end{equation*}
  On the other hand, if $f(2j-1) \not\equiv 1 \bmod (2)$ then $f(2j) \not\equiv
0 \bmod (2)$. In other words, in this case we have $f(2j-1) \equiv 0 \bmod (2)$
and $f(2j) \equiv 1 \bmod (2)$, and
  we set
\begin{equation*}
  \phi(2j-1) = f(2j), \quad
  \phi(2j) = f(2j-1).
\end{equation*}
Note that this last case is the only one where $\phi$ does not agree
with $f$.

By definition the bijection $\phi$ satisfies condition
 \eqref{eq:condition1}. Moreover, in the cases where $\phi(j)=f(j)$
 we have $i_{B}(\phi(j))=i_{B}(f(j))=i_w(j)$, since, as noted previously,
the non-zero entry in column $f(j)$ in the matrix $B$ lies in row $i_w(j)$.
Thus, condition \eqref{eq:condition2} is satisfied whenever $\phi(j)=f(j)$.
Finally, in the case where $(2j-1,2j)$ is a column pair such that
$\phi(2j-1)=f(2j)$ and $\phi(2j)=f(2j-1)$, we have $f(2j-1)=f(2j)+1$, so
$$i_{B}(\phi(2j))+1=i_{B}
(f(2j-1))+1=i_w(2j-1)+1=i_w(2j)=i_B(f(2j))=i_B(\phi(2j-1)).$$
Thus, for $k$ such that $\imin<k\leq n$, we have
\begin{align*}
i_w(2j-1)>i_w(k) &\Longleftrightarrow i_w(2j)>i_w(k)\\
\Longleftrightarrow i_B(\phi(2j-1))>i_w(k)&\Longleftrightarrow i_B(\phi(2j))>i_w(k).
\end{align*}
Therefore condition \eqref{eq:condition2} is satisfied also in this case.

We have thus established the existence of a bijection $\phi$ with
the required properties, and this finishes the proof.
\end{proof}

\subsection{Proof of Case \eqref{3} of Theorem~\ref{thm:Main}}\label{subsec:case3}
\foreignlanguage{english}{For $a,b\in\N_{0}$ such that $a\geq b$, the
  \emph{$X$-binomial coefficient} is defined as
\[
\binom{a}{b}_{X}=\frac{(\underline{a})!}{(\ul{a-b})!(\underline{b})!}\in\Z[X].
\]
More generally, for $n\in\N$ and
$I=\{i_{1},\dots,i_{l}\}_{<}\subseteq[n-1]_{0}$, the
\emph{$X$-multinomial coefficient} is
\[
\binom{n}{I}_{X}=\binom{n}{i_{l}}_{X}\binom{i_{l}}{i_{l-1}}_{X}\cdots\binom{i_{2}}{i_{1}}_{X}\in\Z[X].
\]}
It is well known that
\begin{equation}\label{equ:stanley}
 \sum_{w\in S_n^{I^{\mathrm{c}}}}X^{l(w)} = \binom{n}{I}_X ;
\end{equation}
see, for instance, \cite[Proposition~1.3.17]{Stanley/97}.

\begin{lem}\label{lem:Evenperm}Suppose that $n$ and $I\subseteq[n-1]_{0}$ are
  even. Then
\[
\sum_{w\in S_{n}^{I^{\mathrm{c}}}}(-1)^{l(w)}X^{L(w)}=\binom{n/2}{I/2}_{X^{2}}.
\]
 \end{lem}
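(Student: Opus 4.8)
The plan is to reduce the left‑hand side first to even chessboard permutation matrices, by a sign‑reversing involution, and then -- using Lemma~\ref{lem:evenperm-block} -- to collapse the resulting sum to a sum over $S_{n/2}$, on which the classical identity \eqref{equ:stanley} applies.

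First I would show that
\[
\sum_{w\in S_{n}^{I^{\mathrm{c}}}}(-1)^{l(w)}X^{L(w)}=\sum_{w\in\cbe\cap S_{n}^{I^{\mathrm{c}}}}(-1)^{l(w)}X^{L(w)}.
\]
The key observation is that the sign‑reversing involution $*$ from the proof of Lemma~\ref{lem:cb-supp} restricts to $S_{n}^{I^{\mathrm{c}}}\setminus\cbe$. Indeed, let $w\in S_{n}^{I^{\mathrm{c}}}$ with $w\notin\cbe$. Since $I$ is even, $D(w)\subseteq I$ is even, so by Lemma~\ref{lem:evenperm-block} we cannot have $w\in\cbo$ (that would force $w\in\cbe\cap\cbo=\varnothing$); hence $w\notin\cb$, and more generally $\cbo\cap S_{n}^{I^{\mathrm{c}}}=\varnothing$, so $S_{n}^{I^{\mathrm{c}}}\setminus\cbe=S_{n}^{I^{\mathrm{c}}}\setminus\cb$. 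Thus there is a smallest $i\in[n-1]$ with $j_{w}(i)\equiv j_{w}(i+1)\bmod(2)$; as $i\geq1$, the element $w^{*}:=s_{i}w$ again lies in $S_{n}$, and $|j_{w}(i)-j_{w}(i+1)|\geq2$. Exactly as in the proof of Lemma~\ref{lem:cb-supp} (using Lemma~\ref{lem:L}~\eqref{enu:L(1)}) one checks that $L(w^{*})=L(w)$, $D(w^{*})=D(w)$, $l(w^{*})\not\equiv l(w)\bmod(2)$, and that $*$ is an involution on $S_{n}^{I^{\mathrm{c}}}\setminus\cbe$; the contributions of these elements therefore cancel in pairs.

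Next I would collapse the remaining sum by means of the map $\sigma$. For $w\in\cbe\cap S_{n}^{I^{\mathrm{c}}}$ the descent set $D(w)\subseteq I$ is even, so Lemma~\ref{lem:evenperm-block} gives $\sigma(w)=(w_{1},w_{1})$ with $w_{1}\in S_{n/2}^{(I/2)^{\mathrm{c}}}$ and $l_{S_{n}}(w)=4\,l_{S_{n/2}}(w_{1})$. Conversely, $u\mapsto u*u$ sends $S_{n/2}^{(I/2)^{\mathrm{c}}}$ into $\cbe\cap S_{n}^{I^{\mathrm{c}}}$ -- the matrix $u*u$ is a permutation matrix with even descent set $\{2a\mid a\in D(u)\}\subseteq I$ -- and the two maps are mutually inverse. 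Since $w\in S_{n}$ has $\coxneg(w)=\coxnsp(w)=0$, we have $l_{B_{n}}(w)=\coxinv(w)=l_{S_{n}}(w)$ and likewise for $w_{1}$, while $\coxneg(w_{1})=0$. Hence Lemma~\ref{lem:L}~\eqref{enu:L(2)} yields
\[
L(w)=l_{S_{n}}(w)-2\,l_{S_{n/2}}(w_{1})=2\,l_{S_{n/2}}(w_{1}),\qquad (-1)^{l(w)}=(-1)^{4l_{S_{n/2}}(w_{1})}=1,
\]
and therefore
\[
\sum_{w\in\cbe\cap S_{n}^{I^{\mathrm{c}}}}(-1)^{l(w)}X^{L(w)}=\sum_{u\in S_{n/2}^{(I/2)^{\mathrm{c}}}}\bigl(X^{2}\bigr)^{l_{S_{n/2}}(u)}=\binom{n/2}{I/2}_{X^{2}},
\]
the last step being \eqref{equ:stanley} with $n$ replaced by $n/2$, $I$ by $I/2$, and $X$ by $X^{2}$.

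The step I expect to be the main obstacle is the first one: one must make sure that the chessboard‑reduction involution of Lemma~\ref{lem:cb-supp} genuinely stays inside $S_{n}$ (in particular never invokes the generator $s_{0}$, which would change a sign) and inside the quotient $S_{n}^{I^{\mathrm{c}}}$. Both facts hinge on the evenness of $I$: it is precisely this hypothesis that, through Lemma~\ref{lem:evenperm-block}, excludes the odd chessboard elements from $S_{n}^{I^{\mathrm{c}}}$ and thereby forces the minimal ``bad'' index to lie in $[n-1]$ rather than be $0$. Once this is in place, the rest is the essentially mechanical bookkeeping of Step~2.
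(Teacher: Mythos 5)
Your argument is correct and follows essentially the same route as the paper's proof: reduce to $\cbe\cap S_{n}^{I^{\mathrm{c}}}$ by the sign-reversing involution of Lemma~\ref{lem:cb-supp}, then collapse the remaining sum via Lemma~\ref{lem:evenperm-block}, Lemma~\ref{lem:L}~\eqref{enu:L(2)} and \eqref{equ:stanley}. One small point of justification: to rule out the index $i=0$ (i.e.\ any use of $s_{0}$) you should not rely on Lemma~\ref{lem:evenperm-block} as stated, since it only concerns chessboard elements and excluding $\cbo$ does not by itself prevent a non-chessboard $w\in S_{n}^{I^{\mathrm{c}}}$ from having $j_{w}(1)$ even; instead argue directly (as in the first step of that lemma's proof) that if $j_{w}(1)=w^{-1}(1)$ were even then $w$ would have a descent at the odd position $w^{-1}(1)-1$, contradicting the evenness of $D(w)$.
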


 \begin{proof} By arguing as in the proof of Lemma~\ref{lem:cb-supp},
   we may argue that the sum is supported on the set
   $S_n^{I^{\mathrm{c}}}\cap \mathcal{C}_{n,0}$. Let $w\in
   S_n^{I^{\mathrm{c}}}\cap\cbe$ and write $w=w_{1}*w_{2}$.  By
   Lemma~\ref{lem:evenperm-block} we have $w_{1}=w_{2}\in
   S_{n/2}^{(I/2)^{\mathrm{c}}}$
   and~$l_{S_n}(w)=4l_{S_{n/2}}(w_{1})$. By Lemma~\ref{lem:L}
   \eqref{enu:L(2)} we have $L(w)=l(w)-2l(w_{1})$. (Here and in the
   sequel we suppress subscripts in the notation for various Coxeter
   length functions.) Using~\eqref{equ:stanley} we obtain
  \begin{align*}
    \sum_{w\in S_{n}^{I^{\mathrm{c}}}}(-1)^{l(w)}X^{L(w)} &=
    \sum_{w\in
      S_{n}^{I^{\mathrm{c}}}\cap\cbe}(-1)^{l(w)}X^{l(w)-2l(w_{1})}\\
    &=\sum_{w_{1}\in
      S_{n/2}^{(I/2)^{\mathrm{c}}}}(-1)^{4l(w_{1})}X^{4l(w_{1})-2l(w_{1})}
    \\&
=\sum_{w_{1}\in
      S_{n/2}^{(I/2)^{\mathrm{c}}}}X^{2l(w_{1})}=\binom{n/2}{I/2}_{X^{2}}.
\end{align*}
 \end{proof}

\begin{prop} \label{pro:Even}
 Conjecture~\ref{conjecture} holds when both $n$ and
 $I\subseteq[n-1]_{0}$ are even, that is, in this case
\[
\sum_{w\in
  B_{n}^{I^{\mathrm{c}}}}(-1)^{l(w)}X^{L(w)}=\binom{n/2}{I/2}_{X^{2}}\frac{(\underline{1})(\underline{3})\cdots(\underline{n-1})}{(\underline{1})(\underline{3})\cdots(\underline{i_{1}-1})}.
\]
\end{prop}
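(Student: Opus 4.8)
The plan is to reduce the sum over $B_n^{I^{\mathrm c}}$ to one over the supporting set $\firste^{I^{\mathrm c}}$ and then to split the resulting generating function by means of parabolic factorisations, using the two additivity results for $L$. Since $\cbe$ is a subgroup of $B_n$ and the factorisation~\eqref{equ:factorisation} is unique, the assignment $w\mapsto(w^{[n-1]},w_{[n-1]})$ is a bijection from $\firste$ onto $\cbe^{[n-1]}\times(S_n\cap\cbe)$; under it $l(w)=l(w^{[n-1]})+l(w_{[n-1]})$ by~\eqref{equ:add.length}, $L(w)=L(w^{[n-1]})+L(w_{[n-1]})$ by Proposition~\ref{prop:first}, and $D(w)\setminus\{0\}=D(w_{[n-1]})$.

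Consider first the case $0\in I$, so $i_1=0$ and the asserted denominator is the empty product. By Corollary~\ref{cor:Monochrome-supp} the left-hand side equals $\sum_{w\in\firste^{I^{\mathrm c}}}(-1)^{l(w)}X^{L(w)}$. As $0\in I$, the condition $D(w)\subseteq I$ on $w\in\firste$ amounts to $D(w_{[n-1]})\subseteq I$, with no constraint on the $0$-th descent, so the bijection above identifies $\firste^{I^{\mathrm c}}$ with $\cbe^{[n-1]}\times\{v\in S_n\cap\cbe:D(v)\subseteq I\}$ and, by additivity of $l$ and $L$,
\[
\sum_{w\in\firste^{I^{\mathrm c}}}(-1)^{l(w)}X^{L(w)}=\Bigl(\sum_{w\in\cbe^{[n-1]}}(-1)^{l(w)}X^{L(w)}\Bigr)\Bigl(\sum_{v\in S_n\cap\cbe,\ D(v)\subseteq I}(-1)^{l(v)}X^{L(v)}\Bigr).
\]
The first factor equals $(\underline1)(\underline3)\cdots(\underline{n-1})$ by Proposition~\ref{pro:Ascending} and Lemma~\ref{lem:cb-supp} (using $\tilde n=n-1$), and the second equals $\binom{n/2}{I/2}_{X^2}$ by Lemma~\ref{lem:Evenperm}, whose proof shows the sum over $S_n^{I^{\mathrm c}}$ is supported on chessboard elements. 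This gives the formula when $0\in I$.

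Now let $0\notin I$; we may assume $I\neq\varnothing$, so that $i_1=\min I$ is even with $2\le i_1\le n-1$. Writing $I_0=I\cup\{0\}$, the case already treated gives $\sum_{w\in B_n^{I_0^{\mathrm c}}}(-1)^{l(w)}X^{L(w)}=(\underline1)(\underline3)\cdots(\underline{n-1})\binom{n/2}{I/2}_{X^2}$, which by Corollary~\ref{cor:Monochrome-supp} equals $\sum_{w\in\firste^{I_0^{\mathrm c}}}(-1)^{l(w)}X^{L(w)}$. Every $w\in\firste^{I_0^{\mathrm c}}$ has even descent type $D(w)\subseteq I_0$ and satisfies $w(1)<\cdots<w(i_1)$, so Proposition~\ref{prop:even second} applies with $e=i_1$ and yields $L(w)=L(w^{[i_1-1]_0})+L(w_{[i_1-1]_0})$, while $l(w)=l(w^{[i_1-1]_0})+l(w_{[i_1-1]_0})$ by~\eqref{equ:add.length}; moreover $D(w)\cap[i_1-1]_0=D(w_{[i_1-1]_0})\subseteq\{0\}$, so $w_{[i_1-1]_0}$ is an ascending element of $B_{i_1}$. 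The plan is to exploit these additivities to factor $\sum_{w\in\firste^{I_0^{\mathrm c}}}(-1)^{l(w)}X^{L(w)}$ as the product of $\sum_{w\in B_n^{I^{\mathrm c}}}(-1)^{l(w)}X^{L(w)}$ with the generating function of the ascending even chessboard elements of $B_{i_1}$; the latter equals $(\underline1)(\underline3)\cdots(\underline{i_1-1})$ by Proposition~\ref{pro:Ascending} applied to $B_{i_1}$, and dividing gives
\[
\sum_{w\in B_n^{I^{\mathrm c}}}(-1)^{l(w)}X^{L(w)}=\binom{n/2}{I/2}_{X^2}\,\frac{(\underline1)(\underline3)\cdots(\underline{n-1})}{(\underline1)(\underline3)\cdots(\underline{i_1-1})}.
\]

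The main obstacle is the descent bookkeeping behind this last factorisation. In a general Coxeter group the parabolic factorisation $w=w^{[i_1-1]_0}w_{[i_1-1]_0}$ need not split the descents of $w$ in $\{i_1,\dots,n-1\}$ between the two factors, so one has to use the even chessboard structure — the block description of $w_{[n-1]}$ from Lemma~\ref{lem:evenperm-block} together with the column-pair description of ascending even chessboard elements from Lemma~\ref{lem:ascending-struct} — to verify that, as $w$ ranges over $\firste^{I_0^{\mathrm c}}$, the part $w^{[i_1-1]_0}$ ranges over exactly a supporting set inside $B_n^{I^{\mathrm c}}$ carrying the full sum, and $w_{[i_1-1]_0}$ over exactly the ascending even chessboard elements of $B_{i_1}$. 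A smaller related subtlety, arising already in the reduction to the supporting set when $0\notin I$, is that the involution $\vee$ of Definition~\ref{def:w.vee} preserves $D(w)$ only away from $0$ and may change the $0$-th descent at a degenerate topmost odd sandwich; carrying out the reduction inside $\firste^{I_0^{\mathrm c}}$, where $0\in I_0$, avoids this difficulty. Once these compatibilities are in place, what remains is routine manipulation of the one-variable generating functions produced above.
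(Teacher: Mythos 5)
Your first half is fine and is essentially the paper's own first step: reducing to $\firste^{(I_0)^{\mathrm{c}}}$ via Corollary~\ref{cor:Monochrome-supp} and factoring through $w\mapsto(w^{[n-1]},w_{[n-1]})$, using Proposition~\ref{prop:first}, Proposition~\ref{pro:Ascending} and Lemma~\ref{lem:Evenperm}, does give $\sum_{w\in B_n^{(I_0)^{\mathrm{c}}}}(-1)^{l(w)}X^{L(w)}=(\underline{1})(\underline{3})\cdots(\underline{n-1})\binom{n/2}{I/2}_{X^2}$, which settles the case $0\in I$. The problem is the case $0\notin I$, where the whole point of the statement (the denominator $(\underline{1})(\underline{3})\cdots(\underline{i_1-1})$) lives. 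The paper's proof obtains it from a second factorisation, $\sum_{w\in B_n^{(I_0)^{\mathrm{c}}}}(-1)^{l(w)}X^{L(w)}=\bigl(\sum_{w\in B_n^{I^{\mathrm{c}}}}(-1)^{l(w)}X^{L(w)}\bigr)\bigl(\sum_{w\in B_{i_1}^{[i_1-1]}}(-1)^{l(w)}X^{L(w)}\bigr)$, based on Proposition~\ref{prop:even second}. You set this up correctly (additivity of $l$ and $L$ over $w=w^{[i_1-1]_0}w_{[i_1-1]_0}$ does apply on $\firste^{(I_0)^{\mathrm{c}}}$, and $w_{[i_1-1]_0}$ is ascending), but you then explicitly stop at "the main obstacle'': showing that the sum over the pairs $(w^{[i_1-1]_0},w_{[i_1-1]_0})$, as $w$ ranges over $\firste^{(I_0)^{\mathrm{c}}}$, splits as the product of the two full descent-class sums. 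That is not a routine compatibility check to be deferred; it is the substance of the deduction, and you do not carry it out.

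Worse, the way you propose to carry it out is not correct as stated. Take $n=4$, $I=\{2\}$, so $I_0=\{0,2\}$, $i_1=2$, and $w=[-3,-2,1,4]$. Then $w$ is ascending, lies in $\mathcal{C}_{4,0}$, has $D(w)=\{0\}\subseteq I_0$, hence $w\in\mathcal{E}_4^{(I_0)^{\mathrm{c}}}$; its parabolic parts at $J=[1]_0$ are $w^{[1]_0}=[2,3,1,4]$ and $w_{[1]_0}=[-2,-1]$. The second part is an ascending \emph{odd} chessboard element of $B_2$ (it lies in $\mathcal{C}_{2,1}$), so $w_{[i_1-1]_0}$ does not range over "exactly the ascending even chessboard elements of $B_{i_1}$''; and $w^{[1]_0}$ is not a chessboard element at all, so the first components do not land in any of the supporting sets ($\mathcal{C}_{n,0}$, $\firstcb$, $\firste$) constructed in the paper. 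Note also that the paper's cancellation results (Lemma~\ref{lem:cb-supp} aside) only produce supporting sets for descent classes whose index set contains $0$; asserting that some set "inside $B_n^{I^{\mathrm{c}}}$ carries the full sum'' for $0\notin I$ would itself require a new sign-reversing involution preserving the descent class exactly, which you neither construct nor reduce to the paper's lemmas. So the proposal proves the $0\in I$ case but leaves the key product identity -- the one the paper extracts from Proposition~\ref{prop:even second} -- unproved, and the indicated route to it fails; this is a genuine gap. (The aside dismissing $I=\varnothing$ is harmless, since that case is trivial on both sides.)
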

\begin{proof} By Corollary~\ref{cor:Monochrome-supp}, we have
\[
\sum_{w\in
  B_{n}^{(I_{0})^{\mathrm{c}}}}(-1)^{l(w)}X^{L(w)}=\sum_{w\in\firste^{(I_{0})^{
      \mathrm{c}}}}(-1)^{l(w)}X^{L(w)}.
\]
Recall that $i_1 = \min(I\cup\{n\})$. By the two additivity results
for $L$ established in Propositions~\ref{prop:first} and
\ref{prop:even second}, this may be written as
\begin{align*}
\sum_{w\in B_{n}^{(I_{0})^{\mathrm{c}}}}(-1)^{l(w)}X^{L(w)} & =\left(\sum_{w\in
B_{n}^{[n-1]}}(-1)^{l(w)}X^{L(w)}\right)\left(\sum_{w\in
S_{n}^{I^{\mathrm{c}}}}(-1)^{l(w)}X^{L(w)}\right)\\
 & =\left(\sum_{w\in B_{n}^{I^{\mathrm{c}}}}(-1)^{l(w)}X^{L(w)}\right)\left(\sum_{w\in B_{i_{1}}^{[i_{1}-1]}}(-1)^{l(w)}X^{L(w)}\right).
\end{align*}
The proposition now follows from Proposition~\ref{pro:Ascending} (twice) and
Lemma~\ref{lem:Evenperm}.
\end{proof}
 \foreignlanguage{english}{\begin{acknowledgement*} This research was
     supported by EPSRC grant EP/F044194/1. \end{acknowledgement*} }

\def\cprime{$'$}
\providecommand{\bysame}{\leavevmode\hbox to3em{\hrulefill}\thinspace}
\providecommand{\MR}{\relax\ifhmode\unskip\space\fi MR }
\providecommand{\MRhref}[2]{%
  \href{http://www.ams.org/mathscinet-getitem?mr=#1}{#2}
}
\providecommand{\href}[2]{#2}

\end{document}